\theoremstyle{definition}
\newtheorem{Thm}{Theorem}[section]
\newtheorem{Def}[Thm]{Definition}
\newtheorem{Lem}[Thm]{Lemma}
\newtheorem{Cor}[Thm]{Corollary}
\newtheorem{Ex}[Thm]{Example}
\newtheorem{Prop}[Thm]{Proposition}
\newtheorem{Rem}[Thm]{Remark}
\newtheorem{Quest}[Thm]{Question}
\def\Homeo{\mathrm{Homeo}}
\def\SL{\mathrm{SL}}
\def\Z{\mathbb{Z}}
\def\N{\mathbb{N}}
\def\H{\mathfrak{H}}
\def\K{\mathfrak{K}}
\def\Out{\mathrm{Out}}
\def\bbk{\mathbb{K}}
\def\calk{\mathcal{K}}
\def\cale{\mathcal{E}}
\title{Blowing up and down compacta with geometrically finite convergence actions of a group}
\date{}
\author{
Yoshifumi Matsuda\footnote
{Graduate School of Mathematical Sciences, 
University of Tokyo, 
3-8-1 Komaba, 
Meguro-ku, 
Tokyo, 
153-8914 Japan,
ymatsuda@ms.u-tokyo.ac.jp} \footnote{
The author is partially supported by Grant-in-Aid for Scientific Researches for Young Scientists (B) 
(No. 22740034 and No. 25800036), Japan Society of Promotion of Science.}, 
Shin-ichi Oguni\footnote
{Department of Mathematics, Faculty of Science, Ehime University,
2-5 Bunkyo-cho, 
Matsuyama, 
Ehime, 
790-8577 Japan,
oguni@math.sci.ehime-u.ac.jp} \footnote{
The author is partially supported by Grant-in-Aid for Scientific Researches for Young Scientists (B) 
(No. 24740045), Japan Society of Promotion of Science.}, 
Saeko Yamagata\footnote
{Faculty of Education and Human Sciences, Yokohama National University,
240-8501 Yokohama, Japan,
yamagata@ynu.ac.jp}
}
\begin{document}

\maketitle
\begin{abstract}     
We consider two compacta with minimal non-elementary convergence actions of a countable group. 
When there exists an equivariant continuous map 
from one to the other, we call the first a blow-up of the second 
and the second a blow-down of the first. 
When both actions are geometrically finite, 
it is shown that one is a blow-up of the other if and only if 
each parabolic subgroup with respect to the first is parabolic with respect to the second.  
As an application, 
for each compactum with a geometrically finite convergence action, 
we construct its blow-downs with convergence actions which are not geometrically finite.\\

\noindent
Keywords:
geometrically finite convergence actions; 
relatively hyperbolic groups; 
convergence actions; 
augmented spaces \\

\noindent
2010MSC:
20F67;
20F65
\end{abstract}

\section{Introduction}
The notion of geometrically finite convergence groups 
is a generalized notion of geometrically finite Kleinian groups 
and is deeply related to the notion of 
relatively hyperbolic groups (see \cite[Definition 1]{Bow12} and \cite[Theorem 0.1]{Yam04}). 
The study of convergence groups was initiated in \cite{G-M87}. 
Also relatively hyperbolic groups have been studied since they were introduced 
in \cite{Gro87} (see for example \cite{Bow12}, \cite{D-S05}, \cite{Far98} and \cite{Osi06a}). 
In this paper we study blow-ups and blow-downs of compact metrizable spaces endowed with
geometrically finite convergence actions of a group
by using relative hyperbolicity of the group.
We note that the actions on such blow-ups and blow-downs are not necessarily 
geometrically finite. 

Let $G$ be a countable group, which has compact metrizable spaces 
$X$ and $Y$ endowed with minimal non-elementary convergence actions of $G$.
Throughout this paper, every countable group is endowed with the discrete topology.
We denote the set of maximal parabolic subgroups 
with respect to $X$ by $\H(X)$ and call $\H(X)$ the {\it peripheral structure} of $X$.
When the action on $X$ is geometrically finite, 
$\H(X)$ is called a {\it proper relatively hyperbolic structure} on $G$ 
(\cite[Definition 2.1]{M-O-Y3}). 
If $X$ and $Y$ are $G$-equivariant homeomorphic, then we have $\H(X)=\H(Y)$. 
Also if we have a $G$-equivariant continuous map from $X$ to $Y$, 
then we have $\H(X)\to \H(Y)$ (Lemma \ref{equivcont}(5)). 
Here $\H(X)\to \H(Y)$ means that 
each maximal parabolic subgroup with respect to $X$ 
is a parabolic subgroup with respect to $Y$ 
(see also the paragraph just before Lemma \ref{order}).
When there exists a $G$-equivariant continuous map from $X$ to $Y$, 
we call $X$ a {\it blow-up} of $Y$ and $Y$ a {\it blow-down} of $X$.

Suppose that the actions on $X$ and $Y$ are geometrically finite. 
If $\H(X)=\H(Y)$, then $X$ and $Y$ are $G$-equivariant homeomorphic
(see \cite[Theorem 0.1]{Yam04} and also \cite[Theorem 5.2]{Hru10}). 
Now we consider $\H(X)\to \H(Y)$ instead of $\H(X)=\H(Y)$.
The following is our main theorem
(compare with the second question in \cite[Introduction]{Mj09}): 
\begin{Thm}\label{geomorder}
Let $G$ be a countable group
and $X$ and $Y$ be compact metrizable spaces endowed 
with geometrically finite convergence actions of $G$.
If $\H(X)\to \H(Y)$, then we have a $G$-equivariant continuous map $\pi:X\to Y$. 
\end{Thm}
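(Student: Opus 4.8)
The plan is to build the map $\pi:X\to Y$ by passing through the canonical geometric models attached to geometrically finite convergence actions. Since the action on $X$ is geometrically finite, $\H(X)$ is a proper relatively hyperbolic structure on $G$, so $X$ is equivariantly homeomorphic to the Bowditch boundary $\partial(G,\H(X))$; likewise $Y\cong\partial(G,\H(Y))$. Thus it suffices to produce an equivariant continuous surjection $\partial(G,\H(X))\to\partial(G,\H(Y))$ under the hypothesis $\H(X)\to\H(Y)$. The key point that makes this plausible is that $\H(X)\to\H(Y)$ says every maximal parabolic subgroup $P\in\H(X)$ is parabolic with respect to $Y$, hence is contained in some maximal parabolic $Q\in\H(Y)$; so the peripheral structure of $Y$ is, roughly, obtained from that of $X$ by \emph{coarsening} (collapsing and enlarging peripheral subgroups), and the boundary should collapse correspondingly.

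Concretely, I would realize both boundaries via augmented spaces (cusped spaces) in the sense used for relatively hyperbolic groups. Fix a finite generating set and form the augmented space $\cale(G,\H(X))$ by gluing combinatorial horoballs onto the cosets of subgroups in $\H(X)$; this is a proper hyperbolic space whose Gromov boundary is $\partial(G,\H(X))\cong X$, and similarly for $Y$ with $\cale(G,\H(Y))$. Because each $P\in\H(X)$ sits inside some $Q\in\H(Y)$, there is a natural coarsely Lipschitz, coarsely equivariant map $\cale(G,\H(X))\to\cale(G,\H(Y))$: it is the identity on the Cayley graph of $G$ and sends the horoball over a $P$-coset into the horoball over the containing $Q$-coset. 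The heart of the argument is to check that this map is a quasi-isometric embedding on each horoball in a way compatible with depth, so that it sends horoball-rays to horoball-rays and geodesics to quasigeodesics, and therefore extends continuously to a map of Gromov boundaries.

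The main obstacle I expect is exactly this boundary-extension step: a coarsely Lipschitz map between hyperbolic spaces does not in general extend to the boundary, so I must verify a quasi-convexity/contraction property ensuring that the images of geodesics stay uniformly quasigeodesic (or at least that pairs of sequences converging to the same boundary point have images converging to the same point). The delicate case is a parabolic point: a point of $X$ fixed by $P\in\H(X)$ must be sent to the point of $Y$ fixed by the containing $Q\in\H(Y)$, and I must confirm that the collapsing of the $P$-horoball into the (larger) $Q$-horoball is continuous there and does not tear apart distinct conical limit directions. I would handle conical limit points and parabolic points separately, using that $\partial(G,\H(X))$ decomposes into conical limit points and (countably many) bounded parabolic points, and that the map is manifestly equivariant and injective on conical directions while collapsing each $P$-orbit consistently onto the $Q$-fixed point.

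Finally, I would assemble these pieces: define $\pi$ on the dense orbit data equivariantly, extend by continuity using the uniform estimates above, and check that the resulting map is well defined (independent of approximating sequences), continuous, equivariant, and surjective. Equivariance and surjectivity follow from the orbit structure together with minimality of the action on $Y$, so the substantive content is continuity, which the augmented-space/quasi-isometry framework is designed to deliver. Throughout I would lean on the basic functorial properties of $\H(\,\cdot\,)$ recorded in Lemma \ref{equivcont} and the ordering facts around Lemma \ref{order}.
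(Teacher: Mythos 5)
Your overall setup---realizing $X$ and $Y$ as boundaries of augmented spaces built from $\H(X)$ and $\H(Y)$, taking the identity on $G$, and trying to extend it continuously to the boundary---is exactly the paper's framework. But the mechanism you propose for the crucial continuity step is one that cannot work. You claim the natural map $X(G,{\mathbb K},d_G)\to X(G,{\mathbb H},d_G)$ should be checked to be ``a quasi-isometric embedding on each horoball in a way compatible with depth, so that it sends \dots geodesics to quasigeodesics.'' If the map sent geodesics to quasigeodesics (equivalently, were a quasi-isometric embedding between these proper hyperbolic spaces), the induced boundary map would be a topological embedding, hence $\pi:X\to Y$ would be injective. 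It is not: by Lemma \ref{equivcont} (4), the preimage of a bounded parabolic point $q\in Y$ with maximal parabolic subgroup $H$ is the whole limit set $\Lambda(H,X)$, which is typically infinite (this collapsing is the entire point of the theorem). Concretely, two points lying deep in two different $K$-horoballs that sit inside the same $H$-coset can be far apart in $X(G,{\mathbb K},d_G)$ while their images are close inside the single $H$-horoball of $X(G,{\mathbb H},d_G)$, so the lower quasi-isometry bound fails badly. Your parenthetical fallback---``pairs of sequences converging to the same boundary point have images converging to the same point''---is the correct statement to aim for, but you give no argument for it; that verification is the actual content of the proof.

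What the paper does instead is run Mitra's Cannon--Thurston criterion (Lemma \ref{c}): it suffices to show that for every $N$ there is $M$ such that if a geodesic in $X(G,{\mathbb K},d_G)$ between two group elements avoids the $M$-ball around $e$, then every geodesic in $X(G,{\mathbb H},d_G)$ between the same endpoints avoids the $N$-ball. This is verified by comparing a geodesic in $\overline{\Gamma}(G,{\mathbb H},S)$ with its minimal lift to $\overline{\Gamma}(G,{\mathbb K},S)$ (Lemma \ref{minilift}) and the typical lifts of both into the respective augmented spaces (Lemma \ref{d}); the group points of the ${\mathbb H}$-lift are among those of the ${\mathbb K}$-lift, a coarse comparison of proper left-invariant metrics (Lemma \ref{a'}) pushes ``far from $e$'' from one space to the other on group points, and the horoball-penetration estimate of Lemma \ref{e} controls the excursions of the ${\mathbb H}$-quasigeodesic inside combinatorial horoballs between consecutive group points. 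None of these quantitative ingredients appears in your outline, so as it stands the proposal identifies the right target but does not contain a proof of the one step that matters.
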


\begin{Rem}\label{Folyd'}
For the case where both actions are geometrically finite Kleinian group actions 
which may be higher dimensional, 
Theorem \ref{geomorder} is known by \cite[Corollary of Theorem 1]{Tuk88}. 
The argument is based on existence of the so-called Floyd maps by \cite{Flo80} and \cite[Theorem 1]{Tuk88}.  
By the same way, 
it is possible to give a proof of Theorem \ref{geomorder}
for the case where $G$ is finitely generated. 
Indeed existence of Floyd maps (\cite[Corollary in Section 1.5]{Ger10}) 
and \cite[Theorem A]{G-P09} (or Lemma \ref{equivcont} (4))
imply Theorem \ref{geomorder} for the case where $G$ is finitely generated
(see also \cite[Lemmas 4.13 and 4.14]{Yan11}). 
We remark that geometrically finite convergence actions in this paper 
(see Section 2) and those used in \cite{Ger10} and \cite{G-P09} 
are equivalent when we consider countable groups (see \cite[Section 3]{Ger09}). 

Our approach to Theorem \ref{geomorder} is different from the above
and admits the case of countable groups which are not necessarily finitely generated. 
We note that any geometrically finite Kleinian group is finitely generated, 
but a countable group admitting a geometrically finite convergence action
is not necessarily finitely generated.
\end{Rem}

\begin{Rem}\label{Folyd''}
After earlier versions of this paper were circulated, Gerasimov and Potyagailo 
generalized Theorem \ref{geomorder} for the case where 
$G$ is not necessarily countable and $X, Y$ are not necessarily
metrizable (\cite[Theorem C]{G-P13}).  
\end{Rem}

\begin{Rem}\label{geomorder'}
We consider $\pi:X\to Y$ in Theorem \ref{geomorder}. 
General properties of equivariant continuous maps between 
minimal non-elementary convergence actions 
imply the following (see Lemma \ref{equivcont}): 
\begin{itemize}
\item $\pi$ is surjective;
\item $\pi$ is a unique $G$-equivariant continuous map from $X$ to $Y$;  
\item if $q\in Y$ is a bounded parabolic point and $H$ is its maximal parabolic subgroup, 
then $\pi^{-1}(q)$ is equal to the limit set $\Lambda(H,X)$ of the induced action of $H$ on $X$; 
\item $id_G\cup \pi:G\cup X\to G\cup Y$ is continuous.
\end{itemize}
Here $G\cup X$ and $G\cup Y$ are natural compactifications of $G$ by $X$ and $Y$
(see \cite[Subsection 2.4]{Ger09}, \cite[Proposition 8.3.1]{Ger10} and Lemma \ref{b'}). 
\end{Rem}

By combining some facts with Theorem \ref{geomorder}, 
we construct two families of 
uncountably infinitely many compact metrizable spaces endowed 
with minimal non-elementary convergence actions of $G$ 
which are not geometrically finite as blow-downs. 
The first family consists of uncountably infinitely many blow-downs 
with the same peripheral structure: 
\begin{Cor}\label{CT}
Let $G$ be a countable group and 
let $X$ be a compact metrizable space endowed with a geometrically finite convergence action of $G$. 
Then there exists a family $\{X_\lambda\}_{\lambda\in \{0,1\}^\N}$ of 
compact metrizable spaces endowed with minimal non-elementary convergence actions of $G$ 
which satisfies the following:
\begin{enumerate}
\item[(1)] for every $\lambda\in\{0,1\}^\N$, $\H(X)=\H(X_\lambda)$;
\item[(2)] for every $\lambda\in\{0,1\}^\N$, $X_\lambda$ is a blow-down of $X$;
\item[(3)] for any $\lambda, \lambda'\in\{0,1\}^\N$, 
$X_\lambda$ is a blow-down of $X_{\lambda'}$ if and only if 
$\lambda^{-1}(\{1\})\supset \lambda'^{-1}(\{1\})$;
\item[(4)] for any $\lambda, \lambda'\in\{0,1\}^\N$ such that $\lambda\neq\lambda'$, 
$X_\lambda$ is not $G$-equivariant homeomorphic to $X_{\lambda'}$; 
\item[(5)] for every $\lambda\in\{0,1\}^\N$, 
the action of $G$ on $X_\lambda$ is not geometrically finite,
\end{enumerate}
where $\{0,1\}^\N$ is the set of all functions from $\N$ to $\{0,1\}$.
\end{Cor}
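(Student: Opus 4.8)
The plan is to realize every $X_\lambda$ as an explicit blow-down of $X$, that is, as the quotient of $X$ by a $G$-invariant upper semicontinuous decomposition, and then to read off properties (1)--(5) from minimality, from the rigidity of geometrically finite actions recalled just before Theorem \ref{geomorder}, and from the uniqueness of $G$-equivariant continuous maps in Lemma \ref{equivcont} and Remark \ref{geomorder'}. Concretely, I would first fix a countable collection $C_1,C_2,\dots$ of pairwise non-$G$-equivalent non-degenerate subcontinua of $X$, each with finite stabilizer, chosen so that $\{gC_n : g\in G,\ n\in\N\}$ is a null family of pairwise disjoint continua. For $\lambda\in\{0,1\}^\N$ let $\mathcal D_\lambda$ be the $G$-invariant decomposition of $X$ whose non-degenerate members are exactly the translates $gC_n$ with $n\in\lambda^{-1}(\{1\})$, together with one fixed base orbit that is always collapsed (so that the quotient map is non-injective even when $\lambda=0$), and set $X_\lambda:=X/\mathcal D_\lambda$ with canonical quotient $\pi_\lambda:X\to X_\lambda$. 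The central technical input, to be supplied by the blow-down machinery underlying Theorem \ref{geomorder}, is that collapsing such a null family of disjoint finite-stabilizer continua again yields a minimal non-elementary convergence action of $G$, with $\pi_\lambda$ equivariant, continuous and surjective, and hence (by Lemma \ref{equivcont}) the unique $G$-equivariant continuous map $X\to X_\lambda$.

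Granting this, (2) is immediate, and (1) follows from the stabilizer constraint: since each collapsed continuum has finite stabilizer, no new infinite parabolic subgroup is created and no two distinct bounded parabolic points are merged, so the maximal parabolic subgroups are unchanged and $\H(X_\lambda)=\H(X)$. I would deduce (5) by rigidity rather than by a direct dynamical argument: if some $X_\lambda$ were geometrically finite, then since $\H(X_\lambda)=\H(X)$ the rigidity statement recalled before Theorem \ref{geomorder} would force a $G$-equivariant homeomorphism $X_\lambda\cong X$; composing it with the non-injective quotient $\pi_\lambda$ would produce a non-injective $G$-equivariant continuous self-map of $X$, whereas by the uniqueness in Lemma \ref{equivcont} the only $G$-equivariant continuous self-map of $X$ is the identity. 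This contradiction shows every $X_\lambda$ is non-geometrically finite.

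For (3) I would use uniqueness of equivariant maps twice. If $\lambda^{-1}(\{1\})\supseteq\lambda'^{-1}(\{1\})$ then $\mathcal D_\lambda$ coarsens $\mathcal D_{\lambda'}$, so $\pi_\lambda$ factors as $X\to X_{\lambda'}\to X_\lambda$ and $X_\lambda$ is a blow-down of $X_{\lambda'}$. Conversely, given a $G$-equivariant continuous map $\phi:X_{\lambda'}\to X_\lambda$, the maps $\phi\circ\pi_{\lambda'}$ and $\pi_\lambda$ are both $G$-equivariant continuous maps $X\to X_\lambda$, hence equal by uniqueness; comparing the non-degenerate fibres of the two sides shows that every orbit collapsed by $X\to X_{\lambda'}$ is already collapsed by $\pi_\lambda$, i.e. $\lambda^{-1}(\{1\})\supseteq\lambda'^{-1}(\{1\})$. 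Property (4) is then a formal consequence of (3): a $G$-equivariant homeomorphism $X_\lambda\cong X_{\lambda'}$ exhibits each space as a blow-down of the other, forcing $\lambda^{-1}(\{1\})=\lambda'^{-1}(\{1\})$ and hence $\lambda=\lambda'$.

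The main obstacle is the central technical input itself: showing that the chosen collapse really yields a convergence action that is not geometrically finite while leaving $\H(X)$ intact. Preserving $\H$ forces the collapsed continua to have elementary, indeed finite, stabilizers, because a non-elementary limit set cannot be collapsed to a single point (a non-elementary subgroup fixes no point) and collapsing the limit set of a two-ended loxodromic subgroup would turn it parabolic and so change $\H$; one must therefore first prove that suitable finite-stabilizer null families exist in the perfect compactum $X$ and that collapsing them is compatible with the convergence property. The sharpest point is then to verify non-geometric-finiteness, namely that the collapse destroys conicality, producing a limit point of $X_\lambda$ (the image of a collapsed continuum, or a point onto which such continua accumulate) that is neither conical nor bounded parabolic. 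Finally, securing the full lattice in (3) requires the countably many orbits $GC_n$ to be genuinely independent, so that $\lambda\mapsto\mathcal D_\lambda$ is order-faithful; this, together with the conicality analysis of the quotient, is where the real work lies.
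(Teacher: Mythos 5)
Your proposal has the right global shape---each $X_\lambda$ is ultimately obtained from $X$ by collapsing a $\lambda$-indexed family of finite-stabilizer compacta, and your formal deductions of (3), (4) and (5) from uniqueness of equivariant maps and from rigidity would be sound \emph{given} the setup---but the entire substance of the proof is deferred to your ``central technical input,'' and that input is neither supplied by the machinery you cite nor correct in the form you state it. First, $X$ may be totally disconnected (e.g.\ a Cantor set, as for a free group), so the ``non-degenerate subcontinua'' $C_n$ need not exist. Second, even after replacing continua by compacta, Theorem \ref{geomorder} gives no license to quotient $X$ by a hand-picked null decomposition and conclude that the result carries a convergence action: that theorem produces maps between \emph{geometrically finite} actions from an inclusion of peripheral structures, which is the opposite of what you need here. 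Since the action on $X$ is minimal, every orbit $GC_n$ accumulates on all of $X$, so upper semicontinuity of the decomposition and the convergence property of the quotient are exactly the points at issue, and you offer no mechanism for them; you acknowledge this yourself, which means the proposal is a plan rather than a proof.

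The paper's route is genuinely different and supplies precisely the missing mechanism. Using Theorem \ref{hypemb-DGO} and Lemma \ref{F_2} it produces nested hyperbolically embedded virtually non-abelian free subgroups $L(n)$ and $H(i,n+m)$ with \emph{finite} intersections, enlarges the peripheral structure to a proper relatively hyperbolic structure $\H^{(n)}_\lambda\supset\H(X)$ at each finite stage, and takes $X^{(n)}_\lambda$ to be the corresponding geometrically finite blow-down of $X$ (this is where Theorem \ref{geomorder} is used, in its correct direction). Then $X_\lambda:=\underleftarrow{\lim}_{n}X^{(n)}_\lambda$ is a minimal non-elementary convergence action by Lemma \ref{inverse}, and $\H(X_\lambda)=\bigwedge_{n}\H^{(n)}_\lambda=\H(X)$ because the added peripheral subgroups shrink to finite groups; the sets you wanted to collapse (nested intersections of limit sets $\Lambda(H(n,n+m),X)$) thus arise automatically as fibers, with the convergence property for free. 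Note also that the paper's proof of (3) does not rest on an asserted ``order-faithfulness'' of the decompositions but on a concrete dynamical invariant: when $\lambda(n)=0$ the set $\Lambda(H(n,n),X_\lambda)$ consists of conical limit points, while when $\lambda'(n)=1$ the set $\Lambda(H(n,n),X_{\lambda'})$ contains a non-conical point, so Lemma \ref{equivcont} (3) forbids a blow-down $X_{\lambda'}\to X_\lambda$. You would need to construct your decompositions explicitly and prove the quotients are convergence actions before any of your deductions can be run.
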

\noindent
The second family consists of uncountably infinitely many blow-downs with  
mutually different peripheral structures: 
\begin{Cor}\label{CT'''}
Let $G$ be a countable group and 
let $X$ be a compact metrizable space endowed 
with a geometrically finite convergence action of $G$. 
Then there exists a family $\{Y_\lambda\}_{\lambda\in \{0,1\}^\N}$ of 
compact metrizable spaces endowed with minimal non-elementary convergence actions of $G$ 
which satisfies the following:
\begin{enumerate}
\item[(1)] for every $\lambda\in\{0,1\}^\N$, $\H(X)\neq \H(Y_\lambda)$;
\item[(2)] for every $\lambda\in\{0,1\}^\N$, $Y_\lambda$ is a blow-down of $X$;
\item[(3)] for any $\lambda, \lambda'\in\{0,1\}^\N$, the following are equivalent:
\begin{itemize}
\item $Y_\lambda$ is a blow-down of $Y_{\lambda'}$;
\item $\H(Y_\lambda)\supset \H(Y_{\lambda'})$;
\item $\lambda^{-1}(\{1\})\supset \lambda'^{-1}(\{1\})$;
\end{itemize}
\item[(4)] for any $\lambda, \lambda'\in\{0,1\}^\N$ such that $\lambda\neq\lambda'$, 
$\H(Y_\lambda)$ is not equal to $\H(Y_{\lambda'})$. In particular, 
$Y_\lambda$ is not $G$-equivariant homeomorphic to $Y_{\lambda'}$; 
\item[(5)] for every $\lambda\in\{0,1\}^\N$, the action of $G$ on $Y_\lambda$ 
is not geometrically finite. In particular, 
$\H(Y_\lambda)$ contains infinitely many conjugacy classes if $\lambda^{-1}(\{1\})$ is infinite.
\end{enumerate}
\end{Cor}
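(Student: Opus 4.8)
The plan is to realize each $Y_\lambda$ as a blow-down of $X$ whose peripheral structure is obtained from $\H(X)$ by adjoining a family of new maximal parabolic subgroups indexed by $\lambda^{-1}(\{1\})$ together with one fixed extra subgroup (so that $\H(Y_\lambda)\neq \H(X)$ holds even when $\lambda=0$), while simultaneously arranging that the action never remains geometrically finite. Since the action on $X$ is geometrically finite, $G$ is non-elementary relatively hyperbolic with peripheral structure $\H(X)$, and in particular contains infinitely many pairwise independent loxodromic elements. I would fix such elements $g_0,g_1,g_2,\dots$, let $E_n$ be the maximal elementary (virtually cyclic) subgroup containing $g_n$, and choose them pairwise non-conjugate and independent of $\H(X)$, so that each $E_n$ may be adjoined to the peripheral structure.

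For each finite $S\subseteq\N$ I would first build a geometrically finite model. Adjoining finitely many independent loxodromic elementary subgroups to a relatively hyperbolic structure again yields one, so $\H(X)\cup\{E_n : n\in\{0\}\cup S\}$ is a proper relatively hyperbolic structure on $G$; realizing it (as in the references producing geometrically finite actions from relatively hyperbolic structures) gives a compactum $X_S$ with a geometrically finite $G$-action and this peripheral structure. Because $\H(X_S)\to\H(X_{S'})$ whenever $S\subseteq S'$, Theorem \ref{geomorder} supplies equivariant continuous (hence surjective) maps $X\to X_S\to X_{S'}$, and the uniqueness in Remark \ref{geomorder'} makes them compatible, so the $X_S$ form a directed system of blow-downs of $X$.

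For arbitrary $\lambda$ with $T=\lambda^{-1}(\{1\})$ I would define $Y_\lambda$ as the limit of the system $\{X_S\}_{S\subseteq T\ \mathrm{finite}}$, realized concretely as the quotient of $X$ by the smallest closed $G$-invariant equivalence relation containing all identifications made by the maps $X\to X_S$. Taking all $Y_\lambda$ to be such quotients, with the relations monotone in $T$, makes the blow-down maps $Y_{\lambda'}\to Y_\lambda$ for $T'\subseteq T$ automatic and yields statement (2); the peripheral structure of the limit is $\H(X)\cup\{E_n : n\in\{0\}\cup T\}$. To guarantee that \emph{every} $Y_\lambda$ fails to be geometrically finite, including the case of finite $T$ where only finitely many parabolic conjugacy classes appear, I would build into this limit the non-geometric-finiteness mechanism underlying Corollary \ref{CT}, so that a Corollary \ref{CT}-type obstruction (a limit point that is neither conical nor bounded parabolic) is present for all $\lambda$; when $T$ is infinite the presence of infinitely many parabolic conjugacy classes gives non-geometric-finiteness directly, which is the last clause of (5).

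The order relation (3) and the distinctness statements (4) and (1) would then follow formally: a blow-down map $Y_{\lambda'}\to Y_\lambda$ forces $\H(Y_{\lambda'})\to\H(Y_\lambda)$ by Lemma \ref{equivcont}(5), and since the adjoined subgroups $E_n$ are maximal parabolic and pairwise non-conjugate this translates into $T'\subseteq T$, while the converse inclusion produces the map by construction; non-conjugacy of the $E_n$ then yields $\H(Y_\lambda)\neq\H(Y_{\lambda'})$ for $\lambda\neq\lambda'$ and $\H(Y_\lambda)\neq\H(X)$, with $G$-equivariant homeomorphism excluded because homeomorphic compacta share their peripheral structure. I expect the main obstacle to be the limit construction itself: one must verify that the infinite quotient $Y_\lambda$ is a compact metrizable space carrying a minimal non-elementary convergence action whose maximal parabolic subgroups are \emph{exactly} the prescribed ones, i.e. that passing to the infinite limit neither destroys the convergence property nor merges or creates unexpected parabolic points. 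The natural tool is the augmented (cusped) space of the relatively hyperbolic structure: attaching combinatorial horoballs along all peripheral cosets, including the infinitely many adjoined ones, should yield a hyperbolic space whose boundary is $Y_\lambda$, with geometric finiteness failing because the horoball orbits are no longer finite in number. Establishing hyperbolicity of this augmented space when infinitely many independent elementary subgroups are coned off, and identifying its boundary with the quotient limit above, is the technical heart of the argument.
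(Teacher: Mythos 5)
Your finite-stage setup is reasonable (adjoining maximal elementary subgroups of independent loxodromics via Osin's results does give proper relatively hyperbolic structures $\H_S=\H(X)\cup\{E_n\}_{n\in\{0\}\cup S}$, and the formal deductions of (1), (3), (4) from Lemma \ref{equivcont}(5) and non-conjugacy are fine), but the central step --- passing to infinite $\lambda^{-1}(\{1\})$ --- is where the proof actually lives, and your route for it does not work. Since $\H_S\to\H_{S'}$ exactly when $S\subset S'$, Theorem \ref{geomorder} gives maps $X_S\to X_{S'}$, so your system $\{X_S\}$ is \emph{directed}, not projective, and your $Y_\lambda$ must be a direct limit, i.e.\ a quotient of $X$ by an increasing union of identifications. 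Such limits need not be Hausdorff and need not carry convergence actions: Corollary \ref{direct} of the paper is an explicit counterexample manufactured by precisely your process of successively adjoining elementary subgroups of loxodromic elements, and nothing in your proposal rules out the same pathology for the $E_n$. The paper sidesteps this by reversing the direction: it holds the subgroups it wants to survive \emph{constant} (the $H(i,i)$) while adjoining an auxiliary \emph{nested} sequence $L(n)$ with finite intersection, so that $\H^{(n)}_\lambda\to\H^{(m)}_\lambda$ for $n\ge m$, the maps run $X^{(n)}_\lambda\to X^{(m)}_\lambda$, and $Y_\lambda$ is the \emph{projective} limit, which Lemma \ref{inverse} (Gerasimov) guarantees is a compact metrizable space with a minimal non-elementary convergence action whose peripheral structure is computed by Lemma \ref{diag}.

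Two further gaps. First, non-geometric finiteness when $\lambda^{-1}(\{1\})$ is finite is only promised (``build into this limit the mechanism underlying Corollary \ref{CT}'') and never constructed; in the paper this mechanism \emph{is} the shrinking sequence $L(n)$, whose parabolic fixed points converge in the projective limit to points that are neither conical nor bounded parabolic, so it cannot be bolted on afterwards --- it must be part of the same projective system. Second, the ``technical heart'' you propose (a hyperbolic augmented space with infinitely many horoball orbits whose ideal boundary is $Y_\lambda$) is self-defeating: if such a picture held, every point of $Y_\lambda$ would be conical or bounded parabolic and the action would be geometrically finite, contradicting assertion (5). The correct tools are the projective-limit lemmas already in Section \ref{convact}, not an infinite cusped space.
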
 
 
In Section \ref{convact}, we gather and show some properties of convergence actions. 
Also Propositions \ref{noblowup'} and \ref{noblowup''} related to blow-ups of 
compact metrizable spaces with geometrically finite convergence actions are proved. 
In Section \ref{aug}, we recall some terminologies about augmented spaces in \cite{G-M08}. 
In Section \ref{go}, Theorem \ref{geomorder} is proved. 
Also a corollary of Theorem \ref{geomorder} 
for semiconjugacies is shown (Corollary \ref{pphi}).  
In Section \ref{ct},
we deal with applications of Theorem \ref{geomorder}. 
Indeed we prove Corollaries \ref{CT} and \ref{CT'''}. 
Also some other corollaries are given. 
In \ref{no}, a criterion for countable groups to have no non-elementary convergence actions
is shown. 
In \ref{sect-DGO}, we give a remark on hyperbolically embedded virtually free subgroups
of relatively hyperbolic groups.
 
\section{Convergence actions and peripheral structures}\label{convact}
In this section, we recall some definitions related to convergence actions and 
geometrically finite convergence actions of groups 
(refer to \cite{Tuk94}, \cite{Tuk98}, \cite{Bow12} and \cite{Bow99b})
and show some properties. 

Let $G$ be a countable group with a continuous action on a compact metrizable space $X$. 
The action is called a {\it convergence action} of $G$ on $X$
if $X$ has infinitely many points
and for each infinite sequence $\{g_i\}$ of mutually different elements of $G$, 
there exist a subsequence $\{g_{i_j}\}$ of $\{g_i\}$ 
and two points $r,a\in X$ such that $g_{i_j}|_{X\setminus \{r\}}$ converges to $a$ 
uniformly on any compact subset of $X\setminus \{r\}$ 
and also $g_{i_j}^{-1}|_{X\setminus \{a\}}$ converges to $r$ 
uniformly on any compact subset of $X\setminus \{a\}$. 
The sequence $\{g_{i_j}\}$ is called a {\it convergence sequence} 
and also the points $r$ and $a$ are called the {\it repelling point} of $\{g_{i_j}\}$ 
and the {\it attracting point} of $\{g_{i_j}\}$, respectively. 

We fix a convergence action of $G$ on a compact metrizable space $X$.
The set of all repelling points and attracting points is equal to 
the limit set $\Lambda(G,X)$ (\cite[Lemma 2M]{Tuk94}). 
The cardinality of $\Lambda(G,X)$ is $0$, $1$, $2$ or $\infty$ 
(\cite[Theorem 2S, Theorem 2T]{Tuk94}). 
If $\#\Lambda(G,X)=\infty$, then the action of $G$ on $X$ is called a 
{\it non-elementary convergence action}. 
When the action of $G$ on $X$ is a non-elementary convergence action, 
$G$ is infinite and the induced action of $G$ on $\Lambda(G,X)$ 
is a minimal non-elementary convergence action.
We remark that $\#\Lambda(G,X)=0$ if $G$ is finite by definition. 
Also $G$ is virtually infinite cyclic if $\#\Lambda(G,X)=2$ 
(see \cite[Lemma 2Q, Lemma 2N and Theorem 2I]{Tuk94}). 
An element $l$ of $G$ is {\it loxodromic} if it is of infinite order and has exactly two fixed points. 
For a loxodromic element $l\in G$, 
the sequence $\{l^i\}_{i\in \N}$ is a convergence sequence with the repelling point $r$ 
and the attracting point $a$, 
which are distinct and fixed by $l$. 
We call a subgroup $H$ of $G$ a {\it parabolic subgroup} if 
it is infinite, fixes exactly one point and has no loxodromic elements. 
Such a point is called a {\it parabolic point}. 
A parabolic point is {\it bounded} if its maximal parabolic subgroup 
acts cocompactly on its complement. 
We call a point $r$ of $X$ a {\it conical limit point} 
if there exists a convergence sequence $\{g_i\}$ with the attracting point $a\in X$ 
such that the sequence $\{g_i(r)\}$ 
converges to a different point from $a$.  
A convergence action of $G$ on $X$ is {\it geometrically finite} 
if every point of $X$ is either a conical limit point or a bounded parabolic point.
Since $X$ has infinitely many points, 
every geometrically finite convergence action is non-elementary. 
Also since all conical limit points and all bounded parabolic points belong to the limit set, 
every geometrically finite convergence action is minimal.

The following is a special case of \cite[Proposition 8.3.1]{Ger10}, 
which claims that a compact metrizable space endowed with 
a minimal non-elementary convergence action of a countable group 
naturally gives a compactification of the group: 
\begin{Lem}\label{b'}
Let $G$ be a countable group and $X$ be a compact metrizable space
endowed with a minimal non-elementary convergence action of $G$.
Then $X$ gives a unique compactification of $G$ such that 
the natural action of $G$ on $G\cup X$ is a convergence action 
whose limit set is $X$. 
\end{Lem}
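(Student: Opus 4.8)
The plan is to realize $G\cup X$ concretely by putting an explicit topology on the disjoint union $Z=G\sqcup X$ in which every point of $G$ is isolated and in which group elements accumulate onto $X$ exactly according to the convergence dynamics. Fix a metric $d$ on $X$ with balls $B(\cdot,\cdot)$. For $\xi\in X$ and $\epsilon>0$ I would declare a basic neighborhood of $\xi$ to be
\[
N_\epsilon(\xi)=B(\xi,\epsilon)\cup\{g\in G:\text{there is }p\in X\text{ with }g(X\setminus B(p,\epsilon))\subseteq B(\xi,\epsilon)\},
\]
so that $g$ counts as near $\xi$ precisely when it contracts the complement of some small ball into a small ball around $\xi$. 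The point of this definition is that if $\{g_i\}$ is a convergence sequence with attracting point $\xi$ and repelling point $r$, then choosing $p=r$ and using uniform convergence of $g_i$ to $\xi$ on the compact set $X\setminus B(r,\epsilon)$ shows $g_i\in N_\epsilon(\xi)$ for large $i$; thus $g_i\to\xi$ in $Z$, which is exactly the accumulation behavior a compactification with boundary the limit set should have.

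The first block of work is purely point-set: checking that the $N_\epsilon(\xi)$ together with the singletons $\{g\}$ form a neighborhood basis for a topology, and that this topology is Hausdorff. Separation of two group elements is trivial; to separate $g\in G$ from $\xi\in X$ one notes that, since $g$ is a homeomorphism of $X$, the diameter of $g(X\setminus B(p,\epsilon))$ is bounded below independently of $p$ once $\epsilon$ is small, so $g\notin N_\epsilon(\xi)$ for small $\epsilon$. To separate $\xi\neq\eta$ in $X$ I would use that the limit set of a non-elementary convergence action is perfect and infinite, hence not coverable by two balls of small radius; this prevents a single $g$ from lying in both $N_\epsilon(\xi)$ and $N_\epsilon(\eta)$ when $B(\xi,\epsilon)\cap B(\eta,\epsilon)=\emptyset$. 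Since $G$ is countable and $X$ is second countable, $Z$ is second countable, so compactness reduces to sequential compactness: a sequence with infinitely many terms in $X$ subconverges by compactness of $X$, while a sequence of distinct group elements has, by the defining property of the convergence action, a subsequence that is a convergence sequence, which converges to its attracting point by the computation above. Thus $Z$ is compact metrizable, contains $G$ as an open subset, and contains $X$ as a closed subset carrying its original topology; density of $G$ follows because the attracting fixed points of loxodromic elements are dense in $X$, so every $N_\epsilon(\xi)$ contains high powers $l^n$.

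Next I would verify that the natural action of $G$ on $Z$, given by left translation on $G$ and by the prescribed action on $X$, is by homeomorphisms and is a convergence action with limit set $X$. Each $g$ permutes the isolated points and acts on $X$ by a homeomorphism, and uniform continuity of $g$ on the compact space $X$ shows that $g$ carries basic neighborhoods to neighborhoods, so $g$ is a homeomorphism of $Z$. Given distinct $g_i\in G$, I pass to a convergence subsequence with repelling point $r$ and attracting point $a$ in $X$ and must check $g_i|_{Z\setminus\{r\}}\to a$ and $g_i^{-1}|_{Z\setminus\{a\}}\to r$ uniformly on compacta. The main obstacle is here: a compact subset of $Z\setminus\{r\}$ may contain infinitely many group elements accumulating onto a point of $X\setminus\{r\}$, so the required uniform convergence is \emph{not} a formal consequence of the convergence behavior on $X$ alone. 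I expect to reduce this to the sequential statement that $z_i\to z\neq r$ in $Z$ implies $g_i z_i\to a$, and to settle the delicate case $z_i=h_i\in G$ with $h_i\to z\in X$ by the standard composition lemma for convergence sequences: since the attracting point $z$ of $\{h_i\}$ differs from the repelling point $r$ of $\{g_i\}$, the products $\{g_i h_i\}$ again form a convergence sequence with attracting point $a$. The same elementary dynamics shows that all attracting and repelling points lie in $X$ and exhaust it, so the limit set of the action on $Z$ is exactly $X$.

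Finally, uniqueness follows from an intrinsic characterization of convergence toward the boundary. In any compactification of $G$ on which the action is a convergence action with limit set $X$ inducing the given dynamics, a sequence of distinct group elements cannot accumulate at an isolated point of $G$, so by compactness and the convergence-action property it subconverges to a point of $X$ which, because $X$ is closed and invariant and the action on $X$ is the prescribed one, must be the attracting point of that subsequence in the original dynamics. Hence the relation ``$g_i\to\xi\in X$'' is the same in every such compactification; the identity on $G\sqcup X$ therefore preserves convergent sequences in both directions, and, both spaces being compact metrizable, it extends to a $G$-equivariant homeomorphism. This gives the asserted uniqueness.
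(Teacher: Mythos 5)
Your proposal is correct in substance, but it reaches the existence half of the statement by a genuinely different route from the paper. The paper does not build the topology on $G\sqcup X$ by hand: it uses the fact that $X$ compactifies the space of distinct triples $\Theta_3(X)$ and that the action of $G$ on $\Theta_3(X)$ is proper, so the orbit map $g\mapsto go$ is proper and one simply pulls the compactification $\Theta_3(X)\cup X$ back to $G$; compactness, the convergence property, and the identification of the limit set are then inherited from the known convergence action on $\Theta_3(X)\cup X$, and density of $G$ follows from properness of the orbit map together with minimality. Your construction is instead the explicit ``attractor sum'' topology (in the spirit of Gerasimov's Proposition 8.3.1, which the paper cites as the general source), with basic neighborhoods $N_\epsilon(\xi)$ defined by the contraction condition. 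This is more self-contained and makes the mechanism visible, but it shifts the burden onto point-set verifications the paper never has to face: that the $N_\epsilon(\xi)$ form a basis, Hausdorffness, compactness, and --- the one genuinely delicate step, which you correctly isolate --- the convergence property along compacta of $Z\setminus\{r\}$ that contain infinitely many group elements, which you resolve with the composition lemma for convergence sequences (one should also note there that $h_i\to z\in X$ forces $\{h_i\}$ to leave every finite subset of $G$, so the products $g_ih_i$ cannot stabilize at an isolated point). Your density argument invokes density of loxodromic fixed points, a heavier fact than the paper's (properness of the orbit map plus minimality suffices), though it is valid for non-elementary convergence actions. For the uniqueness half, your argument and the paper's are essentially identical: both show that convergence of a sequence of group elements to a boundary point is intrinsically determined by the attracting points of its convergence subsequences, so the identity on $G\sqcup X$ is a homeomorphism between any two such compactifications; the paper tests against a point of $X$ avoiding both repelling points where you test against $e\in G$, which is an immaterial difference.
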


Let $G$ be a countable group. 
For a compact metrizable space $X$ endowed with a convergence action of $G$, 
the set $\H(X)$ of all maximal parabolic subgroups is 
conjugacy invariant and {\it almost malnormal}, that is, $H_1\cap H_2$ is finite 
for any different $H_1, H_2\in \H(X)$ 
and every $H\in \H(X)$ is equal to its normalizer in $G$ (\cite[Lemma 3.3]{M-O-Y3}). 
We call $\H(X)$ a {\it peripheral structure} of $X$.  
In general we define a {\it peripheral structure} on $G$ 
as a conjugacy invariant and almost malnormal collection of 
infinite subgroups of $G$. 
For a family $\{\H_i\}_{i\in I}$ of peripheral structures on $G$, 
we define 
\[
\bigwedge_{i\in I}\H_i:=
\{P\subset G\ |\ P\text{ is infinite and }
P=\bigcap_{i\in I}H_i\text{ for some }H_i\in\H_i\text{ for each }i\in I\}, 
\]
which is also a peripheral structure on $G$. 
For two peripheral structures $\H$ and $\K$ on $G$, we put $\K\to \H$ if 
for any $K\in \K$, there exists $H\in \H$ such that $K\subset H$.  
We have the following about $\to$ (\cite[Lemma 3.4]{M-O-Y3}): 
\begin{Lem}\label{order}
Let $G$ be a countable group. 
Then $\to$ is a partial order on the set of peripheral structures on $G$. 
\end{Lem}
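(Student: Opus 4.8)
The plan is to check directly the three axioms of a partial order for the relation $\to$: reflexivity, transitivity, and antisymmetry. I expect the first two to be purely formal and the third to carry all the content.

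For reflexivity, I would observe that for any peripheral structure $\H$ each $H\in\H$ satisfies $H\subset H$, so $\H\to\H$. For transitivity, given $\K\to\H$ and $\H\to\U$, I would take an arbitrary $K\in\K$, use $\K\to\H$ to find $H\in\H$ with $K\subset H$, then use $\H\to\U$ to find $U\in\U$ with $H\subset U$, and conclude $K\subset U$; this yields $\K\to\U$. Both arguments merely unwind the definition of $\to$ and need no properties of peripheral structures beyond their being collections of subgroups.

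The main step is antisymmetry: I would show that $\K\to\H$ and $\H\to\K$ together force $\K=\H$. Fixing $K\in\K$, the hypothesis $\K\to\H$ produces $H\in\H$ with $K\subset H$, and $\H\to\K$ then produces $K'\in\K$ with $H\subset K'$, so that $K\subset H\subset K'$ with $K,K'\in\K$. The crux is to rule out $K\neq K'$: if they were distinct, almost malnormality of $\K$ would make $K\cap K'$ finite, yet $K\subset K'$ gives $K\cap K'=K$, which is infinite because every member of a peripheral structure is an infinite subgroup---a contradiction. Hence $K=K'$, and the chain collapses to $H=K$, so $K\in\H$ and $\K\subset\H$; the symmetric argument, interchanging the roles of $\H$ and $\K$, gives the reverse inclusion, whence $\H=\K$.

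The one place where something genuinely has to be used is this antisymmetry argument, and the essential hypothesis is the almost malnormality of the structures---specifically that distinct members have finite intersection---combined with the standing requirement that every member is infinite. The self-normalizing clause in the definition of almost malnormality plays no role here, and the convergence action enters nowhere: once the definition of peripheral structure is fixed, the statement is purely group-theoretic.
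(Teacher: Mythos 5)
Your proof is correct: reflexivity and transitivity are formal, and your antisymmetry argument (a chain $K\subset H\subset K'$ with $K,K'\in\K$ forces $K=K'$ by the finite-intersection clause of almost malnormality together with the infiniteness of members, hence $H=K$) is exactly the content the lemma requires. The paper itself gives no proof here---it cites the statement as Lemma 3.4 of \cite{M-O-Y3}---and your argument is the natural one, correctly identifying that only the finite-intersection part of almost malnormality and the infiniteness of the members are used, while self-normalization and conjugacy invariance play no role.
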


We show some basic properties related to equivariant continuous maps 
between compact metrizable spaces endowed with convergence actions
(refer to \cite[Subsection 2.3 and Subsection 2.4]{Ger09} about 
Lemma \ref{equivcont} (1), (2) and (3)). 
\begin{Lem}\label{equivcont}
Let $G$ be a countable group. 
Let $X$ and $Y$ be compact metrizable spaces endowed with 
minimal non-elementary convergence actions of $G$. 
Suppose that there exists a $G$-equivariant continuous map $\pi$ from $X$ to $Y$. 
Then we have the following: 
\begin{enumerate}
\item[(1)] $\pi$ is surjective;
\item[(2)] $\pi$ is a unique $G$-equivariant continuous map from $X$ to $Y$;  
\item[(3)] if $r\in Y$ is a conical limit point, then $\pi^{-1}(r)$ consists of one conical limit point;
\item[(4)] if $q\in Y$ is a bounded parabolic point and $H$ is its maximal parabolic subgroup, 
then $\pi^{-1}(q)=\Lambda(H,X)$; 
\item[(5)] $\H(X)\to \H(Y)$;
\item[(6)] $id_G\cup \pi:G\cup X\to G\cup Y$ is continuous.
\end{enumerate}
\end{Lem}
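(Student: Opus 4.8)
The plan is to first isolate one functoriality principle for convergence dynamics and then deduce the six items from it together with minimality and surjectivity. The principle is: if $\{g_n\}$ is a convergence sequence for the action on $X$ with repelling point $r$ and attracting point $a$, then every convergence subsequence of $\{g_n\}$ for the action on $Y$ has repelling point $\pi(r)$ and attracting point $\pi(a)$. I would verify this directly: for $z\in X\setminus\{r\}$ we have $g_n z\to a$, hence $g_n\pi(z)=\pi(g_n z)\to\pi(a)$ by continuity and equivariance; since $\pi(X\setminus\{r\})$ is infinite it meets $Y\setminus\{r_Y\}$ for the attracting/repelling pair $(r_Y,a_Y)$ of any convergence subsequence on $Y$, which forces $a_Y=\pi(a)$, and symmetrically $r_Y=\pi(r)$ from $\{g_n^{-1}\}$. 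Granting this, (1) is immediate: $\pi(X)$ is compact and $G$-invariant, so by minimality of the action on $Y$ it equals $Y$. For (2) I would use that the fixed points of loxodromic elements are dense in a minimal limit set (standard for non-elementary convergence actions, cf.\ the cited work of Tukia): if $l$ is loxodromic on $X$ with fixed points $r_X,a_X$, then applying the principle to $\{l^i\}$ shows any equivariant continuous map must send $a_X,r_X$ to the attracting and repelling points of a fixed convergence subsequence of $\{l^i\}$ on $Y$, data independent of the map; so two such maps agree on a dense set and coincide.

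For (3), let $r\in Y$ be conical via a convergence sequence $\{g_n\}$ with repelling point $r$ and attracting point $a$ satisfying $g_n(r)\to b\neq a$. Passing to a subsequence realizing a convergence sequence on $X$ with repelling point $r_X$ and attracting point $a_X$, the principle gives $\pi(r_X)=r$ and $\pi(a_X)=a$. If $x\in\pi^{-1}(r)$ with $x\neq r_X$ then $g_n(x)\to a_X$, so $g_n(r)=\pi(g_n x)\to\pi(a_X)=a$, contradicting $g_n(r)\to b$; hence $\pi^{-1}(r)=\{r_X\}$, and $r_X$ is conical since $g_n(r_X)$ accumulates at a point over $b$, that is, away from $a_X$. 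For (4), write $P=\pi^{-1}(q)$, a closed $\Stab_G(q)=H$-invariant set. As $H$ is parabolic on $Y$ its limit set there is $\{q\}$, so the principle yields $\pi(\Lambda(H,X))\subseteq\Lambda(H,Y)=\{q\}$, i.e.\ $\Lambda(H,X)\subseteq P$. Since $q$ is bounded, $H$ acts cocompactly on $Y\setminus\{q\}$; because $\pi$ restricts to a proper map $X\setminus P\to Y\setminus\{q\}$, lifting a compact transversal shows $H$ acts cocompactly on $W:=X\setminus P$.

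The reverse inclusion in (4) is the main obstacle, and it is where boundedness of $q$ and minimality are both indispensable. First I would show every boundary point of $W$ lies in $\Lambda(H,X)$: such a point is approached by $w_n=h_n k_n$ with $k_n$ in a compact transversal and $h_n\to\infty$, so after passing to a convergence subsequence the limit is the attracting point $a_X\in\Lambda(H,X)$. Second, $P$ has empty interior, for if an open $U\subseteq P$ existed then minimality of the $G$-action on $X$ would give finitely many translates $g_iU$ covering $X$, whence $\pi(X)=\{g_iq\}$ is finite, contradicting surjectivity onto the infinite $Y$. Thus $W$ is dense, $P=\partial W\subseteq\Lambda(H,X)$, and with the earlier inclusion $P=\Lambda(H,X)$. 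Item (5) uses the same bookkeeping: if $K\in\H(X)$ fixes the parabolic point $p\in X$ then $K$ fixes $\pi(p)$, and for $k\in K$ of infinite order $\langle k\rangle$ has limit set $\{p\}$ on $X$, so $k^i w\to\pi(p)$ and $k^{-i}w\to\pi(p)$ for all $w$ in the infinite set $\pi(X\setminus\{p\})$; were $k$ loxodromic on $Y$ with fixed points $a\neq r$ these forced limits would give $a=\pi(p)=r$, a contradiction. Hence $K$ has no loxodromics on $Y$, and since a convergence group fixing two distinct points is virtually cyclic and contains a loxodromic, $K$ fixes only $\pi(p)$; so $K$ is parabolic on $Y$ and lies in the maximal parabolic $H\in\H(Y)$ at $\pi(p)$, giving $\H(X)\to\H(Y)$.

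Finally, for (6), by Lemma \ref{b'} the actions of $G$ on the metrizable compactifications $G\cup X$ and $G\cup Y$ are convergence actions with limit sets $X$ and $Y$; in such a compactification a sequence $g_n\in G$ converges to $\xi\in X$ precisely when, up to subsequence, $\{g_n\}$ is a convergence sequence of transformations with attracting point $\xi$, since evaluating at the basepoint $e\in G$ gives $g_n=g_n\cdot e\to\xi$. Continuity of $id_G\cup\pi$ at points of $G$ is clear, and at a point of $X$ it reduces, by continuity of $\pi$ and metrizability, to sequences $g_n\to\xi\in X$; applying the functoriality principle to the ambient actions, $\{g_n\}$ is then a convergence sequence on $G\cup Y$ with attracting point $\pi(\xi)$, so $g_n\to\pi(\xi)$, which is what continuity of $id_G\cup\pi$ requires.
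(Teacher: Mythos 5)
Your proof is correct, and for items (1), (2), (3) and (6) it follows essentially the paper's line: everything is reduced to the principle that a $G$-equivariant continuous map sends the attracting and repelling points of a convergence (sub)sequence on $X$ to those of the same sequence on $Y$ (the paper proves (2) by noting every point of a minimal limit set is an attracting point, where you use density of loxodromic fixed points; this is immaterial). The genuine differences are in (4) and (5). For the inclusion $\pi^{-1}(q)\subset \Lambda(H,X)$ in (4), the paper fixes $x\in\pi^{-1}(q)$, takes a convergence sequence with attracting point $x$, decomposes its terms along right cosets of $H$ and extracts a convergence sequence inside $H$ attracting to $x$; this needs a preliminary case split on whether $\pi^{-1}(q)$ has at least two points. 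You instead argue topologically: $H$ acts cocompactly on $W=X\setminus\pi^{-1}(q)$ via the lifted transversal $\pi^{-1}(K)$, every boundary point of $W$ lies in $\Lambda(H,X)$, and $\pi^{-1}(q)$ has empty interior by minimality (otherwise finitely many translates of an open subset of the fibre would cover $X$ and force $\pi(X)$ to be finite), whence $\pi^{-1}(q)=\partial W\subset\Lambda(H,X)$. This is cleaner, avoids the coset bookkeeping and the case split, and isolates exactly where minimality enters. For (5), the paper excludes a loxodromic fixing $\pi(p)$ by observing its repelling point is conical and invoking item (3); you instead analyse the dynamics of $\{k^{\pm i}\}$ for infinite-order $k\in K$ and then use that an infinite subgroup fixing two points is virtually cyclic and contains a loxodromic, which makes (5) independent of (3). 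Two places to tighten the write-up: in (6) your principle is stated only for $\pi:X\to Y$, so before "applying it to the ambient actions" you should record that a convergence subsequence on $G\cup X$ restricts to one on the limit set $X$ with the same attracting and repelling points (and likewise for $Y$), since the extension $id_G\cup\pi$ is exactly what is being constructed; and in (4) the step "$h_nk_n$ converges to the attracting point" should explicitly note that the repelling point of a convergence subsequence of $\{h_n\}$ lies in $\Lambda(H,X)\subset\pi^{-1}(q)$ and hence outside the compact set $\pi^{-1}(K)$.
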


\begin{Rem}
Lemma \ref{equivcont} (4) is considered as a generalization of \cite[Theorem A]{G-P09}.
\end{Rem}

\begin{proof}[Proof of Lemma \ref{equivcont}]
(1)~ Minimality of the action on $Y$ implies that $\pi$ is surjective. 

(2)~ Suppose that $\pi':X\to Y$ is a $G$-equivariant continuous map. 
We take any $p\in X$. 
There exists a convergence sequence $\{g_i\}$
whose attracting point is $p$. 
Then $\pi'(p)$ as well as $\pi(p)$ is the attracting point of 
the convergence sequence $\{g_i\}$. 
Hence we have $\pi(p)=\pi'(p)$.

(3)~ Suppose that $r\in Y$ is a conical limit point. 
Then we have a convergence sequence $\{g_i\}$ with respect to $Y$ 
such that the repelling point is $r$ and $g_i r$ converges to $b\in Y$ which is 
different from the attracting point $a\in Y$.  
We take a convergence subsequence $\{g_{i_j}\}$ of $\{g_i\}$ with respect to $X$. 
We have the repelling point $r'\in X$ and 
the attracting point $a'\in X$.  
Also we can assume that $g_{i_j}r'$ converges to $b'\in X$. 
Then we have $\pi(r')=r$, $\pi(a')=a$ and $\pi(b')=b$. 
Since $a\neq b$, we have $a'\neq b'$. 
Hence $r'$ is a conical limit point. 
Assume that there exists $z\in \pi^{-1}(r)$ such that $z\neq r'$.
Then we have $g_{i_{j}}z\to a'$ and thus $g_{i_{j}}r\to \pi(a')=a$.
This contradicts the fact that $a\neq b$.

(4)~ Clearly we have $\Lambda(H,X)\subset \pi^{-1}(q)$.
Since $\Lambda(H,X)$ has at least one point, we assume that $\pi^{-1}(q)$ has at least two points.
We prove that $\pi^{-1}(q)\subset \Lambda(H,X)$.
Since $q\in Y$ is a bounded parabolic point, 
we have a compact subset $K\subset Y\setminus \{q\}$
such that $HK=Y\setminus \{q\}$. 
We have $H\pi^{-1}(K)=X\setminus \pi^{-1}(q)$. 
For any $x\in \pi^{-1}(q)$, we have a convergence sequence $\{g_i\}$
whose attracting point is $x$. 
If $\{g_i\}$ have a subsequence whose elements belong to a right coset of $H$, 
then $x\in \Lambda (H,X)$. 
We assume that all elements of $\{g_i\}$ belong to mutually different right cosets of $H$
and does not belong to $H$. 
Then we have some $h'_i\in H$ and $r_i\in G\setminus H$ such that $g_i=h'_ir_i$ for any $i$. 
Since $\pi^{-1}(q)$ has at least two points, we have a point $x'\in \pi^{-1}(q)$
that is not the repelling point of $\{g_i\}$. 
Hence we have $h'_ir_ix'\to x$ and $r_ix'\notin \pi^{-1}(q)$. 
We have $h''_i\in H$ and $k'_i\in \pi^{-1}(K)$ such that $r_ix'=h''_ik'_i$. 
Now we have a sequence $\{h_i:=h'_ih''_i\}$ of $H$ and $\{k'_i\}$ of $\pi^{-1}(K)$
such that $h_ik'_i\to x$ in $X$. 
Since $x\notin H\pi^{-1}(K)$, 
we can assume that $\{h_i\}$ is a sequence of mutually different elements. 
We take a convergence subsequence $\{h_{i_j}\}$ with respect to $X$. 
Both its attracting point $a$ and its repelling point $r$ are in $\pi^{-1}(q)$.
Since $\pi^{-1}(K)$ is compact and does not contain $r$, 
$h_{i_j}(\pi^{-1}(K))$ uniformly converges to $a$. 
Then $h_{i_j}k'_{i_j}$ must converge to $a$. 
Now we have $x=a$. Hence $\pi^{-1}(q)\subset \Lambda(H,X)$. 

(5)~ We prove $\H(X)\to \H(Y)$. 
We take $K\in \H(X)$ with the fixed point $p$. 
Then $\pi(p)$ is fixed by $K$. 
Assume that there exists a loxodromic element $l\in G$ with respect to $Y$
such that $\pi(p)$ is the repelling point of $\{l^n\}_{n\in \N}$.
Since $\pi(p)$ is conical, $\pi^{-1}(\pi(p))=\{p\}$ by Lemma \ref{equivcont} (3). 
Hence $l$ fixes $p$ and thus $l\in K$. 
Hence $l$ has the only fixed point $\pi(p)$. 
This contradicts the assumption that $l$ is loxodromic.
Hence $\pi(p)$ is a parabolic point with a parabolic subgroup $K$.  

(6)~ We take a sequence $\{g_n\}$ which converges to $x\in X$
in $G\cup X$. 
We take any accumulation point $y\in Y$ of $\{g_n\}$ 
and a subsequence $\{g_{n_i}\}$ such that $g_{n_i}\to y$ in $G\cup Y$. 
We take a convergence subsequence $\{g_{n_{i_j}}\}$ 
with respect to both $G\cup X$ and $G\cup Y$. 
The attracting points in $G\cup X$ and $G\cup Y$ are $x$ and $y$, 
respectively. 
Then $x$ and $y$ are the attracting points in $X$ and $Y$, 
respectively. 
Hence we have $\pi(x)=y$. 
Thus we have $g_n\to y$ in $G\cup Y$. 
\end{proof}

Lemma \ref{equivcont} (1), (3) and (4) imply the following
(compare with Corollary \ref{CT}):
\begin{Prop}\label{noblowup'}
Let $G$ be a countable group. 
Let $X$ be a compact metrizable space endowed with 
a minimal non-elementary convergence action of $G$
and $Y$ be a compact metrizable space endowed with 
a geometrically finite convergence action of $G$. 
Let $\pi:X\to Y$ be a $G$-equivariant continuous map. 
If $\H(X)=\H(Y)$, 
then $\pi$ is a $G$-equivariant homeomorphism.
\end{Prop}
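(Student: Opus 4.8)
The plan is to reduce the statement to injectivity of $\pi$. By Lemma \ref{equivcont} (1) the map $\pi$ is a continuous surjection, it is $G$-equivariant by hypothesis, and both $X$ and $Y$ are compact metrizable, hence compact Hausdorff; since a continuous bijection from a compact space to a Hausdorff space is automatically a homeomorphism, it suffices to prove that every fiber $\pi^{-1}(y)$, $y\in Y$, is a single point. To do this I would use geometric finiteness of the action on $Y$, which tells us that every point of $Y$ is either a conical limit point or a bounded parabolic point, and treat the two types of fibers separately.

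The conical case is immediate: if $y\in Y$ is a conical limit point, then Lemma \ref{equivcont} (3) gives that $\pi^{-1}(y)$ consists of exactly one point. So the whole difficulty is concentrated in the parabolic fibers. Let $q\in Y$ be a bounded parabolic point with maximal parabolic subgroup $H$, so that $H\in\H(Y)$. By Lemma \ref{equivcont} (4) we have $\pi^{-1}(q)=\Lambda(H,X)$, and hence it remains only to show that $\Lambda(H,X)$ is a singleton. This is exactly where the hypothesis $\H(X)=\H(Y)$ is used: from $H\in\H(Y)=\H(X)$ we learn that $H$ is a maximal parabolic subgroup with respect to $X$ as well, so $H$ is infinite, contains no loxodromic element, and fixes a unique point $p\in X$.

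To finish, I would analyze the restricted convergence action of $H$ on $X$ (which is again a convergence action, $X$ being infinite) and its limit set $\Lambda(H,X)$, whose cardinality is $0$, $1$, $2$ or $\infty$. Since $H$ is infinite, $\Lambda(H,X)$ is nonempty, which rules out $0$. If $\#\Lambda(H,X)=2$ then $H$ would be virtually infinite cyclic and would therefore contain a loxodromic element, while if $\#\Lambda(H,X)=\infty$ the action of $H$ on $X$ would be non-elementary and would again contain a loxodromic element; both possibilities contradict the fact that $H$ is parabolic with respect to $X$. Hence $\#\Lambda(H,X)=1$, and since the fixed point $p$ lies in $\Lambda(H,X)$ we conclude $\Lambda(H,X)=\{p\}$. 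Thus every fiber of $\pi$ is a single point, $\pi$ is injective, and by the reduction above it is a $G$-equivariant homeomorphism.

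I expect the last paragraph to be the main obstacle: the real content of the proposition is the identification $\Lambda(H,X)=\{p\}$, i.e.\ that a parabolic subgroup with respect to $X$ has limit set equal to its unique fixed point. The delicate point is ruling out the cases of two and of infinitely many limit points, for which one must invoke that both a two-point limit set (forcing virtual infinite cyclicity) and a non-elementary action produce a loxodromic element; this is precisely where the absence of loxodromic elements in a parabolic subgroup is decisive, and where care is needed in asserting that restricting the convergence action of $G$ to $H$ still yields a convergence action.
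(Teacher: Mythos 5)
Your proposal is correct and follows exactly the route the paper intends: the paper derives this proposition directly from Lemma \ref{equivcont} (1), (3) and (4), with the fibers over conical points handled by (3), the fibers over bounded parabolic points identified with $\Lambda(H,X)$ by (4), and the hypothesis $\H(X)=\H(Y)$ forcing $\Lambda(H,X)$ to be the single fixed point of the parabolic subgroup $H$. Your final paragraph just makes explicit the standard convergence-group facts (Tukia's classification of limit sets and the absence of loxodromics in parabolic subgroups) that the paper leaves implicit.
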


The following gives a sufficient condition that 
a compact metrizable space endowed with a geometrically finite convergence action 
has no proper blow-ups: 
\begin{Prop}\label{noblowup''}
Let $G$ be a countable group. 
Let $X$ be a compact metrizable space endowed with 
a geometrically finite convergence action of $G$. 
If every $H\in \H(X)$ is not virtually cyclic 
and has no non-elementary convergence actions, 
then $X$ has no proper blow-ups. 
\end{Prop}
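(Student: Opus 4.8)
The plan is to show that \emph{any} blow-up of $X$ is in fact realized by a $G$-equivariant homeomorphism onto $X$, so that no proper blow-up can exist. Concretely, suppose $W$ is a compact metrizable space with a minimal non-elementary convergence action of $G$ together with a $G$-equivariant continuous map $\pi : W \to X$; I would prove that $\pi$ is a homeomorphism. By Lemma \ref{equivcont}(1), $\pi$ is already surjective, so it suffices to prove that $\pi$ is injective, i.e.\ that every fibre $\pi^{-1}(q)$ with $q \in X$ is a single point. Once injectivity is established, $\pi$ is a continuous bijection from the compact space $W$ onto the Hausdorff space $X$, hence a homeomorphism, and it is $G$-equivariant by hypothesis.

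To control the fibres I would invoke geometric finiteness of the action on $X$: every point $q \in X$ is either a conical limit point or a bounded parabolic point. If $q$ is a conical limit point, then Lemma \ref{equivcont}(3) gives immediately that $\pi^{-1}(q)$ consists of a single point. The interesting case is a bounded parabolic point $q$ with maximal parabolic subgroup $H \in \H(X)$; here Lemma \ref{equivcont}(4) identifies the fibre as $\pi^{-1}(q) = \Lambda(H, W)$, the limit set of the induced (restricted) convergence action of $H$ on $W$. Thus the whole problem reduces to showing that this limit set is a single point.

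The main step is therefore the cardinality analysis of $\Lambda(H,W)$, which is exactly where the two hypotheses on $H$ enter. Recall that $\#\Lambda(H,W) \in \{0,1,2,\infty\}$. Since $H$ is parabolic it is infinite, so a sequence of distinct elements of $H$ yields a convergence sequence whose repelling and attracting points lie in $\Lambda(H,W)$; hence $\Lambda(H,W) \neq \emptyset$, i.e.\ $\#\Lambda(H,W) \geq 1$. If $\#\Lambda(H,W) = \infty$, then the induced action of $H$ on the closed invariant set $\Lambda(H,W)$ would be a non-elementary convergence action of $H$, contradicting the assumption that $H$ has no non-elementary convergence action. If $\#\Lambda(H,W) = 2$, then $H$ would be virtually infinite cyclic, contradicting the assumption that $H$ is not virtually cyclic. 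Ruling out $0$, $2$ and $\infty$ forces $\#\Lambda(H,W) = 1$, so $\pi^{-1}(q)$ is a single point, which completes the injectivity argument.

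I expect the genuinely substantive point, rather than an outright obstacle, to be the verification that the restricted action of $H$ on $\Lambda(H,W)$ really is a non-elementary convergence action of $H$ to which the hypothesis applies: one must check that $\Lambda(H,W)$ is $H$-invariant and closed (so that it is a legitimate compact metrizable $H$-space with infinitely many points) and that the convergence property for $G$ on $W$ restricts to $H$. Everything else is bookkeeping: the conical/bounded-parabolic dichotomy from geometric finiteness, the two applications of Lemma \ref{equivcont}, and the standard fact that a continuous bijection from a compact space to a Hausdorff space is a homeomorphism. Alternatively, having shown each fibre is a point one could observe that $\H(W)=\H(X)$ and invoke Proposition \ref{noblowup'}, but the direct injectivity route seems shorter.
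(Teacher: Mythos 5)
Your argument is correct and is essentially the paper's proof: the crux in both is that the hypotheses force $\#\Lambda(H,\cdot)=1$ in any blow-up for each $H\in\H(X)$ (ruling out $0$, $2$ and $\infty$ exactly as you do), combined with Lemma \ref{equivcont} (1), (3), (4) and the conical/bounded-parabolic dichotomy from geometric finiteness. The paper merely packages this as ``each $H\in\H(X)$ is parabolic with respect to the blow-up, hence $\H(X)\to\H(Y)$, hence $\H(X)=\H(Y)$ by Lemma \ref{equivcont} (5) and Lemma \ref{order}, now apply Proposition \ref{noblowup'}'' --- precisely the alternative you mention at the end --- rather than running the fibre analysis directly.
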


\begin{proof}
Assume that every $H\in \H(X)$ is not virtually cyclic 
and has no non-elementary convergence actions. 
Let $Y$ be a compact metrizable space endowed 
with a non-elementary convergence action of $G$
and $\pi:Y\to X$ be a $G$-equivariant continuous map.
By the assumption, every $H\in \H(X)$ is a parabolic subgroup with
respect to $Y$ and hence we have $\H(X)\to \H(Y)$. On the other hand
Lemma \ref{equivcont} (5) implies that we have $\H(Y) \to \H(X)$. 
We have $\H(Y)=\H(X)$ by Lemma \ref{order}. 
Hence it follows from Proposition \ref{noblowup'} that $\pi$ is a $G$-equivariant homeomorphism. 
\end{proof}

\begin{Ex}
If $G$ is a non-elementary hyperbolic group, then 
the action on the ideal boundary $\partial G$
is a geometrically finite convergence action 
with no parabolic points 
(see \cite[8.2]{Gro87} and also \cite[Theorem 3.4 and Theorem 3.7]{Fre95}). 
Then there exist no proper blow-ups of $\partial G$ (\cite[Subsection 2.5]{Ger09}). 
We recognize other examples by Proposition \ref{noblowup''}.  
Indeed the limit set of any geometrically finite Kleinian group 
such that the set of maximal parabolic subgroups does not contain virtually cyclic subgroups
has no proper blow-ups because maximal parabolic subgroups are virtually abelian
(refer to \cite[Theorem 2U]{Tuk94}). 
Also if a group $G$ is a free product of finitely many groups which are either 
$\Z^k\ (k \ge 2)$, $\SL(n,\Z)\ (n \ge 3)$ or the mapping class group of an orientable surface 
of genus $g$ with $p$ punctures, where $3g+p \ge 5$, then 
the set of all conjugates of factors of $G$ 
is a peripheral structure of 
a compact metrizable space endowed with a geometrically finite convergence action 
which has no proper blow-ups 
(see \ref{no} and refer to \cite[Remark (II) in Section 7]{M-O-Y3}).  
\end{Ex}

\begin{Lem}\label{phi}
Let $G$ be a countable group
and $X$ and $Y$ be compact metrizable spaces endowed 
with geometrically finite convergence actions of $G$.
Let $\phi:G\to G$ be an automorphism of $G$.
Suppose that $\phi(\H(X))= \H(Y)$. 
Then we have a homeomorphism $\Phi:X\to Y$ 
such that $\Phi(gx)=\phi(g)\Phi(x)$ for any $g\in G$ and any $x\in X$.
Moreover $\phi\cup \Phi:G\cup X\to G\cup Y$ is a homeomorphism.
\end{Lem}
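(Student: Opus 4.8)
The plan is to remove the twist by transporting the automorphism $\phi$ into the action on $Y$, thereby reducing the statement to the rigidity for geometrically finite convergence actions with equal peripheral structures that is recalled in the introduction. First I would define a new action $\beta$ of $G$ on $Y$ by $g\cdot_\beta y:=\phi(g)y$. Since $\phi$ is an automorphism, $\beta$ uses exactly the same set of homeomorphisms of $Y$ as the original action, so $(Y,\beta)$ is again a geometrically finite (hence minimal non-elementary) convergence action with the same limit set; only the labelling of group elements changes. A subgroup $H\le G$ fixes a point for $\beta$ (resp. contains a $\beta$-loxodromic element) if and only if $\phi(H)$ fixes a point (resp. contains a loxodromic element) for the original action, so $H$ is a maximal parabolic subgroup for $\beta$ exactly when $\phi(H)\in\H(Y)$. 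Hence the peripheral structure of $(Y,\beta)$ is $\phi^{-1}(\H(Y))$, and the hypothesis $\phi(\H(X))=\H(Y)$ yields $\H(X)=\phi^{-1}(\H(Y))=\H((Y,\beta))$.

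Next I would invoke the rigidity recalled in the introduction (\cite[Theorem 0.1]{Yam04}, \cite[Theorem 5.2]{Hru10}): two geometrically finite convergence actions of $G$ with the same peripheral structure are $G$-equivariantly homeomorphic. Applied to the actions on $X$ and on $(Y,\beta)$, this produces a homeomorphism $\Phi:X\to Y$ satisfying $\Phi(gx)=g\cdot_\beta\Phi(x)=\phi(g)\Phi(x)$ for all $g\in G$ and $x\in X$, which is precisely the required $\phi$-equivariant homeomorphism.

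For the final assertion I would factor $\phi\cup\Phi$ through the twisted compactification. By Lemma \ref{b'} the action $\beta$ determines a compactification $G\cup_\beta Y$ on which the natural action (left translation on $G$ and $\beta$ on $Y$) is a convergence action with limit set $Y$. Since $\Phi$ is a genuine $G$-equivariant homeomorphism from $X$ onto $(Y,\beta)$, Lemma \ref{equivcont} (6) shows that $id_G\cup\Phi:G\cup X\to G\cup_\beta Y$ is continuous, hence a homeomorphism, being a continuous bijection between compacta. It then remains to compare $G\cup_\beta Y$ with $G\cup Y$: I would check that $\phi\cup id_Y$ carries one onto the other, using the dynamical description of these compactifications, namely that a sequence of distinct group elements converges to a boundary point exactly when, after passing to a subsequence, it is a convergence sequence with that attracting point. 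Concretely, $g_n\to y$ in $G\cup_\beta Y$ means $\phi(g_n)w\to y$ for the original action away from the repelling point, which is exactly $\phi(g_n)\to y$ in $G\cup Y$; thus $\phi\cup id_Y:G\cup_\beta Y\to G\cup Y$ is a homeomorphism, and composing gives $\phi\cup\Phi=(\phi\cup id_Y)\circ(id_G\cup\Phi)$.

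Alternatively, and perhaps more transparently, one can repeat the proof of Lemma \ref{equivcont} (6) verbatim with $\Phi$ in place of $\pi$ and $\phi(g_n)$ in place of $g_n$ on the target: given $g_n\to x$ in $G\cup X$, pass to a subsequence that is simultaneously a convergence sequence for the action on $X$ and for the action of $\{\phi(g_n)\}$ on $Y$, and use $\Phi(gz)=\phi(g)\Phi(z)$ to identify the attracting point on $Y$ with $\Phi(x)$. I expect the main obstacle to lie precisely here, in handling the topologies of $G\cup X$ and $G\cup Y$: everything up to this point is a formal consequence of the twisting construction and the cited rigidity, whereas the last step genuinely requires the characterization of convergence of group elements to boundary points supplied by Lemma \ref{b'} (and \cite{Ger10}).
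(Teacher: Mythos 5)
Your proposal is correct and is essentially the paper's own argument in mirror image: the paper twists the action on $X$ by $\phi^{-1}$ to form $X_\phi$ with $\H(X_\phi)=\phi(\H(X))=\H(Y)$ and then applies the Yaman/Hruska rigidity and Lemma \ref{equivcont} (6), whereas you twist the action on $Y$ by $\phi$ and apply the same two ingredients. The only substantive difference is that you spell out why $\phi\cup id_Y:G\cup_\beta Y\to G\cup Y$ is a homeomorphism (via the uniqueness in Lemma \ref{b'}), a step the paper dismisses as obvious for the corresponding map $\phi\cup id_X$.
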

\begin{proof}
When we denote the action on $X$ by $\rho:G\to \Homeo (X)$, 
$\rho\circ \phi^{-1}:G\to \Homeo (X)$ is another action on $X$. 
We denote the compact space $X$ with the action $\rho\circ \phi^{-1}$
by $X_\phi$. 
Then $\rho\circ \phi^{-1}$ is a geometrically finite convergence action of $G$
and the set of maximal parabolic subgroups is nothing but $\phi(\H(X))$.
Then $id_X:X\to X_\phi$ satisfies that $id_X(\rho(g)x)=\rho\circ\phi^{-1}(\phi(g))id_X(x)$
for any $g\in G$ and any $x\in X$.
Obviously $\phi\cup id_X:G\cup X\to G\cup X_\phi$ is a homeomorphism.
Since $\phi(\H(X))$ is a proper relatively hyperbolic structure, 
there exists a $G$-equivariant homeomorphism $\Phi':X_\phi\to Y$ 
(see \cite[Theorem 0.1]{Yam04} and also \cite[Theorem 5.2]{Hru10}).
Then $id_G\cup \Phi':G\cup X_\phi\to G\cup Y$ is a $G$-equivariant homeomorphism
(Lemma \ref{equivcont} (6)). 
Hence $\Phi:=\Phi'\circ id_X:X\to Y$ satisfies the conditions. 
\end{proof}

The following definition is based on \cite[Subsection 2.4]{Ger09}:
\begin{Def}\label{diag''}
Let $G$ be a countable group and 
$\{X_i\}_{i\in I}$ be a family of 
compact metrizable spaces endowed with minimal non-elementary convergence actions
of $G$, where $I$ is a non-empty countable index set. 
When we consider the diagonal map 
$\Delta:G\to \prod_{i\in I}(G\cup X_i)$ and the closure $\overline{\Delta (G)}$ 
of $\Delta(G)$ in $\prod_{i\in I}(G\cup X_i)$, 
we denote $\overline{\Delta (G)}\setminus \Delta(G)$ by $\bigwedge_{i\in I} X_i$.
\end{Def}

\begin{Lem}\label{diag}
Let $G$ be a countable group and 
$\{X_i\}_{i\in I}$ be a family of 
compact metrizable spaces endowed with minimal non-elementary convergence actions
of $G$, where $I$ is a non-empty countable index set. 
Let $Z$ be a compact metrizable space endowed with a minimal non-elementary convergence action of $G$
and $\pi_i:Z\to X_i$ be a $G$-equivariant continuous map for every $i\in I$. 
Then the image of $\prod \pi_i:Z\to \prod_{i\in I} X_i$ is $\bigwedge_{i\in I} X_i$.  
Also the action of $G$ on $\bigwedge_{i\in I} X_i$ 
is a minimal non-elementary convergence action such that 
$\H(\bigwedge_{i\in I} X_i)=\bigwedge_{i\in I} \H(X_i)$.
In particular $\bigwedge_{i\in I} X_i$ is the smallest common blow-up of all $X_i$.
\end{Lem}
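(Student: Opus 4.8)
The plan is to write $W:=\bigwedge_{i\in I}X_i$ and $p:=\prod_{i\in I}\pi_i\colon Z\to\prod_{i\in I}X_i$, and to prove the three assertions in order, feeding each into the next. First I would record that $W$ is a compact metrizable $G$-invariant subspace of $\prod_{i\in I}X_i$: every point of $\overline{\Delta(G)}\setminus\Delta(G)$ is a limit of some $\Delta(g_n)$ with the $g_n$ distinct, so in each coordinate $\{g_n\}$ leaves every finite set and, since $G$ is discrete in $G\cup X_i$ (Lemma \ref{b'}), the limit lies in $X_i$; moreover $\Delta(G)$ is open in $\overline{\Delta(G)}$ (isolate a single coordinate to $\{g\}$), so $W$ is closed in the compact metrizable space $\overline{\Delta(G)}$. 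For assertion (1) I would use the compactification $G\cup Z$ (Lemma \ref{b'}) together with the continuity of $id_G\cup\pi_i\colon G\cup Z\to G\cup X_i$ (Lemma \ref{equivcont}(6)). Given $z\in Z$, choosing a convergence sequence $\{g_n\}$ with attracting point $z$ gives $g_n\to z$ in $G\cup Z$, hence $g_n\to\pi_i(z)$ in each $G\cup X_i$ and $\Delta(g_n)\to p(z)$, so $p(z)\in W$; conversely a point $(x_i)\in W$ is a limit of $\Delta(g_n)$ with $g_n\to\infty$, and a subsequential limit $z\in Z$ of $\{g_n\}$ in $G\cup Z$ satisfies $\pi_i(z)=x_i$ for all $i$ by continuity, whence $(x_i)=p(z)$. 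Thus $p\colon Z\to W$ is a $G$-equivariant continuous surjection.

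For assertion (2) I would transport the dynamics from $Z$ through the surjection $p$. Given distinct $g_n\in G$, I pass to a convergence sequence for the action on $Z$, with repelling point $r_Z$ and attracting point $a_Z$, and set $r_W:=p(r_Z)$, $a_W:=p(a_Z)$. The key point is that for $w\neq r_W$ no preimage of $w$ equals $r_Z$, so $g_n w=p(g_n z)\to p(a_Z)=a_W$ for any $z\in p^{-1}(w)$; uniform convergence on a compact $C\subset W\setminus\{r_W\}$ then follows because $p^{-1}(C)$ is a compact subset of $Z\setminus\{r_Z\}$ on which $g_n\to a_Z$ uniformly and $p$ is uniformly continuous. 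The symmetric statement for $g_n^{-1}$ produces a convergence sequence in $W$, so the action on $W$ is a convergence action. Minimality is inherited from $Z$ since $p$ is a continuous equivariant surjection of a minimal action, giving $p(\overline{Gz})=\overline{Gw}=W$; and the action is non-elementary because each coordinate projection $q_i\colon W\to X_i$ is onto the infinite space $X_i$, so $W$ is infinite, and a minimal convergence action on an infinite space has limit set equal to the whole space.

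The crux is assertion (3), $\H(W)=\bigwedge_{i\in I}\H(X_i)$, which I would prove by characterizing parabolic subgroups of $W$ through the projections $q_i$. For $\supseteq$, take $P=\bigcap_i H_i$ infinite with $H_i\in\H(X_i)$ fixing $p_i\in X_i$; choosing distinct $g_n\in P$ and using that an infinite subgroup without loxodromic elements has a one-point limit set, I get $g_n\to p_i$ in each $G\cup X_i$, so $w_0:=(p_i)_i\in W$ and $P$ fixes $w_0$. Then $P$ has no loxodromic element on $W$ (such an element has two $W$-fixed points, but it lies in each parabolic $H_i$ and hence has a unique fixed point in each $X_i$, to which both $W$-fixed points project, forcing them to coincide) and fixes only $w_0$ (any fixed point projects to $p_i$ for every $i$), so $P$ is parabolic on $W$ and $P\subset\Stab(w_0)\in\H(W)$; since $\bigwedge_i\H(X_i)$ is an almost malnormal peripheral structure, the infinite containment $P\subset\Stab(w_0)$, with the larger group lying in $\bigwedge_i\H(X_i)$ by the already-established $\subseteq$, forces $P=\Stab(w_0)\in\H(W)$. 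For $\subseteq$, given $P\in\H(W)$, Lemma \ref{equivcont}(5) applied to each $q_i$ yields $H_i\in\H(X_i)$ with $P\subset H_i$, so $P\subset Q:=\bigcap_i H_i\in\bigwedge_i\H(X_i)$; by the $\supseteq$ direction $Q$ is parabolic on $W$, and maximality of $P$ forces $P=Q$. Finally $W$ is a common blow-up via the $q_i$, and it is the smallest: any common blow-up $Z'$ with maps $\pi_i'\colon Z'\to X_i$ satisfies, by assertion (1) applied to $Z'$, that $\prod\pi_i'$ carries $Z'$ onto $W$, exhibiting $Z'$ as a blow-up of $W$.

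I expect the main obstacle to be the $\supseteq$ inclusion in (3): producing the fixed point $(p_i)_i$ inside $W$ and verifying genuine parabolicity — no loxodromic elements and a unique fixed point — purely from the coordinate projections, and then invoking almost malnormality to upgrade the containment $P\subset\Stab(w_0)$ to equality. The convergence-group inputs here, namely that a non-loxodromic infinite subgroup has a one-point limit set and that its elements converge to that point in the compactification, are standard in convergence group theory, but applying them coherently across all coordinates simultaneously is the delicate part.
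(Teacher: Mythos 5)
Your proof is correct and follows essentially the same route as the paper's: identify the image of $\prod_i\pi_i$ via continuity of $id_G\cup\pi_i$ on the compactifications, push the convergence dynamics through the resulting surjection, and compute the peripheral structure by matching tuples of parabolic points with infinite intersections $\bigcap_i H_i$, using the loxodromic/conical dichotomy in each coordinate. The only caveat is expository: in part (3) your two inclusions cite each other, but the dependencies untangle into an acyclic order (first show every infinite $\bigcap_i H_i$ is parabolic on $\bigwedge_i X_i$, then get $\H(\bigwedge_i X_i)\subset\bigwedge_i\H(X_i)$ from Lemma \ref{equivcont}(5) and maximality, then use almost malnormality to upgrade $P\subset\Stab(w_0)$ to equality), which is in substance the paper's appeal to the antisymmetry of $\to$ in Lemma \ref{order}.
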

\begin{proof}
Since $\Delta\cup \prod_{i\in I}\pi_i:G\cup Z\to \prod_{i\in I}(G\cup X_i)$ is continuous
by Lemma \ref{equivcont} (6), 
the image is compact and $\Delta(G)$ is dense in the image. 
Hence the image is equal to $\overline{\Delta(G)}$ and thus the image of 
$Z$ is $\bigwedge_{i\in I} X_i$.

Since $\bigwedge_{i\in I} X_i$ is the image of 
$\prod \pi_i:Z\to \prod_{i\in I} X_i$ and the action of $G$ on $Z$ 
is a minimal non-elementary convergence action, 
the action of $G$ on $\bigwedge_{i\in I} X_i$ is a minimal non-elementary convergence action.
We take $H_i\in \H_i$ for every $i\in I$. 
If $\bigcap_{i\in I}H_i$ is infinite,
then $\{a_i\}_{i\in I}\in \prod_{i\in I}X_i$
is an element of $\bigwedge_{i\in I} X_i$,
where $a_i$ is the parabolic point of $H_i$ for every $i\in I$.
Also $\{a_i\}_{i\in I}$ is fixed by $\bigcap_{i\in I}H_i$.
If $\{a_i\}_{i\in I}$ is fixed by $g\in G$ of infinite order, 
then $a_i$ is fixed by $g$ for every $i\in I$
and thus $g$ belongs to $H_i$ for each $i\in I$. 
Hence $\{a_i\}_{i\in I}$ is a unique fixed point of $g$ and thus 
$g$ is not loxodromic with respect to $\bigwedge_{i\in I} X_i$.
Thus $\bigcap_{i\in I}H_i$ is a parabolic subgroup with respect to $\bigwedge_{i\in I} X_i$. 
In particular we have $\bigwedge_{i\in I} \H(X_i)\to \H(\bigwedge_{i\in I} X_i)$. 
Next we prove $\H(\bigwedge_{i\in I} X_i)\to \bigwedge_{i\in I} \H(X_i)$. 
We take a parabolic point $\{a_i\}_{i\in I}\in\bigwedge_{i\in I} X_i$ 
with the maximal parabolic subgroup $L\in \H(\bigwedge_{i\in I} X_i)$. 
Then $a_i$ is fixed by $L$ for every $i\in I$. 
We assume that there exists a loxodromic element $l\in G$ 
with respect to $X_i$ fixing $a_i$ for some $i\in I$. 
Since $a_i$ is conical, $\{a_i\}_{i\in I}$ is conical 
by Lemma \ref{equivcont} (3). 
This contradicts the fact that any conical limit point is not parabolic 
(\cite[Theorem 3A]{Tuk98}). 
Hence $L$ is a parabolic subgroup with respect to $X_i$ for every $i\in I$. 
Now we have $\H(\bigwedge_{i\in I} X_i)\to \bigwedge_{i\in I} \H(X_i)$ 
and thus $\H(\bigwedge_{i\in I} X_i)= \bigwedge_{i\in I} \H(X_i)$ 
by Lemma \ref{order}.
\end{proof}

\begin{Lem}\label{inverse}
Let $G$ be a countable group. 
Let $(\{X_i\}_{i\in I}, \{\phi_{ij}:X_j\to X_i\}_{j>i})$ 
be a projective system of 
compact metrizable spaces endowed with 
minimal non-elementary convergence actions of $G$
and $G$-equivariant continuous surjections, where $I$ is a non-empty countable directed set. 
Then the projective limit $\underleftarrow{\lim}_{i\in I}X_i$ is a compact metrizable space
with a minimal non-elementary convergence action of $G$. 

Moreover $\underleftarrow{\lim}_{i\in I}X_i$ and $\bigwedge_{i\in I} X_i$ are $G$-equivariant homeomorphic. 

Also if $r=\{r_i\}_{i\in I}\in \underleftarrow{\lim}_{i\in I}X_i$ is a conical limit point, 
then there exists $i\in I$ such that $r_i\in X_i$ is a conical limit point. 
\end{Lem}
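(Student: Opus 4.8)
The plan is to show that $Z:=\underleftarrow{\lim}_{i\in I}X_i$, which is a closed $G$-invariant subspace of the compact metrizable product $\prod_{i\in I}X_i$ (hence itself compact metrizable, with $G$ acting diagonally and continuously since the bonding maps are $G$-equivariant), coincides as a $G$-space with $\bigwedge_{i\in I}X_i$ of Definition \ref{diag''}. Once this identification is in hand, the convergence statements follow via Lemma \ref{diag}, but that lemma needs a minimal non-elementary common blow-up as input, and since the directed system need not have a largest element there is no obvious candidate. So I would first prove \emph{by hand} that $Z$ carries a (not yet known to be minimal) non-elementary convergence action, and only afterwards upgrade to minimality.

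To verify the convergence property directly, take an infinite sequence of distinct $g_n\in G$. For each $i$ the action on $X_i$ yields a convergence subsequence on $X_i$; as $I$ is countable, a diagonal argument produces a single subsequence, still written $\{g_n\}$, that is simultaneously a convergence sequence on every $X_i$, with repelling point $r_i$ and attracting point $a_i$. Using that each $\phi_{ij}$ is $G$-equivariant and continuous (and that one can choose a point of $X_j$ outside $\{r_j\}\cup\phi_{ij}^{-1}(r_i)$, a proper closed set with infinite complement since $\phi_{ij}$ surjects onto the infinite space $X_i$), I obtain $\phi_{ij}(a_j)=a_i$ and $\phi_{ij}(r_j)=r_i$, so that $a:=(a_i)_i$ and $r:=(r_i)_i$ lie in $Z$. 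The main obstacle is the uniform convergence on compacta: given a compact $C\subseteq Z\setminus\{r\}$, I would cover $C$ by finitely many sets $p_e^{-1}(W_e)$ separating points of $C$ from the corresponding $r_e$, where $p_i:Z\to X_i$ denotes the projection, and then invoke directedness to pick one index $j$ dominating these finitely many $e$'s together with any finite set of coordinates dictated by a target accuracy in the product metric. The crucial point is that then $p_j(C)$ is a \emph{compact} subset of $X_j\setminus\{r_j\}$, so that $g_n\to a_j$ uniformly on $p_j(C)$; pushing this through the uniformly continuous maps $\phi_{ij}$ for $i\le j$ controls all relevant coordinates at once, and the symmetric argument for $\{g_n^{-1}\}$ completes the convergence property. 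That $Z$ is infinite and the action non-elementary then follows because the projections $p_i$ are surjective (inverse limit of surjections of compacta over a directed set) and carry $\Lambda(G,Z)$ onto $\Lambda(G,X_i)=X_i$.

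With a non-elementary convergence action established, the induced action on $\Lambda(G,Z)$ is minimal and non-elementary, the restrictions $p_i|_{\Lambda(G,Z)}$ are $G$-equivariant continuous surjections, and the tautological map $\prod_i p_i$ restricted to $\Lambda(G,Z)$ is the inclusion into $\prod_i X_i$. Hence Lemma \ref{diag}, applied with $\Lambda(G,Z)$ as the common blow-up, identifies $\Lambda(G,Z)$ with $\bigwedge_{i\in I}X_i$. It remains to see that $Z$ is minimal, that is $Z=\Lambda(G,Z)$. For this I would show $Z\subseteq\overline{\Delta(G)}$ directly: a basic neighborhood of a point $z=(z_i)_i\in Z$ involves only finitely many coordinates, so by directedness it is controlled by a single index $M$; since the action on $X_M$ is minimal we have $z_M\in X_M=\Lambda(G,X_M)$ with $G$ dense in $G\cup X_M$ (Lemma \ref{b'}), and the uniform continuity of the extended maps $id_G\cup\phi_{iM}$ (Lemma \ref{equivcont} (6)) lets me find $g\in G$ with $\Delta(g)$ in that neighborhood. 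As $z\notin\Delta(G)$, this gives $z\in\bigwedge_{i\in I}X_i$, whence $Z=\bigwedge_{i\in I}X_i=\Lambda(G,Z)$, so the action on $Z$ is minimal; the resulting identification $Z=\bigwedge_{i\in I}X_i$ is precisely the claimed $G$-equivariant homeomorphism.

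Finally, for the statement on conical points, suppose $r=(r_i)_i\in Z$ is conical, witnessed by a convergence sequence $\{g_n\}$ on $Z$ with attracting point $a$ and $g_n r\to b\neq a$. Passing diagonally to a subsequence that is a convergence sequence on each $X_i$, the attracting point on $X_i$ is $p_i(a)=a_i$, while continuity of $p_i$ gives $g_n r_i=p_i(g_n r)\to p_i(b)=b_i$. Since $a\neq b$ in the product, there is an index $i$ with $a_i\neq b_i$, and for this $i$ the sequence $\{g_n\}$ exhibits $r_i$ as a conical limit point of $X_i$, as required.
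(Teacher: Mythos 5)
Your proof is correct, but it takes a more self-contained route than the paper's. For the first assertion the paper simply invokes \cite[Corollary 1 of Proposition P]{Ger09} to get that $\underleftarrow{\lim}_{i\in I}X_i$ carries a minimal non-elementary convergence action, whereas you verify everything by hand: the countable diagonal extraction, the coherence relations $\phi_{ij}(a_j)=a_i$ and $\phi_{ij}(r_j)=r_i$ obtained by testing on a point outside the proper closed set $\{r_j\}\cup\phi_{ij}^{-1}(r_i)$, and the reduction of uniform convergence on a compact $C\subset Z\setminus\{r\}$ (writing $Z$ for the limit as you do) to a single dominating index $j$ with $p_j(C)\subset X_j\setminus\{r_j\}$ all check out, as does the density argument $Z\subset\overline{\Delta(G)}$ that yields minimality. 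For the identification with $\bigwedge_{i\in I}X_i$ the paper argues abstractly: universality of the projective limit gives a $G$-equivariant continuous map $\bigwedge_{i\in I}X_i\to\underleftarrow{\lim}_{i\in I}X_i$, Lemma \ref{diag} (applied with the inverse limit itself as the common blow-up, once the first part is known) gives one in the other direction, and uniqueness of equivariant maps (Lemma \ref{equivcont}(2)) forces the composites to be the identity; you instead show the two spaces are literally the same subset of $\prod_{i\in I}(G\cup X_i)$, by combining Lemma \ref{diag} applied to $\Lambda(G,Z)$ with your density claim. The paper's route is shorter and leans on the quoted literature and on uniqueness; yours costs a page of point-set topology but removes the dependence on the external citation and gives the slightly sharper conclusion that the inverse limit coincides with $\bigwedge_{i\in I}X_i$ on the nose rather than merely up to equivariant homeomorphism. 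Your treatment of conical limit points is identical to the paper's.
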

\begin{proof}
Since $I$ is countable, $\underleftarrow{\lim}_{i\in I}X_i$ is a compact metrizable space.
Then the first part is a special case of \cite[Corollary 1 of Proposition P]{Ger09}. 

By universality of the projective limit, we have a $G$-equivariant continuous map
from $\bigwedge_{i\in I} X_i$ to $\underleftarrow{\lim}_{i\in I}X_i$. 
On the other hand we have a $G$-equivariant continuous map
from  $\underleftarrow{\lim}_{i\in I}X_i$ to $\bigwedge_{i\in I} X_i$ by Lemma \ref{diag}. 
Then $\underleftarrow{\lim}_{i\in I}X_i$ and $\bigwedge_{i\in I} X_i$ are $G$-equivariant homeomorphic
by Lemma \ref{equivcont} (2).

Suppose that $r=\{r_i\}_{i\in I}\in \underleftarrow{\lim}_{i\in I}X_i$ is a conical limit point.
Then we have a point $b=\{b_i\}_{i\in I}\in \underleftarrow{\lim}_{i\in I}X_i$ and 
a convergence sequence $\{g_n\}$ with the attracting point 
$a=\{a_i\}_{i\in I}\in \underleftarrow{\lim}_{i\in I}X_i$
such that $r$ is the repelling point, $g_n(r)$ converges to $b$ and $b$ is not $a$.
There exists $i_0\in I$ such that $b_{i_0}\neq a_{i_0}$. 
Hence $r_{i_0}$ is a conical limit point. 
\end{proof}

\section{Relative hyperbolicity and augmented spaces}\label{aug}
In this section, we recall some terminologies about augmented spaces in \cite{G-M08}
and show some lemmas.

Let $G$ be a countable group and $\mathbb H$ be a finite family of proper infinite subgroups of $G$. 
We denote the family of all left cosets by $\alpha:=\bigcup_{H\in{\mathbb H}}G/H$.
We take a left invariant proper metric $d_G$ on $G$.  
We recall the definition of
the {\it augmented space} $X(G,{\mathbb H},d_G)$ 
(see \cite[Section 3]{G-M08} and also \cite[Section 4]{Hru10}).
We put $\Z_{\ge 0}:=\N\cup\{0\}$.
Its vertex set is 
$G\cup (G\times\{0\})\cup \bigcup_{A\in\alpha}(A\times \Z_{\ge 0})/\sim$, where 
$G\ni g\sim(g,0)\in G\times\{0\}$ for any $g\in G$ and 
$G\times \{0\}\ni(a,0)\sim(a,0)\in A\times \{0\}$ for any $A\in\alpha$ and $a\in A$.
Its edge is either a vertical edge or a horizontal edge: 
a {\it vertical edge} is a pair $\{(a,t_1),(a,t_2)\}\subset A\times \Z_{\ge 0}$
such that $|t_1-t_2|=1$ for $A\in \alpha$; 
a {\it horizontal edge} is a pair $\{(a_1,t),(a_2,t)\}\subset A\times \Z_{\ge 0}$
such that $0<d_G(a_1,a_2)\le 2^t$ for $A\in \alpha$
or a pair $\{g_1,g_2\}\subset G$ such that $0<d_G(g_1,g_2) \le 1$.  
We remark that $G$ is finitely generated relative to $\mathbb H$ if and only if 
there exists a left invariant proper metric $d_G$ on $G$ such that 
the augmented space $X(G,{\mathbb H},d_G)$ is connected.
We suppose that the augmented space $X(G,{\mathbb H},d_G)$ is connected.
Then it has the graph metric $d_{X(G,{\mathbb H},d_G)}$. 
Also $S:=\{g\in G \ |\ 0<d_G(e,g)\le 1\}$ is a {\it finite relatively generating set}, 
that is,  $S$ is a finite subset of $G$ and $G$ is generated by $S\cup\bigcup_{H\in \mathbb H} H$. 
We consider the {\it relative Cayley graph} $\overline{\Gamma}(G,{\mathbb H},S)$. 
Its vertex set is $G$. Its edge is a pair $\{g_1,g_2\}\subset G$
such that $0<d_G(g_1,g_2) \le 1$ or 
a pair $\{a_1,a_2\}\subset A$ such that $a_1\neq a_2$ for $A\in \alpha$. 
Then the relative Cayley graph $\overline{\Gamma}(G,{\mathbb H},S)$ is connected
and has the graph metric. 
When we consider 
a left invariant proper metric $d_H:=d_G|_{H\times H}$ on each $H\in{\mathbb H}$, 
$X(H,\{H\},d_H)$ is identified with the full
subgraph of $H\times \Z_{\ge 0}$ in $X(G,{\mathbb H},d_G)$. 
For any $A=gH\in \alpha$, 
the full subgraph of $A\times \Z_{\ge 0}$ in $X(G,{\mathbb H},d_G)$ is 
$gX(H,\{H\},d_H)\subset X(G,{\mathbb H},d_G)$, 
which is called a {\it combinatorial horoball} on $A$ in $X(G,{\mathbb H},d_G)$.
We take $A=gH\in \alpha$ and $g_1, g_2\in A$ such that $g_1\neq g_2$. 
We consider a path $\gamma$ in 
$X(G,{\mathbb H},d_G)$ from $g_1$ to $g_2$
that is contained in the combinatorial horoball 
$gX(H,\{H\},d_G)$ as follows: 
if $0<d_G(g_1,g_2)\le 1$, then $\gamma$ is a path of one edge from 
$(g_1,0)\in gH\times \Z_{\ge 0}$ to $(g_2,0)\in gH\times \Z_{\ge 0}$; 
if $2^{n-1}<d_G(g_1,g_2)\le2^{n}$ for some $n\in \N$, then $\gamma$
runs on vertices of $gH\times \Z_{\ge 0}$
\begin{align*}
&(g_1,0), (g_1,1), (g_1,2), \cdots, \\
&(g_1,n-1), (g_1,n), (g_2,n),(g_2,n-1), \\ 
&\cdots,(g_2,2), (g_2,1) , (g_2,0)
\end{align*}
in order. We call such a path the {\it $H$-typical quasigeodesic} from $g_1$ to $g_2$.
Indeed we can easily confirm that 
there exist constants $\mu_0\ge 1$ and $C_0\ge 0$ 
such that every $H$-typical quasigeodesic is a 
$(\mu_0, C_0)$-quasigeodesic in $X(G,{\mathbb H},d_G)$ for every $H\in \mathbb H$.

We summarize some known facts as follows 
(see for example \cite[Sections 3, 4, 5]{Hru10}): 
\begin{Thm}\label{s}
Let $G$ be a countable group and $\mathbb H$ be a finite family of proper infinite subgroups of $G$.    
Then $G$ has a compact metrizable space $X$ endowed 
with a geometrically finite convergence action of $G$ 
such that $\mathbb H$ is a set of representatives of conjugacy classes of $\H(X)$
if and only if $G$ is not virtually cyclic and there exists a left invariant proper metric $d_G$ on $G$
such that $X(G,{\mathbb H},d_G)$ is connected and hyperbolic. 
When $X(G,{\mathbb H},d_G)$ is connected and hyperbolic 
for a left invariant proper metric $d_G$ on $G$, 
the action of $G$ on the ideal boundary $\partial X(G,{\mathbb H},d_G)$ 
is a geometrically finite convergence action such that $\mathbb H$ 
is a set of representatives of conjugacy classes of $\H(\partial X(G,{\mathbb H},d_G))$
and also the action of $G$ on $X(G,{\mathbb H},d_G)\cup \partial X(G,{\mathbb H},d_G)$ 
is a convergence action
whose limit set is $\partial X(G,{\mathbb H},d_G)$.
\end{Thm}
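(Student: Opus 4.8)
The plan is to treat the two directions of the equivalence separately and to extract the explicit description of the boundary action as a byproduct of the ``if'' direction. In both directions the central object is the ideal boundary $\partial X(G,\mathbb{H},d_G)$ of the augmented space: for the ``if'' direction I would build a geometrically finite convergence action on it directly from the coarse geometry of $X(G,\mathbb{H},d_G)$, while for the ``only if'' direction I would recover a connected hyperbolic augmented space from an abstract geometrically finite action by invoking the characterization of relative hyperbolicity via such actions.

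For the ``if'' direction, which also yields the final two assertions, I first observe that when $X(G,\mathbb{H},d_G)$ is connected it is a locally finite graph: properness of $d_G$ makes each ball finite, so only finitely many horizontal edges issue from any vertex, while only the $|\mathbb{H}|$ cosets through a level-$0$ vertex contribute vertical edges. Hence $X(G,\mathbb{H},d_G)$ is a proper geodesic space on which $G$ acts by isometries with trivial vertex stabilizers, so the action is properly discontinuous. Assuming hyperbolicity, the standard theory of groups acting properly discontinuously by isometries on proper hyperbolic spaces (see \cite{Tuk94}, \cite{Bow99b}) shows that the induced action on $\partial X(G,\mathbb{H},d_G)$ is a convergence action and that the action on $X(G,\mathbb{H},d_G)\cup\partial X(G,\mathbb{H},d_G)$ is a convergence action whose limit set is $\partial X(G,\mathbb{H},d_G)$. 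Since $G$ is infinite and not virtually cyclic, the classification of limit-set cardinalities (\cite{Tuk94}) forces $\#\partial X(G,\mathbb{H},d_G)=\infty$, so the action is non-elementary.

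The core of the argument is the dynamical classification of boundary points. Fixing a level-$0$ basepoint, I would show that any point which is the endpoint of a geodesic ray whose intersection with the orbit $G\cdot e$ is unbounded is a conical limit point, and that every boundary point other than the horoball apices is of this type. For the apices, each combinatorial horoball $gX(H,\{H\},d_H)$ over $A=gH\in\alpha$ has exactly one point of $\partial X(G,\mathbb{H},d_G)$ in its closure, the common endpoint of its vertical rays; this apex is fixed by the conjugate $gHg^{-1}$, which contains no loxodromic element and so acts as a parabolic subgroup. Using the $H$-typical quasigeodesics to join points of $A$ through the horoball, together with hyperbolicity, I would show that $gHg^{-1}$ acts cocompactly on the complement of the apex, so the apex is a bounded parabolic point. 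This exhibits every boundary point as either conical or bounded parabolic and identifies the conjugates of the members of $\mathbb{H}$ with the maximal parabolic subgroups, so $\mathbb{H}$ represents the conjugacy classes of $\H(\partial X(G,\mathbb{H},d_G))$.

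For the ``only if'' direction, suppose $G$ has a geometrically finite convergence action on a compactum with $\mathbb{H}$ a set of conjugacy-class representatives of the maximal parabolic subgroups. Such an action is non-elementary, as recalled in Section \ref{convact}, so $G$ is infinite and, by the cardinality classification, not virtually cyclic. By the characterization of relative hyperbolicity through geometrically finite convergence actions (\cite[Theorem 0.1]{Yam04}, and see also \cite{Bow12}), $G$ is hyperbolic relative to $\mathbb{H}$; in particular it is finitely generated relative to $\mathbb{H}$, and the augmented-space construction of Groves--Manning and Hruska (\cite{G-M08}, \cite[Sections 3--5]{Hru10}) provides a left invariant proper metric $d_G$ for which $X(G,\mathbb{H},d_G)$ is connected and hyperbolic. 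The step I expect to be the main obstacle is the verification of bounded parabolicity in the preceding paragraph, namely the cocompactness of the peripheral action on the complement of the apex, since this is precisely where the explicit geometry of the combinatorial horoballs and the $H$-typical quasigeodesics must be coupled to hyperbolicity; the conical classification and the extension to a boundary convergence action are comparatively standard, and in the end the whole statement is assembled from \cite{Yam04}, \cite{G-M08} and \cite[Sections 3--5]{Hru10}.
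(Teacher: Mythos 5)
The paper does not actually prove Theorem \ref{s}: it is introduced verbatim as a summary of known facts, with the argument delegated to \cite[Sections 3, 4, 5]{Hru10} (resting in turn on \cite{G-M08}, \cite{Bow12} and \cite{Yam04}). Your plan is a faithful reconstruction of exactly that route -- the ``if'' direction via the dynamical classification of boundary points of the combinatorial cusped space (points not in the closure of a single horoball are conical, horoball apices are bounded parabolic with stabilizers the conjugates of members of $\mathbb{H}$), and the ``only if'' direction via Yaman's topological characterization of relative hyperbolicity followed by the Groves--Manning/Hruska construction of a connected hyperbolic augmented space. You also correctly isolate the genuinely technical step, namely cocompactness of the peripheral subgroup on the complement of its apex. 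Two points you should make explicit in a full write-up, though neither is a structural gap: (i) the exclusion of the elementary cases for the boundary action -- a one-point limit set would make the whole group parabolic rather than geometrically finite, and ruling this out is where properness of the subgroups in $\mathbb{H}$ and the non-virtually-cyclic hypothesis actually enter, not merely the two-point case; and (ii) the verification that the stabilizer of an apex is exactly $gHg^{-1}$ and contains no loxodromic element (for instance because elements of $H$ have vanishing stable translation length in the cusped space, since they translate the deep levels of the horoball by arbitrarily small amounts).
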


Let $G$ be a countable group and $\mathbb H$ be a finite family of proper infinite subgroups of $G$. 
Suppose that $G$ is not virtually cyclic and has a left invariant proper metric $d_G$ on $G$
such that $X(G,{\mathbb H},d_G)$ is connected and hyperbolic. 
We consider the following maps:
\begin{align*}
i_{\mathbb H}&:G\to X(G,{\mathbb H},d_G);\\
\pi_{\mathbb H}&:X(G,{\mathbb H},d_G)\to \overline{\Gamma}(G,{\mathbb H},S),
\end{align*}
where $i_{\mathbb H}$ is the natural embedding and 
$\pi_{\mathbb H}$ is the natural surjection.
Then $\pi_{\mathbb H}\circ i_{\mathbb H}$ is also the natural embedding.

\begin{Lem}\label{b}
Let $G$ be a countable group and $\mathbb H$ be a finite family of proper infinite subgroups of $G$. 
Suppose that $G$ is not virtually cyclic and has a left invariant proper metric $d_G$ on $G$
such that $X(G,{\mathbb H},d_G)$ is connected and hyperbolic. 
We consider the subset 
$i_{\mathbb H}(G)\cup \partial X(G,{\mathbb H},d_G)\subset X(G,{\mathbb H},d_G)\cup\partial X(G,{\mathbb H},d_G)$.
Then the action of $G$ on $i_{\mathbb H}(G)\cup \partial X(G,{\mathbb H},d_G)$ 
is a convergence action
whose limit set is $\partial X(G,{\mathbb H},d_G)$. 
\end{Lem}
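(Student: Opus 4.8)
The plan is to exhibit $W:=i_{\mathbb H}(G)\cup\partial X(G,{\mathbb H},d_G)$ as a closed, hence compact, $G$-invariant subset of $\bar X:=X(G,{\mathbb H},d_G)\cup\partial X(G,{\mathbb H},d_G)$, and then to transport the convergence action furnished by Theorem \ref{s} to $W$ by mere restriction. To begin, I would note that $W$ is $G$-invariant: the embedding $i_{\mathbb H}$ is $G$-equivariant, so $i_{\mathbb H}(G)=G\times\{0\}$ (the level-zero vertices) is invariant, and $\partial X(G,{\mathbb H},d_G)$ is invariant as a limit set. It is also infinite, since $G$ is infinite and, by Theorem \ref{s}, the action on $\partial X(G,{\mathbb H},d_G)$ is non-elementary.

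The geometric input is that $W$ is closed in $\bar X$. Since $X(G,{\mathbb H},d_G)$ is a connected, locally finite graph, its graph metric is proper, so $i_{\mathbb H}(G)$ is a discrete closed subset in which every bounded set is finite. Consequently any sequence of pairwise distinct level-zero vertices leaves every metric ball and therefore accumulates only on $\partial X(G,{\mathbb H},d_G)$; in particular no such sequence can converge to a horoball vertex $(a,t)$ with $t\ge 1$ or to an interior point of an edge, each of which is at positive distance from $i_{\mathbb H}(G)$. Combining this with the fact that $\partial X(G,{\mathbb H},d_G)$ is closed in $\bar X$, one checks that every accumulation point in $\bar X$ of a sequence drawn from $W$ again lies in $i_{\mathbb H}(G)\cup\partial X(G,{\mathbb H},d_G)=W$. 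Hence $W$ is closed, and being a closed subset of the compactum $\bar X$ it is itself compact and metrizable.

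It then remains to transfer the convergence property. By Theorem \ref{s} the action of $G$ on $\bar X$ is a convergence action with limit set $\partial X(G,{\mathbb H},d_G)$, and by \cite[Lemma 2M]{Tuk94} the repelling and attracting points of every convergence sequence belong to this limit set, thus to $\partial X(G,{\mathbb H},d_G)\subseteq W$. Given an infinite sequence of mutually distinct elements of $G$, I would extract a convergence subsequence $\{g_{i_j}\}$ for the action on $\bar X$ with repelling point $r$ and attracting point $a$; then $r,a\in W$. Since any compact subset of $W\setminus\{r\}$ is a compact subset of $\bar X\setminus\{r\}$, the uniform convergence of $g_{i_j}$ to $a$ on such subsets, and symmetrically of $g_{i_j}^{-1}$ to $r$ on compacta of $W\setminus\{a\}$, is inherited from the action on $\bar X$. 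Thus $\{g_{i_j}\}$ is a convergence sequence for the action on $W$ with the same repelling and attracting points, and the action on $W$ is a convergence action. Its limit set, being the set of all such repelling and attracting points, is contained in $\partial X(G,{\mathbb H},d_G)$; conversely every point of $\partial X(G,{\mathbb H},d_G)$ is a repelling or attracting point of a convergence sequence for the action on $\bar X$ whose witnessing sequence restricts to $W$, so the limit set equals $\partial X(G,{\mathbb H},d_G)$.

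I expect the only genuine difficulty to be the compactness of $W$, that is, verifying that group elements cannot escape to positive-level horoball vertices; this is precisely where local finiteness and properness of the augmented space must be invoked, whereas the passage of both the convergence property and the limit set to a closed invariant subspace is formal.
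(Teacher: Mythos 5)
Your proof is correct, and it shares the paper's backbone --- Theorem \ref{s} provides the convergence action on $X(G,{\mathbb H},d_G)\cup\partial X(G,{\mathbb H},d_G)$ and one restricts to the closed invariant subset $W=i_{\mathbb H}(G)\cup\partial X(G,{\mathbb H},d_G)$ --- but the step each argument makes explicit is different. You spend your effort on closedness of $W$ (via local finiteness and properness of the augmented graph, so that distinct orbit points leave every ball and can accumulate only on the boundary) and then identify the limit set formally: attracting and repelling points of convergence subsequences persist under restriction to a closed invariant subset containing the limit set, and \cite[Lemma 2M]{Tuk94} converts this into the equality of limit sets. The paper takes the compactness and the restriction for granted and instead proves the one fact it needs later, namely that every point of $\partial X(G,{\mathbb H},d_G)$ is an accumulation point of the orbit $i_{\mathbb H}(G)$: properness of the action makes the orbit unbounded, hence it accumulates at some boundary point, and minimality of the boundary action spreads this to all of $\partial X(G,{\mathbb H},d_G)$. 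The two routes are logically interchangeable here --- your limit-set computation also yields the density of $G$ in $W$ a posteriori, since each attracting point is the limit of $g_{i_j}e=g_{i_j}$ --- but the paper's phrasing delivers the compactification statement of Remark \ref{b''} (and hence the input to Lemma \ref{c}) directly, whereas yours is the more self-contained verification that $W$ is actually a compactum to which the definition of a convergence action applies.
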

\begin{proof}
We prove that 
any point of $\partial X(G,{\mathbb H},d_G)$ is an accumulation point of $i_{{\mathbb H}}(G)$ 
in the compact space $X(G,{\mathbb H},d_G)\cup \partial X(G,{\mathbb H},d_G)$. 
Since the action of $G$ on $X(G,{\mathbb H},d_G)$ is proper, 
$i_{{\mathbb H}}(G)$ is unbounded in $X(G,{\mathbb H},d_G)$.  
Hence we have a point $p$ of $\partial X(G,{\mathbb H},d_G)$ 
that is an accumulation point of $i_{{\mathbb H}}(G)$. 
Since the action of $G$ on $\partial X(G,{\mathbb H},d_G)$ is minimal, 
every point of $\partial X(G,{\mathbb H},d_G)$ is an accumulation point of $i_{{\mathbb H}}(G)$.
\end{proof}
\begin{Rem}\label{b''}
We note that $i_{\mathbb H}(G)\cup \partial X(G,{\mathbb H},S)$ induces 
a compactification $G\cup \partial X(G,{\mathbb H},S)$ of $G$. 
The notations $G\cup \partial X(G,{\mathbb H},S)$ on the above 
and $G\cup \partial X(G,{\mathbb H},S)$ in Lemma \ref{b'}
are compatible by Lemma \ref{b'}. 
\end{Rem}

We identify 
$i_{\mathbb H}(G)$ and $\pi_{\mathbb H}\circ i_{\mathbb H}(G)$ with $G$
and also identify 
$i_{\mathbb H}(g)$, $\pi_{\mathbb H}\circ i_{\mathbb H}(g)$ with $g$ 
for every $g\in G$.
For any locally minimal path $\gamma$ without backtracking in $\overline{\Gamma}(G,{\mathbb H},S)$, 
by replacing every $H$-component of $\gamma$ with the $H$-typical quasigeodesic 
for every $H\in \mathbb H$, 
we have a path $\gamma'$ in $X(G,{\mathbb H},d_G)$, which we call the {\it typical lift} of $\gamma$. 
See \cite{Osi06a} about locally minimal paths without backtracking.
The following is a special case of \cite[Lemma 7.3]{Bow12} (see also \cite[Lemma 6.8]{Hru10}):
\begin{Lem}\label{d}
Let $G$ be a countable group and $\mathbb H$ be a finite family of proper infinite subgroups of $G$. 
Suppose that $G$ is not virtually cyclic and has a left invariant proper metric $d_G$ on $G$
such that $X(G,{\mathbb H},d_G)$ is connected and hyperbolic. 
For any constants $\mu\ge 1$ and $C\ge 0$, 
there exist constants $\mu'\ge 1$ and $C'\ge 0$ 
such that the typical lift of any locally minimal $(\mu,C)$-quasigeodesic without backtracking 
in $\overline{\Gamma}(G,{\mathbb H},S)$ is a $(\mu',C')$-quasigeodesic in 
$X(G,{\mathbb H},d_G)$. 
\end{Lem}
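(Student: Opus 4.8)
Write $X:=X(G,\mathbb{H},d_G)$ and $\overline{\Gamma}:=\overline{\Gamma}(G,\mathbb{H},S)$, let $d_X$ and $d_{\overline{\Gamma}}$ denote their graph metrics, and let $\delta$ be a hyperbolicity constant for $X$. The plan is to deduce the statement from the local-to-global stability of quasigeodesics in hyperbolic spaces: for a $\delta$-hyperbolic geodesic space and fixed $\lambda\ge 1,\epsilon\ge 0$ there is a scale $L=L(\delta,\lambda,\epsilon)$ such that every $L$-local $(\lambda,\epsilon)$-quasigeodesic is a global $(\mu',C')$-quasigeodesic, with $\mu',C'$ depending only on $\delta,\lambda,\epsilon$. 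Thus it suffices to exhibit uniform local constants $\lambda,\epsilon$, depending only on $\mu,C,\mu_0,C_0,\delta$, for which the typical lift $\gamma'$ is an $L$-local $(\lambda,\epsilon)$-quasigeodesic in $X$.

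First I would record the anatomy of $\gamma'$. By construction it is a concatenation of two kinds of pieces: the level-$0$ segments coming from the $S$-edges of $\gamma$, which are paths inside $G\times\{0\}\subset X$, and the $H$-typical quasigeodesics replacing the $H$-components of $\gamma$, which are uniformly $(\mu_0,C_0)$-quasigeodesic in $X$ (these being the constants $\mu_0,C_0$ fixed when the $H$-typical quasigeodesics were introduced). The transition vertices between consecutive pieces all lie at level $0$ and are exactly the $G$-vertices visited by $\gamma$; the local minimality of $\gamma$ ensures that its level-$0$ segments are themselves efficient, so no length is wasted at the bottom.

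The local estimate then rests on controlling $\gamma'$ at each transition vertex $v$. The key geometric input is the no-backtracking hypothesis: distinct components of $\gamma$ lie in distinct cosets, so the excursions of $\gamma'$ enter pairwise distinct combinatorial horoballs, and two horoballs on distinct cosets meet only near level $0$. Consequently the incoming and outgoing pieces at $v$ leave $v$ in uniformly divergent directions, i.e. the Gromov product $(\sigma_-\mid\sigma_+)_v$ at each junction is bounded by a constant depending only on $\delta$. Combining this tautness at every junction with the uniform quasigeodesity of the individual pieces and with the fact that $\gamma$ is a $(\mu,C)$-quasigeodesic in $\overline{\Gamma}$ — which forces the level-$0$ vertices of $\gamma'$ to make definite cumulative $d_{\overline{\Gamma}}$-progress — I would show that any subpath of bounded length is a $(\lambda,\epsilon)$-quasigeodesic with $\lambda,\epsilon$ independent of the subpath. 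The local-to-global principle then upgrades $\gamma'$ to a global $(\mu',C')$-quasigeodesic with the asserted dependence of the constants.

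The main obstacle is precisely the exclusion of destructive backtracking, made quantitative. A priori two deep excursions of $\gamma'$ could shadow one another and render $\gamma'$ far longer than the distance between its endpoints; no-backtracking is exactly what forbids this, since without it a single excursion into a repeated horoball would shortcut two and $\gamma'$ would genuinely fail to be a quasigeodesic. The delicate point within the local estimate is the case in which a short level-$0$ segment (say a single $S$-edge) is sandwiched between two deep excursions: here junction tautness alone is not enough, and one must use the distance structure of combinatorial horoballs together with $\overline{\Gamma}$-progress to see that the two excursions still contribute additively to $d_X$ up to bounded error. This is the heart of \cite[Lemma 7.3]{Bow12} and \cite[Lemma 6.8]{Hru10}, and where the hyperbolicity of $X$ is used in full.
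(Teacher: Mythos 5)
The paper does not prove this lemma at all: it is stated as ``a special case of \cite[Lemma 7.3]{Bow12} (see also \cite[Lemma 6.8]{Hru10})'' and the authors simply import it. Your proposal therefore cannot be compared with a proof in the paper; what it should be compared with is the argument in those references, and in spirit it matches them. The strategy you describe --- uniform quasigeodesity of the individual pieces, additivity of successive horoball excursions forced by no-backtracking, progress in $\overline{\Gamma}(G,{\mathbb H},S)$ coming from the $(\mu,C)$-quasigeodesity of the original path, and a local-to-global upgrade in the hyperbolic space $X(G,{\mathbb H},d_G)$ --- is the standard and correct route.

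As a standalone proof, however, your text is a strategy rather than an argument, and you concede as much by deferring ``the heart'' to the very lemmas the paper cites. If you want to close the gap, the junction analysis should not be phrased as a soft statement about ``uniformly divergent directions''; it should rest on three concrete estimates. First, distinct combinatorial horoballs in $X(G,{\mathbb H},d_G)$ meet only along level $0$, so any path between a point at depth $t_1$ in one horoball and a point at depth $t_2$ in a different horoball has length at least $t_1+t_2$; this is what makes consecutive excursions contribute additively (and is where no-backtracking enters, since it guarantees the excursions lie in pairwise distinct cosets). Second, the projection $\pi_{\mathbb H}$ does not increase length, so $d_{\overline{\Gamma}(G,{\mathbb H},S)}(g,g')\le d_{X(G,{\mathbb H},d_G)}(g,g')$ for $g,g'\in G$; this converts $\overline{\Gamma}$-progress of $\gamma$ into a lower bound in $X$ for subpaths made of many shallow pieces. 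Third, one needs a bound of the form $d_{X(G,{\mathbb H},d_G)}(g,g')\ge 2\log_2 d_G(g,g')-c$ to handle a subpath whose length is dominated by a single deep excursion between endpoints that are only a few $\overline{\Gamma}$-edges apart; without it, your ``sandwich'' discussion does not cover the case where the deep excursion sits strictly inside the subwindow. All three statements are true and elementary for augmented spaces, so your approach goes through, but none of them is actually established in the proposal.
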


\begin{Lem}\label{e}
Let $G$ be a countable group and $\mathbb H$ be a finite family of proper infinite subgroups of $G$. 
Suppose that $G$ is not virtually cyclic and has a left invariant proper metric $d_G$ on $G$ 
such that $X(G,{\mathbb H},d_G)$ is connected and hyperbolic. 
Then for each pair of constants $(\mu,C)$ with $\mu \ge 1$ and $C \ge 0$, 
there exists a non-decreasing function $f_1=f_1^{(\mu,C)}:[0,\infty)\to [0,\infty)$ 
such that $f_1(N)\to \infty$ as $N\to \infty$ and $f_1$ satisfies the following: 
for any $A=gH\in \alpha$, any $g_1,g_2\in A$ such that $g_1\neq g_2$, 
any $(\mu,C)$-quasigeodesic $\gamma$ in $X(G,{\mathbb H},d_G)$ from $g_1$ to $g_2$, 
and any $N>0$, if $d_{X(G,{\mathbb H},d_G)}(e,g_i)\ge N$ for $i=1,2$, 
then $d_{X(G,{\mathbb H},d_G)}(e,\gamma)\ge f_1(N)$. 
\end{Lem}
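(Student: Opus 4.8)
The plan is to reduce the arbitrary $(\mu,C)$-quasigeodesic $\gamma$ to the explicit $H$-typical quasigeodesic joining $g_1$ and $g_2$, and then to bound the latter's distance from $e$ by combining two elementary estimates inside the combinatorial horoball over $A$. Fix the $H$-typical quasigeodesic $\gamma_0$ from $g_1$ to $g_2$; by the fact recorded just before Lemma \ref{d} it is a $(\mu_0,C_0)$-quasigeodesic, and it lies entirely in the combinatorial horoball on $A$: it ascends along $\{g_1\}\times\Z_{\ge 0}$ from $(g_1,0)$ to height $n$, crosses by one horizontal edge to $(g_2,n)$, and descends to $(g_2,0)$, where $n$ is determined by $2^{n-1}<d_G(g_1,g_2)\le 2^n$ (and $n=0$ when $d_G(g_1,g_2)\le 1$). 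Since $X(G,{\mathbb H},d_G)$ is a connected hyperbolic graph, the Morse lemma (stability of quasigeodesics) provides a constant $R=R(\mu,C,\mu_0,C_0)$, depending only on $(\mu,C)$ and the hyperbolicity constant, such that both $\gamma$ and $\gamma_0$ lie within Hausdorff distance $R$ of a geodesic from $g_1$ to $g_2$; in particular every point of $\gamma$ lies within $2R$ of $\gamma_0$. Hence $d_{X(G,{\mathbb H},d_G)}(e,\gamma)\ge d_{X(G,{\mathbb H},d_G)}(e,\gamma_0)-2R$, and it suffices to bound $d_{X(G,{\mathbb H},d_G)}(e,\gamma_0)$ from below.

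Next I would estimate $d_{X(G,{\mathbb H},d_G)}(e,(g_j,s))$ at the vertices $(g_j,s)$ of $\gamma_0$ (with $0\le s\le n$ and $j=1,2$) by two independent bounds. The first is a vertical-projection bound: since $d_{X(G,{\mathbb H},d_G)}((g_j,s),(g_j,0))\le s$ (descend vertically), the triangle inequality gives $d_{X(G,{\mathbb H},d_G)}(e,(g_j,s))\ge d_{X(G,{\mathbb H},d_G)}(e,g_j)-s\ge N-s$. The second is a height bound: introduce the height function assigning to each vertex its $\Z_{\ge 0}$-coordinate (this is well defined because the identifications $\sim$ occur only at height $0$), and observe that every edge of $X(G,{\mathbb H},d_G)$ changes the height by at most $1$ (vertical edges by exactly $1$, horizontal edges by $0$); since $e$ has height $0$ and $(g_j,s)$ has height $s$, any path between them has at least $s$ edges, whence $d_{X(G,{\mathbb H},d_G)}(e,(g_j,s))\ge s$. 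Combining the two, $d_{X(G,{\mathbb H},d_G)}(e,(g_j,s))\ge\max(N-s,s)\ge N/2$ for every such vertex, and points interior to the edges of $\gamma_0$ lose at most $1/2$; thus $d_{X(G,{\mathbb H},d_G)}(e,\gamma_0)\ge N/2-1$.

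Putting these together yields $d_{X(G,{\mathbb H},d_G)}(e,\gamma)\ge N/2-2R-1$, so it suffices to set $f_1^{(\mu,C)}(N):=\max\{0,\,N/2-2R-1\}$, which is non-decreasing and tends to $\infty$ as $N\to\infty$, as required.

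The genuinely essential observation is the use of the height function to control points lying high in the horoball: the vertical-projection bound alone degenerates for $s$ close to $n$ (which can far exceed $N$ when $d_G(g_1,g_2)$ is enormous), and it is precisely the complementary height bound that rescues a uniform lower bound of order $N$. The only other place where hyperbolicity enters is the reduction to the explicit typical quasigeodesic via the Morse lemma; once that uniform replacement is in place the remaining estimates are elementary features of combinatorial horoballs. Accordingly, I expect the main care to go into fixing the stability constant $R$ and verifying that it depends only on $(\mu,C)$ and the hyperbolicity constant, and in particular not on the coset $A$ or on the chosen pair $g_1,g_2$, so that the single function $f_1^{(\mu,C)}$ works simultaneously for all $A\in\alpha$ and all such pairs.
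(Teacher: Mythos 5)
Your argument is correct, and its outer skeleton matches the paper's: both proofs reduce the arbitrary $(\mu,C)$-quasigeodesic to the $H$-typical quasigeodesic $\gamma_0$ by stability of quasigeodesics in the hyperbolic space $X(G,{\mathbb H},d_G)$, using that the constants $\mu_0,C_0$ are uniform over all $H\in\mathbb H$ (so the stability constant is independent of $A$ and of $g_1,g_2$). Where you genuinely diverge is in the core estimate of $d_{X(G,{\mathbb H},d_G)}(e,\gamma_0)$. The paper does not use a height function: it fixes a vertex $(a,0)$ of $A\times\{0\}$ nearest to $e$, uses quasiconvexity of the combinatorial horoball to force every geodesic from $e$ into the horoball to pass within a uniform distance $c_2$ of $(a,0)$, estimates $d(a,v)$ for vertices $v$ of $\gamma_0$ via the $(\mu_0,C_0)$-quasigeodesic property of the typical paths from $a$ to $g_i$, and finishes with a case split on the size of $d(e,a)$, obtaining a bound of order $N/(6\mu_0)$. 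Your combination of the vertical-descent bound $d(e,(g_j,s))\ge N-s$ with the observation that the height function is $1$-Lipschitz along edges (so $d(e,(g_j,s))\ge s$) is both more elementary and sharper: it invokes hyperbolicity only once (for the Morse lemma) rather than additionally for quasiconvexity of horoballs and the gate property, and it yields a lower bound of order $N/2$. Your justification that the height function is well defined (identifications occur only at height $0$) and that every edge changes height by at most $1$ is accurate, so the proposal stands as a complete and somewhat cleaner proof of the lemma.
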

\begin{proof}
There exist constants $\mu_0\ge 1$ and $C_0\ge 0$ such that 
any $H$-typical quasigeodesic is a $(\mu_0,C_0)$-quasigeodesic in $X(G,{\mathbb H},d_G)$ 
for every $H\in \mathbb H$.

We put $d_H:=d_G|_{H\times H}$. 
There exists a constant $c_1\ge 0$ such that 
$gX(H,\{H\},d_H)$ is a $c_1$-quasiconvex subgraph 
of $X(G,{\mathbb H},d_G)$ for every $A=gH\in \alpha$ 
because $X(G,{\mathbb H},d_G)$ is hyperbolic 
and we can easily confirm that there exist constants $\mu'_1\ge 1, c'_1\ge 0$ such that 
$gX(H,\{H\},d_H)$ is a $(\mu'_1, c'_1)$-quasi-isometrically embedded subgraph 
of $X(G,{\mathbb H},d_G)$ for every $A=gH\in \alpha$. 

We fix a vertex $(a, 0)\in A\times \{0\}$ that is one of the nearest vertices from $e$
for every $A\in \alpha$. 
Then there exists a constant $c_2\ge 0$ such that 
any geodesic from $e$ to any vertex in $gX(H,\{H\},d_H)$
intersects the $c_2$-neighborhood of $(a,0)$ in $X(G,{\mathbb H},d_G)$
for any $A=gH\in \alpha$
because $X(G,{\mathbb H},d_G)$ is hyperbolic and 
$gX(H,\{H\},d_H)$ is $c_1$-quasiconvex for any $A=gH\in \alpha$.

We take $A=gH\in \alpha$, $g_1,g_2\in A$ such that $g_1\neq g_2$. 
We denote the $H$-typical quasigeodesic from $g_1$ to $g_2$ by $\gamma_0$. 
Now we claim that 
\begin{eqnarray*}
d_{X(G,{\mathbb H},d_G)}(e,v) \ge 
\frac{1}{\mu_0}\min_{i=1,2}\frac{1}{2}d_{X(G,{\mathbb H},d_G)}(a,g_i)-\frac{C_0}{\mu_0}-2c_2
\end{eqnarray*}
for any vertex $v$ of $\gamma_0$. 
Indeed we have $v=(g_i,m)$ for some $i \in \{1,2\}$ and some $m \ge 0$. 
We fix $i$ on the above and denote the $H$-typical quasigeodesic from $a$ to $g_i$ by $\gamma_i$. 
We put 
\begin{eqnarray*}
n_i:=\max\{n ~|~ (g_i,n)\text{ is a vertex of }\gamma_i\}. 
\end{eqnarray*}
For each $0 \le k \le n_i$, we denote the length of the segment of $\gamma_i$ with endpoints $a$ 
and $(g_i,k)$ by $l(k)$. 
Since $\gamma_i$ is a $(\mu_0,C_0)$-quasigeodesic, we have 
\begin{eqnarray*}
\mu_0d_{X(G,{\mathbb H},d_G)}(a,(g_i,k))+C_0 \ge l(k). 
\end{eqnarray*}
Also we have 
\begin{eqnarray*}
l(k) \ge \frac{1}{2}d_{X(G,{\mathbb H},d_G)}(a,g_i). 
\end{eqnarray*}
Hence if $m \le n_i$, then 
\begin{eqnarray*}
\mu_0d_{X(G,{\mathbb H},d_G)}(a,(g_i,m))+C_0 \ge \frac{1}{2}d_{X(G,{\mathbb H},d_G)}(a,g_i). 
\end{eqnarray*}
Also if $m \ge n_i$, then 
$d_{X(G,{\mathbb H},d_G)}(a,(g_i,m)) \ge d_{X(G,{\mathbb H},d_G)}(a,(g_i,n_i))$ 
and hence 
\begin{eqnarray*}
\mu_0d_{X(G,{\mathbb H},d_G)}(a,(g_i,m))+C_0 & \ge & \mu_0d_{X(G,{\mathbb H},d_G)}(a,(g_i,n_i))+C_0 \\
& \ge & \frac{1}{2}d_{X(G,{\mathbb H},d_G)}(a,g_i). 
\end{eqnarray*}
Thus we have 
\begin{eqnarray*}
\mu_0d_{X(G,{\mathbb H},d_G)}(a,v)+C_0 \ge \frac{1}{2}d_{X(G,{\mathbb H},d_G)}(a,g_i). 
\end{eqnarray*}
Since it follows from the choice of $c_2$ that 
\begin{eqnarray*}
d_{X(G,{\mathbb H},d_G)}(e,v) & \ge & d_{X(G,{\mathbb H},d_G)}(e,a)+d_{X(G,{\mathbb H},d_G)}(a,v)-2c_2 \\
& \ge & d_{X(G,{\mathbb H},d_G)}(a,v)-2c_2, 
\end{eqnarray*}
the claim follows. 

We take $N >0$ and suppose that $d_{X(G,{\mathbb H},d_G)}(e,g_i)\ge N$ for $i=1,2$. 
If $d_{X(G,{\mathbb H},d_G)}(e,a) \ge \frac{2N}{3}$, then it follows from the choice of $a$ that 
\begin{eqnarray*}
d_{X(G,{\mathbb H},d_G)}(e,\gamma_0) \ge d_{X(G,{\mathbb H},d_G)}(e,a)\ge \frac{2N}{3}. 
\end{eqnarray*}
If $d_{X(G,{\mathbb H},d_G)}(e,a) < \frac{2N}{3}$, then 
\begin{eqnarray*}
d_{X(G,{\mathbb H},d_G)}(a,g_i) \ge d_{X(G,{\mathbb H},d_G)}(e,g_i)-d_{X(G,{\mathbb H},d_G)}(e,a)>\frac{N}{3}
\end{eqnarray*}
for $i=1,2$. 
Hence the claim above implies that 
\begin{eqnarray*}
d_{X(G,{\mathbb H},d_G)}(e,\gamma_0) > \frac{N}{6\mu_0}-\frac{C_0}{\mu_0}-2c_2. 
\end{eqnarray*}
We note that $\frac{2N}{3} > \frac{N}{6\mu_0}-\frac{C_0}{\mu_0}-2c_2$. 

There exists a constant $c_3\ge 0$ such that 
any $(\mu,C)$-quasigeodesic $\gamma$ from $g_1$ to $g_2$ is contained 
in the $c_3$-neighborhood of $\gamma_0$ because $X(G,{\mathbb H},d_G)$ 
is hyperbolic and $\gamma_0$ is a $(\mu_0,C_0)$-quasigeodesic. 
Hence we have 
\begin{eqnarray*}
d_{X(G,{\mathbb H},d_G)}(e,\gamma)>\frac{N}{6\mu_0}-\frac{C_0}{\mu_0}-2c_2-c_3. 
\end{eqnarray*}
Thus we obtain a desired function $f_1$ which is defined by 
$f_1(N)=\max\left\{\frac{N}{6\mu_0}-\frac{C_0}{\mu_0}-2c_2-c_3, 0\right\}$. 
\end{proof}

\section{Proof of Theorem \ref{geomorder}}\label{go}
We fix some notations in this section. 
Let $G$ be a countable group. 
Let $X$ and $Y$ be compact metrizable spaces endowed with geometrically finite convergence
actions of $G$ such that $\H(X)\to \H(Y)$. 
Let $\mathbb K$ and $\mathbb H$ be sets of representatives of conjugacy classes of 
$\H(X)$ and $\H(Y)$, respectively 
and satisfy that for any $K\in \mathbb K$, there exists $H\in \mathbb H$ such that $K\subset H$
(see \cite[Lemma 3.5]{M-O-Y3}). 
The group $G$ is finitely generated relative to $\mathbb K$ by Theorem \ref{s}.
Let $d_G$ be a left invariant proper metric on $G$. 
By rescaling $d_G$ if necessary, we assume that 
$G$ is generated by $S\cup\bigcup_{K\in {\mathbb K}}K$, 
where $S:=\{g\in G \ |\ 0< d_G(e,g)\le 1\}$, and that 
any $H\in\mathbb H$ is generated by 
$(S\cap H)\cup \bigcup_{K\in {\mathbb K}_H}K$, 
where ${\mathbb K}_H:=\{K\in{\mathbb K}\ |\ K\subset H \}$.
Obviously $G$ is also generated by $S\cup\bigcup_{H\in {\mathbb H}}H$. 
We consider the augmented spaces $X(G,{\mathbb H},d_G)$ and $X(G,{\mathbb K},d_G)$ 
and the relative Cayley graphs 
$\overline{\Gamma}(G,{\mathbb H},S)$ and $\overline{\Gamma}(G,{\mathbb K},S)$. 
We have the following commutative diagram:
\[
\begin{CD}
G @>id_G >> G\\
@Vi_{\mathbb K}VV @VVi_{\mathbb H}V \\
X(G,{\mathbb K},d_G) @> >> X(G,{\mathbb H},d_G)\\
@V\pi_{\mathbb K}VV @VV\pi_{\mathbb H}V \\
\overline{\Gamma}(G,{\mathbb K},S) @> >>\overline{\Gamma}(G,{\mathbb H},S).
\end{CD}
\]
We identify 
$i_{\mathbb H}(G)$, $i_{\mathbb K}(G)$, $\pi_{\mathbb H}\circ i_{\mathbb H}(G)$
and $\pi_{\mathbb K}\circ i_{\mathbb K}(G)$ with $G$ 
and also identify 
$i_{\mathbb H}(g)$, $i_{\mathbb K}(g)$, $\pi_{\mathbb H}\circ i_{\mathbb H}(g)$
and $\pi_{\mathbb K}\circ i_{\mathbb K}(g)$ with $g$ 
for every $g\in G$. 

Since $G$ acts on $X(G,{\mathbb K},d_G)$ and $X(G,{\mathbb H},d_G)$ 
properly isometrically,
the induced metrics 
$i_{\mathbb K}^*d_{X(G,{\mathbb K},d_G)}$ and $i_{\mathbb H}^*d_{X(G,{\mathbb H},d_G)}$
on $G$ are left invariant proper metrics on $G$. 
It is well-known that such metrics are coarsely equivalent. 
In particular we have the following:  
\begin{Lem}\label{a'}
There exists a non-decreasing function 
$f_2:[0,\infty)\to [0,\infty)$ 
satisfying $f_2(t)\to \infty$ as $t\to \infty$ such that for any $g_1,g_2\in G$, 
\[
f_2(d_{X(G,{\mathbb K},d_G)}(g_1,g_2))\le 
d_{X(G,{\mathbb H},d_G)}(g_1,g_2).
\]
\end{Lem}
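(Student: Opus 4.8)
The plan is to exhibit $f_2$ explicitly as a modulus extracted from the fact, recorded just before the statement, that the two pulled-back metrics on $G$ are left invariant and proper. Write $d_\mathbb{K}:=i_\mathbb{K}^*d_{X(G,\mathbb{K},d_G)}$ and $d_\mathbb{H}:=i_\mathbb{H}^*d_{X(G,\mathbb{H},d_G)}$ for these two metrics on $G$. Because $G$ acts isometrically on each augmented space, both are left invariant, so $d_\mathbb{K}(g_1,g_2)=d_\mathbb{K}(e,g_1^{-1}g_2)$ and $d_\mathbb{H}(g_1,g_2)=d_\mathbb{H}(e,g_1^{-1}g_2)$. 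Hence it suffices to find a non-decreasing $f_2$ with $f_2(t)\to\infty$ such that $f_2(d_\mathbb{K}(e,g))\le d_\mathbb{H}(e,g)$ for every $g\in G$; the general inequality then follows by applying this to $g_1^{-1}g_2$.

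First I would define
\[
f_2(R):=\inf\{\, d_\mathbb{H}(e,g) : g\in G,\ d_\mathbb{K}(e,g)\ge R\,\}, \qquad R\ge 0.
\]
Since $G$ is infinite (it carries a non-elementary convergence action) and $d_\mathbb{K}$ is proper, the competing set $\{g\in G : d_\mathbb{K}(e,g)\ge R\}$ is non-empty for every $R$, so $f_2(R)$ is a well-defined element of $[0,\infty)$. The function $f_2$ is non-decreasing because enlarging $R$ shrinks the set over which the infimum is taken. The required inequality is then immediate from the definition: fixing $g$ and taking $R=d_\mathbb{K}(e,g)$ places $g$ itself into the competing set, whence $f_2(d_\mathbb{K}(e,g))\le d_\mathbb{H}(e,g)$.

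The only substantive point, and the step I would treat as the crux, is that $f_2(R)\to\infty$ as $R\to\infty$; here properness of $d_\mathbb{H}$ is essential. Suppose not; then there are $M\ge 0$ and $R_n\to\infty$ with $f_2(R_n)\le M$, so for each $n$ one may choose $g_n\in G$ with $d_\mathbb{K}(e,g_n)\ge R_n$ and $d_\mathbb{H}(e,g_n)\le M+1$. By properness of $d_\mathbb{H}$ the ball $\{g\in G : d_\mathbb{H}(e,g)\le M+1\}$ is finite, so the $g_n$ range over a finite set and $d_\mathbb{K}(e,g_n)$ stays bounded, contradicting $d_\mathbb{K}(e,g_n)\ge R_n\to\infty$. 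This yields $f_2(R)\to\infty$ and finishes the proof. Conceptually the argument is just the standard fact that any two left invariant proper metrics on a countable group are coarsely equivalent, made quantitative in the single direction we need.
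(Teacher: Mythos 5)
Your proof is correct and is essentially the argument the paper has in mind: the paper gives no proof of Lemma \ref{a'}, simply invoking the well-known fact that two left invariant proper metrics on a countable group are coarsely equivalent, and your explicit construction of $f_2$ as an infimum (reducing to basepoint $e$ by left invariance and using finiteness of $d_{\mathbb H}$-balls for the divergence of $f_2$) is the standard quantitative proof of exactly that fact. Nothing is missing.
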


The following is \cite[Proposition 3.9]{Yan11} (see also \cite[Lemma 2.8]{M-O-Y2}): 
\begin{Lem}\label{minilift}
There exist constants $\mu\ge 1$ and $C\ge 0$ 
such that for any geodesic $\overline{\gamma}_\mathbb H$ in $\overline{\Gamma}(G,{\mathbb H},S)$, 
its minimal lift $\overline{\gamma}_\mathbb K$ is 
a locally minimal $(\mu,C)$-quasigeodesic without backtracking 
in $\overline{\Gamma}(G,{\mathbb K},S)$. 
Here a {\it minimal lift} $\overline{\gamma}_\mathbb K$ of $\overline{\gamma}_\mathbb H$ 
is a path given by replacing every $H$-component of $\overline{\gamma}_\mathbb H$ with 
a geodesic in $\overline{\Gamma}(H,{\mathbb K}_H,H\cap S)$ for every $H\in \mathbb H$. 
\end{Lem}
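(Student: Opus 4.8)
The plan is to verify the three asserted properties --- no backtracking, local minimality, and the uniform quasigeodesic estimate --- by working combinatorially inside the two relative Cayley graphs and invoking the relative hyperbolicity of $(G,\mathbb{K})$. First I would record two preliminary observations. Since each $K\in\mathbb{K}$ is contained in a unique $H\in\mathbb{H}$ (uniqueness follows from almost malnormality of $\mathbb{H}$, as $K$ is infinite while distinct members of $\mathbb{H}$ meet in a finite set), every coset $gK$ lies in a unique coset $gH$, and the identity on $G$ extends to a natural $1$-Lipschitz ``forgetful'' map $\overline{\Gamma}(G,\mathbb{K},S)\to\overline{\Gamma}(G,\mathbb{H},S)$ collapsing $K$-cosets into $H$-cosets. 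Second, because in $\overline{\Gamma}(G,\mathbb{H},S)$ any two distinct points of a common $H$-coset are joined by a single edge, each $H$-component of the geodesic $\overline{\gamma}_{\mathbb{H}}$ is a single $H$-edge; forming the minimal lift thus means keeping the $S$-edges and replacing each such $H$-edge $\{gh_1,gh_2\}$ by a geodesic of $g\overline{\Gamma}(H,\mathbb{K}_H,H\cap S)$. Throughout, the main external input will be the Bounded Coset Penetration property for the relatively hyperbolic structure $(G,\mathbb{K})$, which is available because $X(G,\mathbb{K},d_G)$ is hyperbolic by Theorem \ref{s} (see \cite{Osi06a}).

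Next I would dispatch the two soft properties. For \emph{local minimality}, each replacement segment is by construction a geodesic of the $H$-relative Cayley graph, so each of its $K$-components is a geodesic in its coset, which is exactly the required condition. For \emph{no backtracking}, suppose two distinct $K$-components of the lift lay in the same coset $gK$. Applying the forgetful map and using that $gK\subset gH$, their images would lie in the single $H$-coset $gH$; if they came from different $H$-edges of $\overline{\gamma}_{\mathbb{H}}$ this would produce two components of the geodesic $\overline{\gamma}_{\mathbb{H}}$ in the same coset $gH$, contradicting that a geodesic has no backtracking, while if they came from a single replacement segment it would contradict that the chosen $H$-geodesic has no backtracking. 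The junctions created by the intervening $S$-edges are handled by the same coset-tracking argument.

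The core of the argument, and the step I expect to be the main obstacle, is the uniform quasigeodesic estimate: I must bound the length in $\overline{\Gamma}(G,\mathbb{K},S)$ of any subpath of the lift by $\mu$ times the $\mathbb{K}$-distance between its endpoints plus $C$. The difficulty is that a single $H$-edge of $\mathbb{H}$-length one is replaced by an arbitrarily long $H$-relative geodesic, so no edge-by-edge comparison with the $\mathbb{H}$-metric can work. Instead the plan is twofold. First, I would establish that each copy $g\overline{\Gamma}(H,\mathbb{K}_H,H\cap S)$ is quasi-isometrically embedded in $\overline{\Gamma}(G,\mathbb{K},S)$ with constants independent of $g$ and $H$; this undistortion of the peripheral pieces inside the finer relative Cayley graph is the genuinely hard point, and it is exactly where the relative hyperbolicity of $(G,\mathbb{K})$ (equivalently the hyperbolicity of $X(G,\mathbb{K},d_G)$ from Theorem \ref{s}, together with the coarse comparison of the two induced metrics in Lemma \ref{a'}) must be used. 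Granting this, the $\mathbb{K}$-length of each replacement segment is comparable to the $\mathbb{K}$-distance between its endpoints. Second, I would use the Bounded Coset Penetration property to rule out shortcuts between the successive segments and $S$-edges: any geodesic in $\overline{\Gamma}(G,\mathbb{K},S)$ joining two points of the lift must penetrate essentially the same sequence of $K$-cosets, so the lengths of the pieces add up coarsely additively. Combining the per-segment comparison with the no-shortcut control yields uniform constants $\mu\ge 1$ and $C\ge 0$, completing the proof.
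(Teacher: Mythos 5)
The paper does not prove this lemma at all: it is imported verbatim, with the sentence preceding the statement citing \cite[Proposition 3.9]{Yan11} and \cite[Lemma 2.8]{M-O-Y2}. So there is no internal argument to compare yours against, and your proposal has to stand or fall as a proof in its own right. The soft parts of your write-up are essentially fine: each $H$-component of a geodesic in $\overline{\Gamma}(G,\mathbb H,S)$ is indeed a single edge, the replacement segments are geodesics of the subgraphs $g\overline{\Gamma}(H,\mathbb K_H,H\cap S)$, and two distinct $K$-components of the lift lying in one coset $gK$ would force two peripheral $\mathbb H$-cosets to contain the \emph{infinite} set $gK$, hence to coincide by almost malnormality, contradicting the absence of backtracking of the geodesic $\overline{\gamma}_{\mathbb H}$. (Do make that infinitude step explicit: finiteness of pairwise intersections of distinct peripheral cosets does not by itself reduce the intersection to a point.)

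The core estimate, however, is not established, for two reasons. First, the uniform quasi-isometric embedding of $g\overline{\Gamma}(H,\mathbb K_H,H\cap S)$ into $\overline{\Gamma}(G,\mathbb K,S)$ is precisely the assertion that each $H\in\mathbb H$ is relatively quasiconvex with respect to $\mathbb K$ with induced peripheral structure $\mathbb K_H$; that is a theorem in its own right (essentially \cite[Theorem 1.1]{Yan11}, cf.\ \cite{Hru10}), and it does not follow from Lemma \ref{a'}, which compares the two augmented metrics on $G$ and says nothing about the intrinsic relative metric of $H$. Since the lemma being proved already implies this undistortion (apply it to a single $H$-edge), flagging it as ``the hard point'' without a derivation leaves the proof incomplete. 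Second, and more decisively, the assembly step is circular: the Bounded Coset Penetration property compares two quasigeodesics without backtracking sharing endpoints, so to play a $\mathbb K$-geodesic from $g_1$ to $g_2$ against the lift you must already know the lift is a quasigeodesic --- the very conclusion sought. (Projecting the $\mathbb K$-geodesic into $\overline{\Gamma}(G,\mathbb H,S)$ and applying BCP for $(G,\mathbb H)$ fails too, since that projection need not be a quasigeodesic in the coarser graph.) What is actually required is the coarse additivity
$d_{\mathbb K}(g_1,g_2)\gtrsim \sum_i d_{\mathbb K}(a_i,b_i)+\#\{S\text{-edges}\}$,
where $a_i,b_i$ are the endpoints of the $H$-edges of $\overline{\gamma}_{\mathbb H}$. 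This needs an alignment argument --- the peripheral cosets met by the geodesic $\overline{\gamma}_{\mathbb H}$ are uniformly quasiconvex and pairwise coarsely disjoint in $X(G,\mathbb K,d_G)$, so any path from $g_1$ to $g_2$ must pass near the transition points in order, forcing the contributions to add up --- and no such argument appears in your sketch; invoking BCP does not supply it.
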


The following is based on the proof of \cite[Lemma 2.1]{Mit98}: 
\begin{Lem}\label{c}
Suppose that for any $N>0$ there exists $M>0$ 
satisfying the following: 
if a geodesic $\gamma_{\mathbb K}$ in $X(G,{\mathbb K},d_G)$ from $g_1\in G$ to $g_2\in G$
satisfies $d_{X(G,{\mathbb K},d_G)}(e, \gamma_{\mathbb K})>M$, 
then every geodesic $\gamma_{\mathbb H}$ in $X(G,{\mathbb H},d_G)$ 
from $g_1$ to $g_2$ satisfies $d_{X(G,{\mathbb H},d_G)}(e,\gamma_{\mathbb H})> N$. 
Then the identity map $id_G:G\to G$ extends to a $G$-equivariant continuous map from 
$G\cup \partial X(G,{\mathbb K},d_G)$ to $G\cup \partial X(G,{\mathbb H},d_G)$. 
\end{Lem}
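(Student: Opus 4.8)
The plan is to follow the standard construction of Cannon--Thurston type maps, rephrasing the hypothesis in terms of Gromov products. Recall that $X(G,\mathbb{K},d_G)$ and $X(G,\mathbb{H},d_G)$ are geodesic and hyperbolic, so in each of them the Gromov product $(g_1\mid g_2)_e$ based at $e$ agrees, up to an additive error controlled by the hyperbolicity constant, with the distance from $e$ to a geodesic joining $g_1$ and $g_2$; the ambiguity between ``a geodesic'' and ``every geodesic'' is absorbed since any two geodesics with common endpoints are uniformly close. Hence the hypothesis can be restated as: for every $N>0$ there is $M>0$ such that, for all $g_1,g_2\in G$, if $(g_1\mid g_2)_e$ computed in $X(G,\mathbb{K},d_G)$ exceeds $M$, then $(g_1\mid g_2)_e$ computed in $X(G,\mathbb{H},d_G)$ exceeds $N$. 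In words, a large Gromov product in the $\mathbb{K}$-augmented space forces a large Gromov product in the $\mathbb{H}$-augmented space.

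First I would use the sequential description of the compactifications: a sequence $\{g_n\}$ in $G$ converges to a boundary point of $X(G,\mathbb{K},d_G)$ exactly when $(g_n\mid g_m)_e\to\infty$ as $n,m\to\infty$, and two such sequences $\{g_n\}$, $\{g'_n\}$ converge to the same boundary point exactly when $(g_n\mid g'_m)_e\to\infty$ (and likewise for $\mathbb{H}$). Since the induced metric on $G$ is proper, each point of $G$ is isolated in its compactification, so any extension of $id_G$ is automatically continuous on $G$; it remains only to define the extension on, and verify continuity at, the boundary $\partial X(G,\mathbb{K},d_G)$.

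Next I would define the boundary map. Given $\xi\in\partial X(G,\mathbb{K},d_G)$, choose $g_n\to\xi$, so $(g_n\mid g_m)_e\to\infty$ in $X(G,\mathbb{K},d_G)$; by the restated hypothesis the corresponding $\mathbb{H}$-products also tend to infinity, while $d_{X(G,\mathbb{H},d_G)}(e,g_n)\to\infty$ follows from Lemma \ref{a'} applied to $d_{X(G,\mathbb{K},d_G)}(e,g_n)\to\infty$. Thus $\{g_n\}$ converges to some $\eta\in\partial X(G,\mathbb{H},d_G)$, and I set $\overline{id_G}(\xi):=\eta$. Well-definedness is the same argument: if $g'_n\to\xi$ as well, then $(g_n\mid g'_m)_e\to\infty$ in $X(G,\mathbb{K},d_G)$, hence in $X(G,\mathbb{H},d_G)$, so $\{g_n\}$ and $\{g'_n\}$ have the same $\mathbb{H}$-limit. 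By construction $\overline{id_G}$ restricts to the identity on $G$; since $id_G$ is $G$-equivariant and the $G$-actions on both compactifications are by homeomorphisms, $G$-equivariance of $\overline{id_G}$ propagates from the dense subset $G$ to the boundary.

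Finally I would check continuity at a boundary point $\xi$. Fixing metrics on both compactifications and taking $z_k\to\xi$ in $G\cup\partial X(G,\mathbb{K},d_G)$, I would choose for each $k$ an element $g_k\in G$ simultaneously within $1/k$ of $z_k$ in the $\mathbb{K}$-compactification and within $1/k$ of $\overline{id_G}(z_k)$ in the $\mathbb{H}$-compactification; this is possible by approximating each $z_k$ with a tail of the sequence defining $\overline{id_G}(z_k)$ (and taking $g_k=z_k$ when $z_k\in G$). Then $g_k\to\xi$, so $g_k\to\overline{id_G}(\xi)$ in the $\mathbb{H}$-compactification by well-definedness, and since $\overline{id_G}(z_k)$ stays within $1/k$ of $g_k$ we conclude $\overline{id_G}(z_k)\to\overline{id_G}(\xi)$. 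The step I expect to be the main obstacle is the translation in the first paragraph: one must make the passage between the distance-to-geodesic hypothesis and the Gromov-product formulation genuinely uniform, correctly absorbing the hyperbolicity errors of \emph{both} augmented spaces into a single function $N\mapsto M$, after which all remaining steps are routine limiting arguments.
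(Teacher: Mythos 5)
Your proof is correct and follows essentially the same route as the paper's: both rest on translating the hypothesis about distances from $e$ to connecting geodesics into the statement that sequences identified in $\partial X(G,{\mathbb K},d_G)$ are identified in $\partial X(G,{\mathbb H},d_G)$ (the paper argues by contradiction, extracting two sequences with a common limit in the ${\mathbb K}$-boundary but distinct limits in the ${\mathbb H}$-boundary and comparing the connecting geodesics, while you encode the same dictionary via Gromov products and build the extension directly). The only step needing care is the one you already flag, namely making the passage between the distance-to-geodesic and Gromov-product formulations uniform in the hyperbolicity constants of both augmented spaces, which is routine.
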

\begin{proof}
Suppose that the identity map $id_G:G\to G$ cannot extend to 
a $G$-equivariant continuous map from 
$G\cup \partial X(G,{\mathbb K},d_G)$ to $G\cup \partial X(G,{\mathbb H},d_G)$.
Then it follows from Lemma \ref{b} and Remark \ref{b''} that 
we have two sequences $\{g_{n}\}$, $\{g'_{n}\}$, 
a point $q\in \partial X(G,{\mathbb K},d_G)$ and two different points 
$p,p'\in \partial X(G,{\mathbb H},d_G)$
such that $g_{n}\to q$, $g'_{n}\to q$ 
in $X(G,{\mathbb K},d_G)\cup \partial X(G,{\mathbb K},d_G)$, 
and $g_{n}\to p$, $g'_{n}\to p'$ 
in $X(G,{\mathbb H},d_G)\cup \partial X(G,{\mathbb H},d_G)$.
We take a geodesic $\gamma_{{\mathbb K},n}$ in $X(G,{\mathbb K},d_G)$
from $g_{n}$ to $g'_{n}$
and a geodesic $\gamma_{{\mathbb H},n}$ in $X(G,{\mathbb H},d_G)$
from $g_{n}$ to $g'_{n}$
for every $n$. 
Since $g_{n}\to p$, $g'_{n}\to p'$ in $X(G,{\mathbb H},d_G)\cup \partial X(G,{\mathbb H},d_G)$ 
and $p\neq p'$, there exists $N_1>0$ and $N_2>0$ such that for any $n>N_1$, 
$\gamma_{{\mathbb H},n}\cap
\{x\in X(G,{\mathbb H},d_G) \ |\ d_{X(G,{\mathbb H},d_G)}(e,x)\le N_2\}\neq \emptyset$. 
On the other hand, 
since $g_{n}\to q$, $g'_{n}\to q$ in $X(G,{\mathbb K},d_G)\cup \partial X(G,{\mathbb K},d_G)$, 
for any $M>0$, 
there exists $N_3>0$ such that for any $n>N_3$, 
$d_{X(G,{\mathbb K},d_G)}(e,\gamma_{{\mathbb K},n})> M$. 
\end{proof}

\begin{proof}[Proof of Theorem \ref{geomorder}]
For every $(g_1,g_2)\in G\times G$, we fix four paths from $g_1$ to $g_2$: 
the first path which is denoted by $\overline{\gamma_{\mathbb H}}(g_1,g_2)$
is a geodesic in $\overline{\Gamma}(G,{\mathbb H},S)$; 
the second path which is denoted by $\overline{\gamma_{\mathbb K}}(g_1,g_2)$
is a minimal lift of $\overline{\gamma_{\mathbb H}}(g_1,g_2)$ 
in $\overline{\Gamma}(G,{\mathbb K},S)$;
the third path which is denoted by $\gamma_{\mathbb H}(g_1,g_2)$ 
is the typical lift of $\overline{\gamma_{\mathbb H}}(g_1,g_2)$ 
in $X(G,{\mathbb H},d_G)$;
the fourth path which is denoted by $\gamma_{\mathbb K}(g_1,g_2)$ 
is the typical lift of $\overline{\gamma_{\mathbb K}}(g_1,g_2)$ 
in $X(G,{\mathbb K},d_G)$.
We note that 
$G\cap \gamma_{\mathbb H}(g_1,g_2)=G\cap \overline{\gamma_{\mathbb H}}(g_1,g_2)\subset 
G\cap \gamma_{\mathbb K}(g_1,g_2)=G\cap \overline{\gamma_{\mathbb K}}(g_1,g_2)$.
It follows from Lemma \ref{minilift} and Lemma \ref{d}
that we have constants $\mu \ge 1$ and $C\ge 0$ such that 
for any $(g_1, g_2)\in G\times G$, 
$\overline{\gamma_{\mathbb K}}(g_1,g_2)$, $\gamma_{\mathbb H}(g_1,g_2)$ and $\gamma_{\mathbb K}(g_1,g_2)$
are $(\mu,C)$-quasigeodesics in 
$\overline{\Gamma}(G,{\mathbb K},S)$, $X(G,{\mathbb H},d_G)$ and $X(G,{\mathbb K},d_G)$, respectively. 
Since $X(G,{\mathbb H},d_G)$ and $X(G,{\mathbb K},d_G)$ are hyperbolic,  
we have a constant $c\ge 0$ such that 
for any $(g_1,g_2)\in G\times G$, 
$\gamma_{\mathbb H}(g_1,g_2)$ is contained in a $c$-neighborhood of any geodesic from $g_1$ to $g_2$
in $X(G,{\mathbb H},d_G)$
and also $\gamma_{\mathbb K}(g_1,g_2)$ is contained in a $c$-neighborhood of any geodesic from $g_1$ to $g_2$
in $X(G,{\mathbb K},d_G)$. 

We take a function $f_1$ in Lemma \ref{e} and a function $f_2$ in Lemma \ref{a'}.
We consider a function $F:[0,\infty)\ni t\mapsto f_1(f_2(\max\{t-c, 0\}))-c\in \mathbb R$.
Then $F$ is non-decreasing and satisfies $F(t)\to \infty$ as $t\to \infty$. 

We take $t\ge 0$, $(g_1,g_2)\in G\times G$, 
a geodesic $\gamma'_\mathbb K$ from $g_1$ to $g_2$ in $X(G,{\mathbb K},d_G)$
and a geodesic $\gamma'_\mathbb H$ from $g_1$ to $g_2$ in $X(G,{\mathbb H},d_G)$. 
We suppose that $d_{X(G,{\mathbb K},d_G)}(e,\gamma'_\mathbb K)>t$.
Then we have $d_{X(G,{\mathbb K},d_G)}(e,\gamma_\mathbb K(g_1,g_2))>t-c$
and thus $d_{X(G,{\mathbb K},d_G)}(e,G\cap \gamma_\mathbb K(g_1,g_2))>t-c$. 
We have $d_{X(G,{\mathbb H},d_G)}(e,G\cap \gamma_\mathbb H(g_1,g_2))
\ge f_2(d_{X(G,{\mathbb K},d_G)}(e,G\cap \gamma_\mathbb K(g_1,g_2)))\ge f_2(\max\{t-c, 0\})$ 
by Lemma \ref{a'}.
It follows from Lemma \ref{e} that we have 
$d_{X(G,{\mathbb H},d_G)}(e,\gamma_\mathbb H(g_1,g_2))\ge f_1(f_2(\max\{t-c, 0\}))$. 
We have $d_{X(G,{\mathbb H},d_G)}(e,\gamma'_\mathbb H)\ge f_1(f_2(\max\{t-c, 0\}))-c=F(t)$.

For any $N>0$, we can take $M>0$ such that $F(M)> N$ 
because $F(t)\to \infty$ as $t\to \infty$. 
Then $M$ satisfies the condition in Lemma \ref{c}. 
Hence we have a $G$-equivariant continuous map 
from $\partial X(G,{\mathbb K},d_G)$ to $\partial X(G,{\mathbb H},d_G)$. 
Since $\partial X(G,{\mathbb K},d_G)$ is $G$-equivariant homeomorphic to $X$ 
and $\partial X(G,{\mathbb H},d_G)$ is $G$-equivariant homeomorphic to $Y$, 
we have a $G$-equivariant continuous map from $X$ to $Y$. 
\end{proof}

\begin{Cor}\label{pphi}
Let $G$ be a countable group
and $X$ and $Y$ be compact metrizable spaces endowed 
with geometrically finite convergence actions of $G$.
Let $\phi:G\to G$ be an automorphism of $G$.
Suppose that $\phi(\H(X))\to \H(Y)$. 
Then we have a continuous surjective map $\pi:X\to Y$ 
such that $\pi(gx)=\phi(g)\pi(x)$ for any $g\in G$ and any $x\in X$.
Moreover $\phi\cup \pi:G\cup X\to G\cup Y$ is continuous.
\end{Cor}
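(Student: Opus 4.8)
The plan is to deduce this from Theorem~\ref{geomorder} by twisting the action on $X$ through $\phi$, exactly as in the proof of Lemma~\ref{phi}, but replacing the rigidity input used there by Theorem~\ref{geomorder}. Denote the given action on $X$ by $\rho:G\to\Homeo(X)$ and form the twisted action $\rho\circ\phi^{-1}:G\to\Homeo(X)$; write $X_\phi$ for the same compact space $X$ equipped with this new action. As recorded in the proof of Lemma~\ref{phi}, $\rho\circ\phi^{-1}$ is again a geometrically finite convergence action of $G$ whose set of maximal parabolic subgroups is $\phi(\H(X))$, so that $\H(X_\phi)=\phi(\H(X))$. The hypothesis $\phi(\H(X))\to\H(Y)$ therefore reads $\H(X_\phi)\to\H(Y)$.

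Next I would apply Theorem~\ref{geomorder} to the two geometrically finite convergence actions $X_\phi$ and $Y$: since $\H(X_\phi)\to\H(Y)$, there is a $G$-equivariant continuous map $\pi':X_\phi\to Y$, which is moreover surjective by Lemma~\ref{equivcont}(1). Set $\pi:=\pi'\circ id_X:X\to Y$, where $id_X:X\to X_\phi$ is the identity on underlying sets; then $\pi$ is surjective because $\pi'$ is and $id_X$ is a bijection. For the semiconjugacy relation I would use that $id_X$ intertwines the two actions in the sense $id_X(\rho(g)x)=\rho\circ\phi^{-1}(\phi(g))\,id_X(x)$ for all $g\in G$ and $x\in X$ (the identity already noted in the proof of Lemma~\ref{phi}), together with the $G$-equivariance of $\pi'$ as a map $X_\phi\to Y$. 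This gives, for $g\in G$ and $x\in X$,
\[
\pi(gx)=\pi'\bigl(id_X(\rho(g)x)\bigr)=\pi'\bigl(\rho\circ\phi^{-1}(\phi(g))\,id_X(x)\bigr)=\phi(g)\,\pi'\bigl(id_X(x)\bigr)=\phi(g)\,\pi(x),
\]
which is the asserted relation $\pi(gx)=\phi(g)\pi(x)$.

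Finally, for the compactifications I would factor $\phi\cup\pi$ through $G\cup X_\phi$. By the proof of Lemma~\ref{phi} the map $\phi\cup id_X:G\cup X\to G\cup X_\phi$ is a homeomorphism, and by Lemma~\ref{equivcont}(6) the map $id_G\cup\pi':G\cup X_\phi\to G\cup Y$ is continuous; their composite is precisely $\phi\cup\pi:G\cup X\to G\cup Y$, which is therefore continuous. I expect no serious obstacle here beyond Theorem~\ref{geomorder} itself, since the corollary is a formal twisting of it; the only point demanding genuine care is the bookkeeping of the twisted action, namely checking that $\H(X_\phi)=\phi(\H(X))$ and that the identity map carries the $\rho$-action to the $\phi$-twisted action correctly, so that the $G$-equivariance of $\pi'$ converts into the $\phi$-semiconjugacy of $\pi$.
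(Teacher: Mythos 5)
Your proposal is correct and follows essentially the same route as the paper, whose proof is a one-line citation of Theorem~\ref{geomorder}, Remark~\ref{geomorder'} and Lemma~\ref{phi}: you have simply unpacked that citation by performing the twist to $X_\phi$ from the proof of Lemma~\ref{phi} and substituting Theorem~\ref{geomorder} (plus Lemma~\ref{equivcont}(1),(6)) for the rigidity input. The bookkeeping of $\H(X_\phi)=\phi(\H(X))$ and of the intertwining identity is exactly right.
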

\begin{proof}
This follows from Theorem \ref{geomorder}, Remark \ref{geomorder'} and Lemma \ref{phi}.
\end{proof}

\section{Applications of Theorem \ref{geomorder}}\label{ct}
In this section we deal with applications of Theorem \ref{geomorder}.

\subsection{Proof of Corollaries \ref{CT} and \ref{CT'''}}
First we prove Corollary \ref{CT}. We need the following: 
\begin{Lem}\label{F_2}
Let $L(0)$ be a finitely generated and virtually non-abelian free group. 
Then there exists a sequence of subgroups $\{L(k)\}_{k\in \N}$
satisfying the following:
\begin{enumerate}
\item[(1)] $L(k)$ is isomorphic to a 
finitely generated and virtually non-abelian free group for any $k\in \N$; 
\item[(2)] $L(k)$ is an almost malnormal and quasiconvex subgroup of $L(k-1)$ for every $k\in \N$;
\item[(3)] $\bigcap_{k\in \N}L(k)$ is a finite subgroup of $L(0)$.
\end{enumerate}
\end{Lem}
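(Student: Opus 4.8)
The plan is to construct the sequence $\{L(k)\}$ recursively, at each stage producing an almost malnormal quasiconvex free subgroup of the previous group whose intersection with all earlier groups shrinks. The key observation is that a finitely generated virtually non-abelian free group $L$ is non-elementary word-hyperbolic, so it contains elements of infinite order with the usual north-south dynamics on $\partial L$, and moreover it contains \emph{quasiconvex malnormal} subgroups that are again virtually non-abelian free. The standard source of such subgroups is the following: if $a,b\in L$ generate a non-abelian free group and the cyclic subgroups $\langle a\rangle,\langle b\rangle$ are suitably ``independent,'' then for large enough exponents $m$ the subgroup $\langle a^m,b^m\rangle$ is free of rank $2$, quasiconvex, and almost malnormal in $L$ (a ping-pong / local-to-global quasiconvexity argument). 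This gives the inductive step producing $L(k)$ from $L(k-1)$ satisfying (1) and (2).

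To arrange condition (3), the idea is to choose the generators of $L(k)$ to lie deeper and deeper inside $L(0)$ with respect to a fixed word metric, so that the intersection $\bigcap_k L(k)$ cannot contain any infinite-order element. Concretely, first I would pass to a finite-index non-abelian free subgroup $F\le L(0)$ (possible by the virtual freeness hypothesis) and build the tower inside $F$; since $\bigcap_k L(k)\subset F$ and a finite-index overgroup only changes things by a finite kernel, it suffices to make $\bigcap_k L(k)$ trivial in $F$. I would fix a free basis and, at stage $k$, select the generating elements $a_k,b_k$ of $L(k)$ to be words whose minimal length in $L(k-1)$ (equivalently, their translation length on the Bass--Serre tree or Cayley graph) exceeds some bound growing with $k$. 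Because each $L(k)$ is quasiconvex in $L(k-1)$, hence quasi-isometrically embedded, an element lying in every $L(k)$ would have to have arbitrarily large stable length in $L(0)$ unless it is torsion; forcing the generators progressively deeper guarantees no nontrivial infinite-order element survives the intersection, leaving only a finite (indeed trivial, inside $F$) subgroup.

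The main obstacle I anticipate is verifying almost malnormality and quasiconvexity \emph{simultaneously} with the depth condition, since these pull in opposite directions: malnormality wants the generators to be ``generic'' and independent, while the depth condition forces a specific growing choice. The clean way around this is to invoke a quantitative ping-pong lemma: given any finite generating set of $L(k-1)$ and any target depth, one can still find high powers $a^{m},b^{m}$ (for $m$ large) of fixed independent hyperbolic elements whose translation lengths exceed the required bound, and for which $\langle a^m,b^m\rangle$ remains free, quasiconvex, and almost malnormal, with malnormality of high-power subgroups being a standard consequence of the contracting behaviour of long geodesic words. Thus the depth requirement is met automatically by taking $m$ large, and no genuine tension arises once the statements are made quantitative.

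I would therefore organize the proof as: (i) reduce to a non-abelian free group $F$ of finite index in $L(0)$; (ii) state and apply the quantitative malnormal-quasiconvex-free subgroup lemma to produce $L(k)\le L(k-1)$ satisfying (1) and (2), choosing powers large enough that the generators have translation length $\ge k$ in $L(0)$; (iii) deduce (3) from quasi-isometric embedding together with the growing translation lengths, concluding that $\bigcap_k L(k)$ contains no infinite-order element and hence is finite. The bulk of the writing is the verification in step (ii), which I expect to cite or adapt from standard results on quasiconvex malnormal subgroups of free groups (e.g.\ via the work on Stallings foldings or Gitik--Rips type separability), rather than proving ping-pong from scratch.
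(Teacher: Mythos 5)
There are two genuine gaps, both in the step that is supposed to produce the almost malnormal quasiconvex subgroups. First, your ``standard source'' of such subgroups is not correct: for a free basis (or any pair of independent elements) $a,b$, the subgroup $\langle a^m,b^m\rangle$ is indeed free of rank $2$ and quasiconvex for large $m$, but it is \emph{never} almost malnormal in $\langle a,b\rangle$, because $a\langle a^m,b^m\rangle a^{-1}\cap\langle a^m,b^m\rangle$ contains the infinite subgroup $\langle a^m\rangle$ while $a\notin\langle a^m,b^m\rangle$ for $m\ge 2$. No amount of ``independence'' of $a$ and $b$ or contraction of long geodesics repairs this; malnormality requires a genuinely different choice of generators (e.g.\ small-cancellation words such as $a^mba^{m+1}b\cdots$, or the relative-hyperbolicity construction), so the quantitative lemma on which your induction rests is false as stated. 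Second, your reduction in step (i) to a finite-index non-abelian free subgroup $F\le L(0)$ is harmless for condition (3) but breaks condition (2): if $L(k-1)$ has a nontrivial finite normal subgroup $K$ (which happens for the groups $L(0)=F_m\times K(G)$ actually fed into this lemma in the proof of Corollary \ref{CT}), then any infinite subgroup $L(k)$ with $L(k)\cap K=\{1\}$ fails to be almost malnormal, since for $g\in K\setminus\{1\}$ one computes $gL(k)g^{-1}\cap L(k)=C_{L(k)}(g)$, which has finite index in $L(k)$ and is therefore infinite. A torsion-free $L(k)\le F$ can thus never be almost malnormal in a virtually free $L(k-1)$ with nontrivial maximal finite normal subgroup; the subgroups must be built to contain that finite normal subgroup.

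For comparison, the paper sidesteps both problems by working with relatively hyperbolic structures: it enumerates $M(0)\setminus\{e\}=\{m_k\}$ for a finite-index free subgroup $M(0)$, places $m_k$ in a virtually cyclic subgroup $E(k)$ with $L(k-1)$ hyperbolic relative to $\{E(k)\}$, and then invokes Theorem \ref{hypemb-DGO} to produce $L(k)$ (of the form free times the maximal finite normal subgroup) with $L(k-1)$ hyperbolic relative to $\{E(k),L(k)\}$; almost malnormality and quasiconvexity follow from \cite[Theorem 1.5]{Osi06b}, and $m_k\notin L(k)$ follows from finiteness of $E(k)\cap L(k)$ (\cite[Proposition 2.36]{Osi06a}), which gives condition (3) directly without any translation-length bookkeeping. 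Your step (iii) is a reasonable alternative route to (3) \emph{if} one has a correct supply of deep almost malnormal quasiconvex subgroups (note also that long generators alone do not force all nontrivial elements of the subgroup to be long---you need a normal form or small-cancellation control), but as written the construction feeding it does not exist.
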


\begin{proof}
We fix a finite index free subgroup $M(0)$ of $L(0)$ and put 
$M(0)\setminus \{e\}=\{m_k ~|~ k \in \N\}$. 
We construct $L(k)$ inductively. 
Suppose that $L(k-1)$ is constructed for a positive integer $k \in \N$. 
If $m_{k}$ does not belong to $L(k-1)$, we put $L(k)=L(k-1)$. 
If $m_{k}$ belongs to $L(k-1)$, 
then we have an infinite virtually cyclic subgroup $E(k)$ of $L(k-1)$ which 
contains $m_k$ and satisfies that $L(k-1)$ is hyperbolic relative to $\{E(k)\}$
by \cite[Corollary 1.7]{Osi06b}. 
It follows from Theorem \ref{hypemb-DGO} (or \cite[Theorem 1.2]{M-O-Y4}) 
that there exists a subgroup $L(k)$ of $L(k-1)$ satisfying the following: 
\begin{enumerate}
\setlength{\itemsep}{0mm}
\item[(i)] $L(k)$ is finitely generated, virtually non-abelian free; 
\item[(ii)] $L(k-1)$ is hyperbolic relative to $\{E(k), L(k)\}$. 
\end{enumerate}
By \cite[Theorem 1.5]{Osi06b}, 
$L(k)$ is an almost malnormal and quasiconvex subgroup of $L(k-1)$ for every $k\in \N$. 
By \cite[Proposition 2.36]{Osi06a}, $E(k)\cap L(k)$ is finite and thus 
$m_k$ does not belong to $L(k)$ for every $k \in \N$. 
Hence we have $\left(\bigcap_{k\in \N}L(k)\right) \cap M(0)=\{e\}$. 
Since $M(0)$ is a finite index subgroup of $L(0)$, $\bigcap_{k\in \N}L(k)$ is finite. 
\end{proof}

\begin{proof}[Proof of Corollary \ref{CT}]
By Theorem \ref{hypemb-DGO} (or \cite[Theorem 1.2]{M-O-Y4}), 
$G$ admits a finitely generated and virtually non-abelian free subgroup $L(0)$ 
which is hyperbolically embedded into $G$ relative to $\H(X)$. 
We take a sequence $\{L(n)\}_{n\in\N}$ of subgroups of $L(0)$ described in Lemma \ref{F_2} 
and for each $n\in\N$, we put 
\begin{eqnarray*}
\K_n=\{P\subset L(n-1) ~|~ P=lL(n)l^{-1}\text{ for some }l\in L(n-1)\}.
\end{eqnarray*}
For each $n\in\N$, $\K_n$ is a proper relatively hyperbolic structure on $L(n-1)$ 
and hence there exists a finitely generated and virtually non-abelian free subgroup 
$H(n,n)$ of $L(n-1)$ which is hyperbolically embedded into $L(n-1)$ relative to $\K_n$ 
by Theorem \ref{hypemb-DGO} (or \cite[Theorem 1.2]{M-O-Y4}). 
We take a sequence $\{H(n,n+m)\}_{m\in\N}$ of subgroups of $H(n,n)$ described 
in Lemma \ref{F_2} for each $n\in\N$, that is, 
$\{H(n,n+m)\}_{m\in\N}$ satisfies the following:
\begin{itemize}
\item $H(n,n+m)$ is isomorphic to a 
finitely generated and virtually non-abelian free group for any $m\in \N$; 
\item $H(n,n+m)$ is an almost malnormal and quasiconvex subgroup of $H(n,n+m-1)$ 
for every $m\in \N$;
\item $\bigcap_{m\in \N}H(n,n+m)$ is a finite subgroup of $H(n,n)$.
\end{itemize}

For each $\lambda\in\{0,1\}^\N$ and each $n\in\N$, we put 
\begin{eqnarray*}
I(\lambda,n)=\{i\in\N ~|~ i\le n\text{ and }\lambda(i)=1\}
\end{eqnarray*}
and 
\begin{eqnarray*}
\H^{(n)}_\lambda=\H(X) &\cup& 
\bigcup_{i\in I(\lambda,n)}\{P\subset G ~|~ P=gH(i,n)g^{-1}\text{ for some }g\in G\} \\
&\cup& \{P\subset G ~|~ P=gL(n)g^{-1}\text{ for some }g\in G\}. 
\end{eqnarray*}
We can confirm that $\H^{(n)}_\lambda$ is a proper relatively hyperbolic structure on $G$
by using \cite[Theorem 1.5]{Osi06b}. 
Also we have $\H^{(n)}_\lambda\to\H^{(m)}_\lambda$ if $n \ge m$.
We take a compact metrizable space $X^{(n)}_\lambda$ endowed with 
a geometrically finite convergence action of $G$ such that $\H(X^{(n)}_\lambda)=\H^{(n)}_\lambda$.
By Theorem \ref{geomorder}, $X^{(n)}_\lambda$ is a blow-up of $X^{(m)}_\lambda$ 
if $n \ge m$. We put $X_\lambda=\underleftarrow{\lim}_{n\in \N}X^{(n)}_\lambda$. 

Now we prove the assertions (1) and (2). 
Since we have $\H(X)\to\H(X^{(n)}_\lambda)$ for each $n\in\N$, 
it follows from Theorem \ref{geomorder} that $X^{(n)}_\lambda$ is a blow-down of $X$.  
Hence the induced action of $G$ on $X_\lambda$ is 
a minimal non-elementary convergence action and 
$X_\lambda$ is a blow-down of $X$ such that 
$\H(X_\lambda)=\bigwedge_{n\in\N}\H^{(n)}_\lambda$
by Lemma \ref{diag} and Lemma \ref{inverse}. 
Moreover we have $\H(X_\lambda)=\H(X)$ by the condition (3) in Lemma \ref{F_2}. 

Next we prove the assertion (3). 
Take any $\lambda, \lambda'\in\{0,1\}^\N$. 
If $\lambda^{-1}(\{1\})\supset \lambda'^{-1}(\{1\})$, 
then we can confirm that $X_\lambda$ is a blow-down of $X_{\lambda'}$
in a similar way to the above argument. 
Now we suppose that $\lambda(n)=0$ and $\lambda'(n)=1$ for some $n$. 
Since $H(n,n)$ is strongly quasiconvex relative to $\H_\lambda^{(n)}$ in $G$, 
the limit set $\Lambda(H(n,n),X_{\lambda}^{(n)})$ endowed with the induced action of $H(n,n)$  
consists of conical limit points (see \cite[Theorem 1.2]{Hru10}). 
Since $\Lambda(H(n,n),X_{\lambda})$ is a blow-up of $\Lambda(H(n,n),X_{\lambda}^{(n)})$, 
it follows from Lemma \ref{equivcont} (3)
that the limit set $\Lambda(H(n,n),X_{\lambda})$ endowed with the induced action of $H(n,n)$ 
also consists of conical limit points. 
On the other hand $\Lambda(H(n,n),X_{\lambda'})$ endowed with the induced action of $H(n,n)$
has a point which is not conical. 
Indeed we take the parabolic point $p_m\in X_{\lambda'}^{(n+m)}$ of $H(n,n+m)$ 
for each $m\in \N\cup\{0\}$. 
Then $\{p_m\}_{m\in \N\cup \{0\}}$ defines a unique point $p_{\lambda'}$ of $X_{\lambda'}$.
Since any $p_m$ is not a conical limit point of $X_{\lambda'}^{(n+m)}$
with respect to the induced action of $H(n,n)$, 
it follows from Lemma \ref{inverse} that 
$p_{\lambda'}$ is not a conical limit point of $X_{\lambda'}$
with respect to the induced action of $H(n,n)$.
Also since $p_0$ is a bounded parabolic point of $X_{\lambda'}^{(n)}$ and 
its maximal parabolic subgroup is $H(n,n)$, 
it follows from Lemma \ref{equivcont} (4)
that $p_{\lambda'}$ belongs to $\Lambda(H(n,n),X_{\lambda'})$. 
Assume that $X_{\lambda}$ is a blow-down of $X_{\lambda'}$. 
Then $\Lambda(H(n,n),X_{\lambda})$ is also a blow-down of $\Lambda(H(n,n),X_{\lambda'})$.
In view of Lemma \ref{equivcont} (3), 
this contradicts the facts that  
$\Lambda(H(n,n),X_{\lambda})$ consists of conical limit points and 
that $\Lambda(H(n,n),X_{\lambda'})$ has a point which is not conical.

The assertion (4) follows from the assertion (3).

Since $\H(X)$ is a proper relatively hyperbolic structure on $G$, 
the assertion (5) follows from the assertion (1) and (4). 
\end{proof}

\begin{Rem}\label{CT'}
We consider a characteristic function $\chi_{\{1,\ldots, n\}}\in \{0,1\}^\N$ 
for each $n\in \N$. 
We define $X_{n}$ by $X_{\chi_{\{1,\ldots, n\}}}$ for each $n\in \N$ on the above proof, 
and put $X_0=X$. 
Then $X_n$ is a blow-down of $X_m$ for $n,m\in \N\cup\{0\}$ such that $n\ge m$. 
We can easily confirm that $X_n$ for each $n\in \N\cup\{0\}$ has
exactly $n$ $G$-orbits of points which are neither conical nor parabolic. 
Since any conjugacy between minimal non-elementary convergence actions of $G$ 
preserves conical limit points and parabolic points, 
$X_n$ and $X_m$ for any different $n, m\in \N\cup \{0\}$ are not conjugate. 
\end{Rem}

\begin{Rem}\label{CT''}
Suppose that $G$ is finitely generated and $G$ has a compact metrizable space 
$X$ endowed with a geometrically finite convergence action. 
Since $\Out(G)$ is countable, 
it follows from Corollary \ref{CT} that the set of all conjugacy classes
of minimal non-elementary convergence actions of $G$ whose peripheral
structures are equal to $\H(X)$ is uncountable.
\end{Rem}

\begin{proof}[Proof of Corollary \ref{CT'''}]
This is proved in a similar way in Proof of Corollary \ref{CT}, 
where $\{H(n,n+m)\}_{m\in \N}$ is replaced by 
$\{H(n,n)\}_{m\in \N}$ for each $m,n\in\N$.
We omit details.
\end{proof}

\subsection{Common blow-ups of geometrically finite convergence actions}
Let $\{X_i\}_{i\in I}$ be a family of 
compact metrizable spaces endowed with minimal non-elementary convergence actions
of $G$, where $I$ is a non-empty countable index set.
If the induced action of $G$ on $\bigwedge_{i\in I} X_i$ is a convergence action, 
then $\bigwedge_{i\in I} X_i$ is the smallest common blow-up of all $X_i$, that is, 
$\bigwedge_{i\in I} X_i$ is a common blow-up of all $X_i$ and 
any common blow-up of all $X_i$ 
is a blow-up of $\bigwedge_{i\in I} X_i$ 
(see Lemma \ref{diag}). 
Unfortunately the action of $G$ on $\bigwedge_{i\in I} X_i$ is 
not necessarily a convergence action even if $I$ is finite. 
Indeed a counterexample is given 
by \cite[Theorem 1]{B-R} about 
Cannon-Thurston maps (refer to \cite[Corollary 1.3]{M-O}).
On the other hand, we have the following: 
\begin{Cor}\label{common''}
Let $G$ be a finitely generated group. 
Let $\{X_i\}_{i\in I}$ be a family of 
compact metrizable spaces endowed with geometrically finite convergence actions
of $G$, where $I$ is a non-empty countable index set.
Then the action of $G$ on $\bigwedge_{i\in I} X_i$ is
a minimal non-elementary convergence action of $G$
such that $\H(\bigwedge_{i\in I} X_i)=\bigwedge_{i\in I} \H(X_i)$. 
Here a subgroup of $G$ belongs to $\bigwedge_{i\in I} \H(X_i)$ if and only if 
it is infinite and is an intersection of some $H_i\in \H(X_i)$ for each $i\in I$. 

Moreover if $I$ is finite, then the action of $G$ on $\bigwedge_{i\in I} X_i$ is 
geometrically finite.
\end{Cor}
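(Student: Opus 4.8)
The plan is to reduce the entire statement to the case of a \emph{finite} index set $I$, and to extract geometric finiteness from that case. The two engines are Theorem~\ref{geomorder} (which turns a comparison $\H(Z)\to\H(X_i)$ of peripheral structures into an equivariant map $Z\to X_i$) and Lemma~\ref{diag} (which, as soon as a single common blow-up $Z$ carrying a minimal non-elementary convergence action is available, exhibits $\bigwedge_{i\in I}X_i$ as the image of $\prod_i\pi_i\colon Z\to\prod_{i\in I}X_i$, shows the induced action is a minimal non-elementary convergence action, and computes $\H(\bigwedge_{i\in I}X_i)=\bigwedge_{i\in I}\H(X_i)$). Thus the whole first assertion follows the moment I can exhibit one such common blow-up $Z$; the description of the members of $\bigwedge_{i\in I}\H(X_i)$ is then immediate from its definition.

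For the finite case the key input I would establish is that, since $G$ is finitely generated and each $\H(X_i)$ is a proper relatively hyperbolic structure, the meet $\bigwedge_{i\in I}\H(X_i)$ is again a proper relatively hyperbolic structure on $G$ (the finite-intersection statement for relatively hyperbolic structures, provable with the augmented-space machinery of Section~\ref{aug} or by the methods of \cite{M-O-Y3}). Granting this, Theorem~\ref{s} produces a geometrically finite $Z$ with $\H(Z)=\bigwedge_{i\in I}\H(X_i)$; since $\H(Z)\to\H(X_i)$, Theorem~\ref{geomorder} gives equivariant continuous surjections $\pi_i\colon Z\to X_i$, so $Z$ is the desired common blow-up and Lemma~\ref{diag} applies, giving in particular $\H(Z)=\H(\bigwedge_{i\in I}X_i)$.

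It then remains to show that for finite $I$ the map $\Pi:=\prod_i\pi_i\colon Z\to\bigwedge_{i\in I}X_i$ is a homeomorphism, equivalently that $\bigwedge_{i\in I}X_i$ is geometrically finite. I would analyse the fibres of $\Pi$. Over a conical limit point of $\bigwedge_{i\in I}X_i$ the fibre is a single point by Lemma~\ref{equivcont}(3); over a bounded parabolic point $p$ with maximal parabolic subgroup $P\in\H(\bigwedge_{i\in I}X_i)=\H(Z)$, Lemma~\ref{equivcont}(4) gives $\Pi^{-1}(p)=\Lambda(P,Z)$, a single point because $P$ is a maximal parabolic subgroup of the geometrically finite space $Z$. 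So injectivity of $\Pi$ — hence the homeomorphism and geometric finiteness — is equivalent to the absence of points that are neither conical nor bounded parabolic. Writing a point as $p=\{p_i\}_i$, one checks by projecting a convergence sequence realising conicality of $p$ to each factor that $p$ is conical exactly when some $p_i$ is conical in $X_i$. \textbf{The main obstacle is the remaining case}, where every $p_i$ is a bounded parabolic point of $X_i$ with maximal parabolic $H_i$: here I must show that $\bigcap_i H_i$ is infinite — so that $p$ is genuinely parabolic with $\Stab(p)=\bigcap_i H_i\in\bigwedge_{i\in I}\H(X_i)$ — and that this intersection acts cocompactly on the complement of $p$. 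This is exactly where finiteness of $I$ (together with finite generation) is indispensable: simultaneous convergence into finitely many horoballs over $H_1,\dots,H_n$ forces the approaching group elements to remain relatively bounded from $\bigcap_i H_i$, which both rules out a finite intersection and yields the required cocompactness. It is the same geometric content as the proper relative hyperbolicity of the meet, and I would carry it out in the augmented space of $(G,\bigwedge_i\H(X_i))$.

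Finally, for general countable $I$ I would write $I=\bigcup_n J_n$ as an increasing union of finite subsets and set $Z_n:=\bigwedge_{i\in J_n}X_i$, which is geometrically finite by the finite case with $\H(Z_n)=\bigwedge_{i\in J_n}\H(X_i)$. Since $\H(Z_{n+1})\to\H(Z_n)$, Theorem~\ref{geomorder} and Lemma~\ref{equivcont}(1) give equivariant continuous surjections $Z_{n+1}\to Z_n$, forming a projective system. By Lemma~\ref{inverse} the limit $\underleftarrow{\lim}_n Z_n$ carries a minimal non-elementary convergence action and is equivariantly homeomorphic to $\bigwedge_n Z_n$; since each $Z_n$ maps onto every $X_i$ with $i\in J_n$, this limit is a common blow-up of all the $X_i$, so Lemma~\ref{diag} identifies it with $\bigwedge_{i\in I}X_i$ and yields $\H(\bigwedge_{i\in I}X_i)=\bigwedge_n\H(Z_n)=\bigwedge_{i\in I}\H(X_i)$, completing the first assertion.
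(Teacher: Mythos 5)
Your overall architecture coincides with the paper's: for finite $I$ you invoke the fact that $\bigwedge_{i\in I}\H(X_i)$ is again a proper relatively hyperbolic structure (the paper cites \cite[Proposition 7.1]{M-O-Y3}), produce $Z$ with $\H(Z)=\bigwedge_{i\in I}\H(X_i)$, map it to each $X_i$ by Theorem \ref{geomorder}, and feed this into Lemma \ref{diag}; for countable $I$ you pass to a projective limit over finite subsets and use Lemmas \ref{inverse} and \ref{diag}, exactly as the paper does. The description of the members of $\bigwedge_{i\in I}\H(X_i)$ and the reduction of geometric finiteness to showing $\Pi=\prod_i\pi_i:Z\to\bigwedge_{i\in I}X_i$ is a bijection are also the paper's steps.

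The gap is precisely the step you flag as ``the main obstacle.'' You propose to treat a point $p=\{p_i\}$ with every $p_i$ bounded parabolic by proving directly that $\bigcap_i H_i$ is infinite and acts cocompactly on the complement of $p$ in $\bigwedge_{i\in I}X_i$, but you only gesture at a horoball argument; establishing that cocompactness from scratch is essentially the combination theorem for relatively quasiconvex subgroups and is not easier than the statement you are proving. Moreover, your route needs $p$ to already be a \emph{bounded} parabolic point of $\bigwedge_{i\in I}X_i$ before Lemma \ref{equivcont}(4) can be applied to it, which is circular at that stage. The paper sidesteps all of this with Lemma \ref{f}: each $H_i\in\H(X_i)$ is fully quasiconvex with respect to $Z$ (its limit sets in $Z$ are the fibres $\pi_i^{-1}$ of distinct parabolic points, hence pairwise disjoint under translation), so \cite[Proposition 1.10]{Dah03} gives $\bigcap_i\Lambda(H_i,Z)=\Lambda\left(\bigcap_i H_i,Z\right)$. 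The fibre $\Pi^{-1}(p)=\bigcap_i\pi_i^{-1}(p_i)$ is then $\Lambda\left(\bigcap_i H_i,Z\right)$; it is nonempty because $p$ lies in the image of $\Pi$, which forces $\bigcap_i H_i$ to be infinite, hence an element of $\H(Z)$, hence with a one-point limit set. Thus every fibre of $\Pi$ is a singleton, $\Pi$ is a continuous bijection from a compactum to a Hausdorff space and therefore a $G$-equivariant homeomorphism, and geometric finiteness is inherited from $Z$ without ever having to verify bounded parabolicity in $\bigwedge_{i\in I}X_i$ directly. To complete your proof you should either import Lemma \ref{f} (Dahmani's result) at this point, or supply a genuine proof of the cocompactness claim; as written, the finite case is not closed.
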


\begin{Rem}\label{common'''}
If $I$ is infinite, 
then the convergence action of $G$ on $\bigwedge_{i\in I} X_i$
is not necessarily geometrically finite 
(see Remark \ref{Dun} and also Proof of Corollary \ref{CT}). 
\end{Rem}

\begin{Rem}\label{countable}
When $G$ is finitely generated, there exist at most 
countably infinitely many proper relatively hyperbolic structures on $G$ 
(see \cite[Proposition 6.4]{M-O-Y3}).
Hence there exist at most 
countably infinitely many $G$-equivariant homeomorphism classes of 
geometrically finite convergence actions of $G$. 
Thus countability of $I$ is not an essential assumption. 
Also if a countable group $G$ admits a geometrically finite convergence action
on a compact metrizable space, 
$G$ has infinitely many geometrically finite convergence actions
on compact metrizable spaces (see for example \cite[Theorem 6.3]{M-O-Y3}).
\end{Rem}

We need the following, which is a corollary of \cite[Proposition 1.10]{Dah03} 
(see also \cite[Theorem 1.1]{Yan12}): 
\begin{Lem}\label{f}
Let $G$ be a finitely generated group. 
Let $\{X_i\}_{i\in I}$ be a finite family of 
compact metrizable spaces endowed with geometrically finite convergence actions of $G$.
Let $Z$ be a compact metrizable space endowed with a geometrically finite convergence action of $G$
and $\pi_i:Z\to X_i$ be a $G$-equivariant continuous map for every $i\in I$. 
When we take $H_i\in \H(X_i)$ for every $i\in I$, we have 
$\bigcap_{i\in I} \Lambda (H_i,Z)=\Lambda (\bigcap_{i\in I} H_i,Z)$.
\end{Lem}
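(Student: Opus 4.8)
The inclusion $\Lambda(\bigcap_{i\in I}H_i,Z)\subset\bigcap_{i\in I}\Lambda(H_i,Z)$ is immediate and requires no hypotheses: writing $K:=\bigcap_{i\in I}H_i$, we have $K\subset H_i$ for every $i$, and the limit set is monotone under inclusion of subgroups, so $\Lambda(K,Z)\subset\Lambda(H_i,Z)$ for each $i$. The content of the lemma is thus the reverse inclusion $\bigcap_{i\in I}\Lambda(H_i,Z)\subset\Lambda(K,Z)$, and this is what I would extract from \cite[Proposition 1.10]{Dah03}. The first step is to reformulate the left-hand side geometrically. Each $H_i$ is a maximal parabolic subgroup with respect to $X_i$, so it fixes a bounded parabolic point $a_i\in X_i$; by Lemma \ref{equivcont}(4) the equivariant map $\pi_i$ satisfies $\pi_i^{-1}(a_i)=\Lambda(H_i,Z)$, whence $\bigcap_{i\in I}\Lambda(H_i,Z)=\bigcap_{i\in I}\pi_i^{-1}(a_i)$ is exactly the set of points of $Z$ that are simultaneously carried to the prescribed parabolic points $a_i$.

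It is instructive to see why the reverse inclusion is not formal. Take $z\in\bigcap_{i\in I}\Lambda(H_i,Z)$ and use that the action of $G$ on $Z$ is geometrically finite, so $z$ is either bounded parabolic or conical. If $z$ is a bounded parabolic point of $(G,Z)$ with maximal parabolic subgroup $P\in\H(Z)$, then equivariance of $\pi_i$ forces $P$ to fix $\pi_i(z)=a_i$, and since $P$ is infinite without loxodromics it lies in the stabilizer $H_i$ of $a_i$; as this holds for every $i$ we get $P\subset K$, and then $z\in\Lambda(P,Z)\subset\Lambda(K,Z)$. The genuine difficulty is the conical case: when $z$ is a conical limit point of $(G,Z)$, membership $z\in\Lambda(H_i,Z)$ produces sequences inside each $H_i$ separately but no single sequence inside the intersection $K$, and manufacturing such a sequence is a matter of geometry rather than set theory.

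To supply that geometry I would verify that each $H_i$ is a relatively quasiconvex, hence geometrically finite, subgroup of the relatively hyperbolic group $(G,\H(Z))$. Since $\H(Z)\to\H(X_i)$ by Lemma \ref{equivcont}(5), the peripheral subgroups of $Z$ contained in $H_i$ endow $H_i$ with its own relative structure, in the manner of the subgroup augmented spaces $X(H,\{H\},d_H)\subset X(G,\mathbb H,d_G)$ recalled in Section \ref{aug}; under this structure $H_i$ acts geometrically finitely on $\Lambda(H_i,Z)$, and this is the place where finite generation of $G$ is genuinely used. With the $H_i$ of this type, \cite[Proposition 1.10]{Dah03} gives the intersection formula for the limit sets of geometrically finite subgroups, yielding $\bigcap_{i\in I}\Lambda(H_i,Z)=\Lambda(K,Z)$ for the finite family directly; alternatively one reduces to $|I|=2$ by induction, using that the intersection of two such subgroups is again of the same type.

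The point at which the hypotheses of Dahmani's proposition must be checked most carefully—and what I expect to be the main obstacle—is the degenerate case in which $K=\bigcap_{i\in I}H_i$ is finite. Then $\Lambda(K,Z)=\emptyset$, so the assertion reduces to $\bigcap_{i\in I}\pi_i^{-1}(a_i)=\emptyset$, i.e.\ that subgroups with finite intersection have disjoint limit sets in the common blow-up $Z$. This is precisely the nontrivial conclusion of the combination/intersection result and is invisible to the monotonicity of the first paragraph; controlling it, together with the conical case above, is exactly the work that \cite[Proposition 1.10]{Dah03} performs, so the whole proof hinges on matching our $H_i$ to the class of subgroups to which that proposition applies.
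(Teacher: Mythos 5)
Your overall strategy coincides with the paper's: both proofs reduce the statement to \cite[Proposition 1.10]{Dah03}, applied inductively over the finite index set, after checking that each $H_i$ lies in the class of subgroups that proposition covers. The paper likewise first records that the induced action of $H_i$ on $Z$ is elementary or geometrically finite (citing \cite[Theorem 1.1]{Yan11}), which is the content of your third paragraph.

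There is, however, one genuine gap. Dahmani's Proposition 1.10 is stated for \emph{fully} quasiconvex subgroups, and full quasiconvexity has two components: the induced action on the limit set is geometrically finite, \emph{and} distinct translates of the limit set cannot accumulate a common point (for $\{g_n\}$ in pairwise distinct left cosets, $\bigcap_n g_n\Lambda(H_i,Z)=\emptyset$). You verify (or at least sketch) only the first component and then invoke the proposition; the second component is exactly the hypothesis that rules out the pathologies you yourself flag in your last paragraph (merely relatively quasiconvex subgroups in general satisfy only $\Lambda(H_1\cap H_2)\subset \Lambda H_1\cap\Lambda H_2$ with a possible discrepancy at parabolic points, so the clean equality cannot be had for free). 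The paper closes this in two lines using the almost malnormality of $\H(X_i)$: for $g\notin H_i=N_G(H_i)$, the parabolic points $p$ of $H_i$ and $q$ of $gH_ig^{-1}$ in $X_i$ are distinct, so by Lemma \ref{equivcont}(4) one gets $\Lambda(H_i,Z)\cap g\Lambda(H_i,Z)=\pi_i^{-1}(p)\cap\pi_i^{-1}(q)=\emptyset$. Adding this observation — which uses the same identification $\Lambda(H_i,Z)=\pi_i^{-1}(a_i)$ you already set up in your second paragraph — completes your argument and makes it match the paper's.
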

\begin{proof}
We take $i\in I$ and $H_i\in \H(X_i)$. 
The induced action of $H_i$ on $Z$  
is elementary or geometrically finite (see for example \cite[Theorem 1.1]{Yan11}). 
Also when we take $g\notin H_i$ and the parabolic points $p\in X_i$ of $H_i$ 
and $q\in X_i$ of $gH_ig^{-1}$, we have $p\neq q$. 
Hence we have $\pi_i^{-1}(p)\cap \pi_i^{-1}(q)=\emptyset$ and thus 
$\Lambda(H_i,Z)\cap g\Lambda(H_i, Z)=\emptyset$. 
In particular $H_i$ is fully quasiconvex with respect to $Z$. 
When we use \cite[Proposition 1.10]{Dah03} inductively, we have the assertion. 
\end{proof}

\begin{proof}[Proof of Corollary \ref{common''}]
First we suppose that $I$ is a finite index set.
Since $I$ is a finite set, it follows from \cite[Proposition 7.1]{M-O-Y3} 
based on \cite[Theorem 1.8]{D-S05}
that $\bigwedge_{i\in I}\H(X_i)$ is a proper relatively hyperbolic structure. 
Hence we have a compact metrizable space $Z$ endowed 
with a geometrically finite convergence action such that $\H(Z)=\bigwedge_{i\in I}\H(X_i)$. 
It follows from Theorem \ref{geomorder} that we have 
a $G$-equivariant continuous map $\pi_i: Z\to X_i$ for every $i\in I$.  
The image of $\prod_{i\in I}\pi_i:Z\to \prod_{i\in I}X_i$ is $\bigwedge_{i\in I}X_i$ and    
the action of $G$ on $\bigwedge_{i\in I}X_i$ is a minimal non-elementary convergence action 
such that $\H(\bigwedge_{i\in I}X_i)=\bigwedge_{i\in I}\H(X_i)$ by Lemma \ref{diag}. 
We take $\{r_i\}_{i\in I}\in \bigwedge_{i\in I}X_i$. 
If $r_i\in X_i$ is conical for some $i\in I$, 
then $\{r_i\}_{i\in I}$ is conical and 
thus $(\prod_{i\in I}\pi_i)^{-1}(\{r_i\}_{i\in I})$ 
consists of one point by Lemma \ref{equivcont} (3). 
Suppose that $r_i\in X_i$ is parabolic 
and $H_i\in \H(X_i)$ is the maximal parabolic subgroup for every $i\in I$. 
Since $I$ is a finite set, 
we have 
$\bigcap_{i\in I}\pi_i^{-1}(r_i)= \bigcap_{i\in I} \Lambda (H_i,Z)=\Lambda (\bigcap_{i\in I} H_i,Z)$ 
by Lemma \ref{equivcont} (4) and Lemma \ref{f}.
If $\bigcap_{i\in I} H_i$ is finite, then $\Lambda (\bigcap_{i\in I} H_i,Z)=\emptyset$. 
This contradicts the fact that $\{r_i\}_{i\in I}$ belongs to 
the image of $\prod_{i\in I}\pi_i:Z\to \prod_{i\in I}X_i$.
If $\bigcap_{i\in I} H_i$ is infinite, then $\bigcap_{i\in I} H_i$ is parabolic and thus 
$\Lambda (\bigcap_{i\in I} H_i,Z)$ consists of one point. 
Hence $(\prod_{i\in I}\pi_i)^{-1}(\{r_i\}_{i\in I})$ consists of one point. 
Thus $Z\ni z\mapsto \{\pi_i(z)\}_{i\in I}\in \bigwedge_{i\in I} X_i$ 
is a $G$-equivariant homeomorphism. 

Next we suppose that $I$ is not necessarily finite. 
We consider a countable directed non-empty set $J$ consisting of 
finite subsets of $I$, where $j<j'$ for each $j,j'\in J$ is defined by $j\subset j'$.  
We define $X_j:=\bigwedge_{i\in j}X_i$ for each $j\in J$ 
and the natural $G$-equivariant surjection 
$\pi_{jj'}: X_{j'}\to X_j$ for each $j,j'\in J$ such that $j<j'$. 
Since the action of $G$ on $X_j$ is a geometrically finite convergence action
by the former argument, 
we have a projective system of geometrically finite convergence actions of $G$. 
Since $J$ is countable, $\underleftarrow{\lim}_{j\in J}X_j$ 
is naturally endowed with a minimal non-elementary convergence action of $G$
by Lemma \ref{inverse}. 
We have a $G$-equivariant continuous map 
$\pi_i:\underleftarrow{\lim}_{j\in J}X_j\to X_i$ for any $i\in I$. 
Hence it follows from Lemma \ref{diag} that the image of 
$\prod_{i\in I} \pi_i:\underleftarrow{\lim}_{j\in J}X_j\to \prod_{i\in I} X_i$
is $\bigwedge_{i\in I} X_i$ and 
the action of $G$ on $\bigwedge_{i\in I} X_i$ is a non-elementary convergence action
such that $\H(\bigwedge_{i\in I} X_i)=\bigwedge_{i\in I} \H(X_i)$. 
\end{proof}

\begin{Rem}\label{pf}
On the last part, we can prove that $\underleftarrow{\lim}_{j\in J}X_j$ and 
$\bigwedge_{i\in I}X_i$ are $G$-equivariant homeomorphic. Indeed we have 
the natural projection $\bigwedge_{i\in I}X_i\to X_j$ for any $j\in J$ 
and thus we have a $G$-equivariant continuous map $\bigwedge_{i\in I}X_i\to \underleftarrow{\lim}_{j\in J}X_j$. 
It follows from Lemma \ref{equivcont} (2) that $\underleftarrow{\lim}_{j\in J}X_j$ and 
$\bigwedge_{i\in I}X_i$ are $G$-equivariant homeomorphic.
\end{Rem}

\begin{Rem}
We can prove Corollary \ref{common''} by combining existence of Floyd maps
(\cite[Corollary of Section 1.5]{Ger10}), 
Lemmas \ref{diag} and \ref{inverse}, but we omit details. 
\end{Rem}

We call a geometrically finite convergence action of $G$ on a compact metrizable space $X$ 
the {\it universal geometrically finite convergence action}
if every compact metrizable space endowed with a geometrically finite convergence action of $G$ 
is a blow-down of $X$. 
When $G$ is finitely generated and has a geometrically finite convergence action, 
we obtain the smallest common blow-up of 
all compact metrizable spaces endowed with geometrically finite convergence actions 
by applying Corollary \ref{common''} to a family of 
representatives of $G$-equivariant homeomorphism classes of 
all compact metrizable spaces endowed with geometrically finite convergence actions. 
However the action is not necessarily geometrically finite (see Remark \ref{Dun}). 
On the other hand we recognize many examples of groups with the universal geometrically finite convergence actions. 
Indeed when a convergence action of $G$ on $X$ is geometrically finite, 
it is the universal geometrically finite convergence action if and only if 
$\H(X)$ is the universal relatively hyperbolic structure on $G$ in the sense of 
\cite[Definition 4.1]{M-O-Y3} by Theorem \ref{geomorder} (see also Lemma \ref{equivcont} (5))
and we know many examples of groups with the universal relatively hyperbolic structures 
(see \cite[Remark (II) in Section 7]{M-O-Y3}).
The following is equivalent to an open question 
(see \cite[Question 1.5]{B-D-M09} and also \cite[Remark (IV) in Section 7]{M-O-Y3}):
\begin{Quest}\label{Q} 
Does every finitely presented group with a geometrically finite convergence action 
have the universal geometrically finite convergence action?
Does every torsion-free finitely generated group with a geometrically finite convergence action  
have the universal geometrically finite convergence action?
\end{Quest}

\begin{Rem}\label{Dun}
We consider the smallest common blow-up of all compact metrizable spaces endowed with 
geometrically finite convergence actions of 
the so-called Dunwoody's inaccessible group $J$. 
It follows from \cite[Section 6]{B-D-M09} that $J$
does not have the universal relatively hyperbolic structure 
(see also \cite[Remark (IV) in Section 7]{M-O-Y3}). 
In particular the action is not geometrically finite. 
We note that $J$ is finitely generated but neither finitely presented nor torsion-free.
\end{Rem}

On the other hand the following claims that 
there exists a family of 
compact metrizable spaces endowed with geometrically finite convergence actions
which admits no common blow-downs: 
\begin{Cor}\label{direct}
Let $G$ be a countable group and 
$X_0$ be a compact metrizable space 
endowed with a geometrically finite convergence action of $G$. 
Then we have a sequence of 
compact metrizable spaces endowed with geometrically finite convergence actions $\{X_n\}_{n\in \N}$ 
and a sequence of $G$-equivariant continuous surjections $\{\pi_n:X_{n-1}\to X_n\}_{n\in \N}$
such that $\underrightarrow{\lim}_{n\in \N}X_n$ is not Hausdorff
and there do not exist any common blow-downs of all $X_n$. 
\end{Cor}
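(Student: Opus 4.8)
The plan is to realize the sequence by repeatedly enlarging a single parabolic class, arranged so that the limiting parabolic subgroup is normal. Concretely, I would first produce an increasing chain of subgroups $L_1\subset L_2\subset\cdots$ of $G$ with three features: each $L_n$ is infinite and the family $\H(X_0)\cup\{\,gL_ng^{-1}\mid g\in G\,\}$ is almost malnormal, so that by the combination results of Osin and Theorem \ref{s} it is a proper relatively hyperbolic structure realized by a geometrically finite action on a compact metrizable space $X_n$; the limit sets grow strictly, $\Lambda(L_n,X_0)\subsetneq\Lambda(L_{n+1},X_0)$; and the union $L_\infty:=\bigcup_nL_n$ is a normal, infinite, not virtually cyclic subgroup of $G$. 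Keeping the parabolic classes of the given $X_0$ and only adjoining the new class $[L_n]$, the inclusion $L_{n-1}\subset L_n$ gives $\H(X_{n-1})\to\H(X_n)$, so Theorem \ref{geomorder} yields $G$-equivariant continuous surjections $\pi_n\colon X_{n-1}\to X_n$, as required.

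For the non-existence of a common blow-down I would argue as follows. Suppose $W$ is a compact metrizable space with a minimal non-elementary convergence action receiving $G$-equivariant continuous maps $\psi_n\colon X_n\to W$; by Lemma \ref{equivcont}(2) these are unique, hence compatible ($\psi_{n-1}=\psi_n\circ\pi_n$). Letting $p_n\in X_n$ be the parabolic point of $L_n$, the relation $\pi_n(p_{n-1})=p_n$ forces $\psi_{n-1}(p_{n-1})=\psi_n(p_n)$, so all these images equal a single point $w\in W$. Equivariance makes $w$ fixed by every $L_n$, hence $\Stab_W(w)\supseteq L_\infty$, an infinite, not virtually cyclic group. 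A subgroup of a convergence group that fixes a point and is neither finite nor virtually cyclic must be parabolic: a non-elementary subgroup cannot fix a point of its limit set, and a loxodromic-type stabilizer is virtually cyclic. Thus $w$ is a parabolic point and $P:=\Stab_W(w)$ is a maximal parabolic subgroup, so $P$ is proper, equals its normalizer, and (since $P\cap gPg^{-1}$ fixes two points and contains no loxodromic, hence is finite for $g\notin P$) its conjugates are almost malnormal. Now normality gives, for every $g\in G$, $L_\infty=gL_\infty g^{-1}\subseteq gPg^{-1}\cap P$; as $L_\infty$ is infinite, almost malnormality forces $gPg^{-1}=P$, i.e. $g\in N_G(P)=P$. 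Hence $P=G$, contradicting that $P$ is proper, so no common blow-down exists.

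For non-Hausdorffness of $\underrightarrow{\lim}_{n}X_n$ I would exhibit two distinct topologically indistinguishable points. The compatible thread $(p_n)_n$ determines a point $[p]$. Since $\Lambda(L_\infty,X_0)=\overline{\bigcup_n\Lambda(L_n,X_0)}$ is perfect while $\bigcup_n\Lambda(L_n,X_0)$ is a countable increasing union of closed proper subsets, Baire's theorem provides $q\in\Lambda(L_\infty,X_0)\setminus\bigcup_n\Lambda(L_n,X_0)$, and after discarding the countably many $G$-orbits of bounded parabolic points of $X_0$ we may take $q$ conical. Its image $q_n\in X_n$ satisfies $q_n\neq p_n$ (because $q\notin\Lambda(L_n,X_0)$, which by Lemma \ref{equivcont}(4) is the fibre of $X_0\to X_n$ over $p_n$), so $[q]\neq[p]$. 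On the other hand $q=\lim_k y_k$ in $X_0$ for some $y_k\in\Lambda(L_{n_k},X_0)$ with $n_k\to\infty$, and each $y_k$ maps to $p_{n_k}$ in $X_{n_k}$, so every $y_k$ represents the single class $[p]$. Continuity of the canonical map $\iota_0\colon X_0\to\underrightarrow{\lim}_nX_n$ then gives $[q]=\iota_0(q)\in\overline{\iota_0(\{y_k\})}=\overline{\{[p]\}}$; thus every neighbourhood of $[q]$ contains $[p]$, the limit is not even $T_1$, and in particular it is not Hausdorff.

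The main obstacle is the very first step: building the chain $\{L_n\}$ so that every finite stage is almost malnormal (hence genuinely peripheral) while the union is normal, infinite and not virtually cyclic. These requirements pull in opposite directions, since almost malnormality makes each $L_n$ thin and self-normalizing whereas the union must be fat and normal; naive choices such as an ascending union of free factors inside a fixed hyperbolically embedded free subgroup, or a normal closure $\langle\langle L_1\rangle\rangle$, fail to keep the stages almost malnormal. I expect this to require the Dahmani--Guirardel--Osin and Osin machinery (Theorem \ref{hypemb-DGO} and \cite{Osi06a,Osi06b}, exactly as in Lemma \ref{F_2}) to produce, inside a suitable infinite normal subgroup of the non-elementary relatively hyperbolic group $G$, an exhausting chain of almost malnormal relatively quasiconvex subgroups; establishing almost malnormality at each stage and the strict growth $\Lambda(L_n,X_0)\subsetneq\Lambda(L_{n+1},X_0)$ is the technical heart, while the dynamical facts about convergence actions invoked above are standard (cf. \cite{Tuk94,Tuk98}).
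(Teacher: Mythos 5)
Your argument has a genuine gap, and it is exactly the one you flag yourself: the entire construction rests on an increasing chain $L_1\subset L_2\subset\cdots$ of subgroups such that each $\H(X_0)\cup\{gL_ng^{-1}\mid g\in G\}$ is a proper relatively hyperbolic structure while $L_\infty=\bigcup_nL_n$ is infinite, normal and not virtually cyclic, and you never produce it. This is not a routine verification to be deferred. The corollary must hold for \emph{every} countable $G$ with a geometrically finite convergence action, and the tools you point to (Theorem \ref{hypemb-DGO}, Lemma \ref{F_2}, the Osin and Dahmani--Guirardel--Osin machinery) all produce \emph{decreasing} chains of hyperbolically embedded subgroups; an increasing chain of almost malnormal, relatively quasiconvex subgroups whose union is normal is a qualitatively different object, and the tension you name --- each finite stage self-normalizing and almost malnormal, the union normal with full limit set --- is precisely where I would expect the construction to fail or at least to require a substantial new argument. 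Since the conditional parts of your proof (the deduction $P\supseteq L_\infty$ normal $\Rightarrow P=G$ for a putative common blow-down, and the Baire-category argument for failure of $T_1$ in the direct limit) are correct but entirely parasitic on this chain, the proof as written does not establish the statement.

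The paper's proof sidesteps all of this with a much lighter mechanism. Enumerate the loxodromic elements $\{l_k\mid k\in\N\}$ of $G$ and at stage $n$ adjoin to $\H(X_0)$ the conjugacy classes of the virtual normalizers $V_G(l_1),\dots,V_G(l_n)$; each $V_G(l_k)$ is virtually cyclic, so by \cite[Corollary 1.7]{Osi06b} it is either already peripheral or hyperbolically embedded relative to the previous structure, and each $\H_n$ is again a proper relatively hyperbolic structure realized by some $X_n$, with the surjections $\pi_n$ supplied by Theorem \ref{geomorder}. The point is that every element of $G$ which is loxodromic for $X_0$ becomes an element of a parabolic subgroup of $X_n$ for some finite $n$. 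A common blow-down $Y$ would carry a minimal non-elementary convergence action and hence admit a loxodromic element $l$, which would then be loxodromic for every $X_n$ (being a blow-up of $Y$), contradicting that $l$ is parabolic in some $X_n$; the same ``no loxodromic survives'' mechanism shows the direct limit cannot be Hausdorff, since a Hausdorff limit would again carry a non-elementary convergence action. No chain with a prescribed normal union is needed --- only single virtually cyclic subgroups are adjoined, one at a time. If you want to salvage your approach, the statement you must prove first is the existence of your chain $\{L_n\}$ for an arbitrary $G$ as in the hypothesis; absent that, I would recommend switching to the loxodromic-enumeration argument.
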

\begin{proof}
We consider the set $\{l_k \ |\ k\in \N\}$ of all loxodromic elements of $G$. 
We denote the virtual normalizer $V_G(l_k)$ of $l_k$ by $H_k$. 
Also we denote the conjugacy class represented by $H_k$ by $\H'_k$. 
When we put $\H_0:=\H(X)$, 
$\H_n:=\H(X)\cup \bigcup_{k=1}^n\H'_k$ is also a proper relatively hyperbolic structure
for every $n\in \N$.
In fact if $\H_{n-1}$ is a proper relatively hyperbolic structure, then 
$H_n$ belongs to $\H_{n-1}\setminus \H_0$ 
or is hyperbolically embedded into $G$ relative to $\H_{n-1}$ (\cite[Corollary 1.7]{Osi06b}). 
Hence $\H_n=\H_{n-1}\cup \H'_{n}$ is a proper relatively hyperbolic structure on $G$. 
When we take a compact metrizable space $X_n$ endowed 
with a geometrically finite convergence action such that $\H(X_n)=\H_n$ 
for every $n\in \N$, 
we have a unique $G$-equivariant surjection $\pi_{n}:X_{n-1}\to X_{n}$ for every $n\in \N$ 
by theorem \ref{geomorder} and Remark \ref{geomorder'}.
Then the directed limit $X_\infty:=\underrightarrow{\lim}_{n\in \N}X_n$ is not Hausdorff. 
Indeed assume that $X_\infty$ is Hausdorff. 
Then the action of $G$ on $X_\infty$ is a non-elementary convergence action. 
Hence we have a loxodromic element $l\in G$ with respect to $X_\infty$.
Then $l$ is also loxodromic with respect to $X_0$. 
However $l$ is an element of a parabolic subgroup with respect to $X_n$ for some $n$. 
In particular $l$ has exactly one fixed point in $X_\infty$. 
This contradicts the assumption that $l$ is loxodromic with respect to $X_\infty$. 

Assume that there exists a common blow-down $Y$ of all $X_n$. 
We have a loxodromic element $l\in G$ with respect to $Y$.
Since each $X_n$ is a blow-up of $Y$, 
$l$ is a loxodromic element with respect to each $X_n$. 
This contradicts the fact that $l$ is an element of a parabolic subgroup 
with respect to some $X_n$. 
\end{proof}

\appendix
\def\thesection{Appendix \Alph{section}}
\section{Groups with no non-elementary convergence actions}\label{no}
\def\thesection{\Alph{section}}

There exists a criterion for countable groups to have 
no proper relatively hyperbolic structures 
(see \cite[Theorem 1]{K-N04} together with \cite[Definition 1]{Bow12}, and also \cite[Theorem 2]{A-A-S07}). 
In this appendix we show a similar criterion (Corollary \ref{cri}) 
for countable groups to have no non-elementary convergence actions. 
It follows from the criterion that 
$\SL(n,\Z)\ (n \ge 3)$, the mapping class group of an orientable surface 
of genus $g$ with $p$ punctures, where $3g+p \ge 5$ and so on have 
no non-elementary convergence actions by the same argument in \cite[Section 8]{K-N04}.
We note that a countable group with no subgroups which are isomorphic to 
the free group of rank $2$ has no non-elementary convergence actions (see \cite[Theorem 2U]{Tuk94}).

Let $G$ be a group with a family $\{G_\lambda\}_{\lambda\in\Lambda}$ of subgroups such that 
$G$ is generated by $\bigcup_{\lambda\in\Lambda}G_\lambda$. 
We consider a property $(P)$ defined for groups. 
We define a {\it $(P)$-graph} $\Gamma(\{G_\lambda\}_{\lambda\in\Lambda}, (P))$ 
of $\{G_\lambda\}_{\lambda\in\Lambda}$ 
as follows: the set of vertices is $\{G_\lambda\}_{\lambda\in\Lambda}$ and 
the set of edges is 
$\{G_{\lambda,\mu} | \ G_{\lambda,\mu}\ \text{satisfies (P)}, \lambda,\mu\in\Lambda, \lambda\neq\mu\}$, 
where $G_{\lambda,\mu}$ is a subgroup of $G$ generated by $G_\lambda$ and $G_\mu$. 
An edge $G_{\lambda,\mu}$ has two ends $G_\lambda$ and $G_\mu$. 
Suppose that $S$ is a generating set of $G$. 
Then we consider a family $\{\langle s\rangle\}_{s\in S}$ of subgroups of $G$
and we denote $\Gamma(\{\langle s\rangle\}_{s\in S}, (P))$ by $\Gamma(S, (P))$. 
We note that $\Gamma(S, (P))$ has been studied (see \cite[Introduction]{K-N04}).

If a countable group $G$ is infinite and has no non-elementary convergence actions, 
then we say that $G$ satisfies $(l=1,2)$. 
\begin{Prop}\label{cri'}
Let $G$ be a countable group 
with a family $\{G_\lambda\}_{\lambda\in\Lambda}$ of infinite subgroups such that 
$G$ is generated by $\bigcup_{\lambda\in\Lambda}G_\lambda$. 
If the $(l=1,2)$-graph $\Gamma(\{G_\lambda\}_{\lambda\in\Lambda}, (l=1,2))$ is connected, then 
$G$ satisfies $(l=1,2)$. 
\end{Prop}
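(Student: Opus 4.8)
The plan is to argue by contradiction: I assume $G$ admits a non-elementary convergence action on some compact metrizable space $X$, so that $\#\Lambda(G,X)=\infty$, and I show that the connectivity hypothesis forces $\#\Lambda(G,X)\le 2$, which is absurd. (That $G$ is infinite is immediate, since it contains the infinite subgroup $G_\lambda$.) Throughout I use that the restriction of a convergence action to any subgroup is again a convergence action, so that each $\Lambda(G_\lambda,X)$ and each $\Lambda(G_{\lambda,\mu},X)$ is defined, together with the dichotomy $\#\Lambda\in\{0,1,2,\infty\}$ and the fact that $\#\Lambda\ge 1$ for any infinite subgroup.

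The first and main step is a comparison lemma along edges: if $A$ and $B$ are infinite subgroups with $\langle A,B\rangle$ acting elementarily on $X$, that is $\#\Lambda(\langle A,B\rangle,X)\le 2$, then $\Lambda(A,X)=\Lambda(B,X)$. I prove this in two cases. If $\#\Lambda(\langle A,B\rangle,X)=1$, then since $A$ and $B$ are infinite their limit sets are non-empty and contained in this single point, hence both equal it. If $\#\Lambda(\langle A,B\rangle,X)=2$, then $\langle A,B\rangle$ is virtually infinite cyclic; every infinite subgroup of a virtually infinite cyclic group has finite index, and a finite-index subgroup of a convergence group shares its limit set, so again $\Lambda(A,X)=\Lambda(B,X)$. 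I apply this to each edge $G_{\lambda,\mu}$ of the graph, which acts elementarily on $X$ precisely because it satisfies $(l=1,2)$, and conclude that adjacent vertices carry equal limit sets. As the graph is connected, all the sets $\Lambda(G_\lambda,X)$ coincide; call this common set $L$, so that $1\le \#L\le 2$.

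The second step propagates the conclusion from the generators to $G$. Since $\#\Lambda(G_\lambda,X)=\#L\le 2$, each $G_\lambda$ permutes the points of the finite set $L$, and because $G$ is generated by $\bigcup_\lambda G_\lambda$ the whole group $G$ preserves $L$ setwise. Let $G^+$ be the subgroup of index at most two that fixes every point of $L$. Fixing a point of $L$, the group $G^+$ can contain no loxodromic element avoiding that point (its powers would move the point toward a distinct attracting point), so every loxodromic of $G^+$ fixes it; and since any two loxodromics of a convergence group that share one fixed point share both, all loxodromics of $G^+$ have a common fixed pair. Hence $G^+$ contains no two loxodromics with disjoint fixed pairs and is therefore elementary, so $\#\Lambda(G^+,X)\le 2$. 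As $G^+$ has finite index in $G$ it has the same limit set, whence $\#\Lambda(G,X)\le 2$, the desired contradiction.

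The crux, and where I expect the genuine work, is the structure theory of elementary convergence groups underlying the two appeals above: that every infinite subgroup of an elementary convergence group inherits its full limit set, and that a convergence group with a finite invariant set, in particular one fixing a point, is elementary. Both are standard consequences of Tukia's classification and of the dichotomy for fixed-point pairs of loxodromics (\cite{Tuk94}). Once these are in hand, the connectivity hypothesis enters only through the elementary observation that equality of limit sets across each edge is transitive and so spreads over the whole connected $(l=1,2)$-graph. The degenerate case in which the index set $\Lambda$ is a single index, where the graph has no edges, carries no content for the intended applications, where the vertices are cyclic subgroups generated by elements of a generating set; I dispose of it under the standing understanding that the graph has at least one edge.
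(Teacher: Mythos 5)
Your proof is correct and follows essentially the same route as the paper's: both arguments propagate a common finite invariant set along the connected $(l=1,2)$-graph (using that each edge subgroup, satisfying $(l=1,2)$, must act elementarily on $X$) and then conclude that $G$ itself acts elementarily. The only difference is organizational --- you replace the paper's case division (some edge subgroup parabolic versus every edge subgroup virtually infinite cyclic with a two-point limit set) by a uniform comparison of limit sets across edges via the finite-index argument, and you close with the general fact that a convergence group preserving a set of at most two points is elementary; the Tukia-type inputs, and the implicit assumption that the graph has at least one edge, are the same in both proofs.
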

\begin{proof}
We consider a convergence action of $G$ on a compact metrizable space $X$. 

We assume that for some $\lambda$ and $\mu$ such that $\lambda\neq\mu$, 
$G_{\lambda,\mu}$ is a parabolic subgroup. 
We denote the parabolic point by $p\in X$.
Then $G_\lambda$ and $G_\mu$ are parabolic subgroups and fix $p$.
For each $\lambda'\in \Lambda$ such that $G_{\lambda,\lambda'}$ satisfies $(l=1,2)$, 
$G_{\lambda,\lambda'}$ cannot have loxodromic elements. 
Hence $G_{\lambda,\lambda'}$ fixes $p$ and thus $G_{\lambda,\lambda'}$ is parabolic.
Then $G_{\lambda'}$ is also parabolic and fixes $p$. 
Since $\Gamma(\{G_\alpha\}_{\alpha\in\Lambda}, (l=1,2))$ is connected, 
for any $\alpha\in \Lambda$, $G_\alpha$ fixes $p$.  
Thus $G$ fixes $p$ since $G$ is generated by $\bigcup_{\alpha\in\Lambda}G_\alpha$.
Since $G$ contains $G_{\lambda,\mu}$ and thus cannot contain any loxodromic elements, 
$G$ is parabolic. 

We assume that for any $\lambda$ and $\mu$ such that $\lambda\neq\mu$, 
$G_{\lambda,\mu}$ is not parabolic.  
We take $G_{\lambda,\mu}$ which is an edge. 
Then $\Lambda(G_{\lambda,\mu},X)$ is a set of exactly two points $r,a\in X$ and thus 
$G_{\lambda,\mu}$ is virtually infinite cyclic. 
Hence $G_{\lambda}$ is virtually infinite cyclic
and has a loxodromic element which fixes $r,a\in X$. 
For each $\lambda'\in \Lambda$ such that $G_{\lambda,\lambda'}$ is an edge, 
$G_{\lambda,\lambda'}$ have a loxodromic element. 
Hence $\Lambda(G_{\lambda,\lambda'},X)$ is a set of exactly two points $r, a\in X$.
For any $\alpha\in \Lambda$, 
$G_\alpha$ fixes $r, a$ since $\Gamma(\{G_\alpha\}_{\alpha\in\Lambda}, (l=1,2))$ is connected.
Thus $G$ fixes $r, a$ since $G$ is generated by $\bigcup_{\lambda\in\Lambda}G_\lambda$. 
Hence $\Lambda(G,X)$ is the set of exactly two points $r,a\in X$.
\end{proof}

We say that 
a group satisfies $(A)$ if it is abelian.
Since infinite abelian groups satisfy $(l=1,2)$, we have the following
(compare with \cite[Theorem 2]{A-A-S07}): 
\begin{Cor}\label{cri}
Let $G$ be a countable group 
with a generating set $S$ of $G$. 
Suppose that any element of $S$ is of infinite order.  
If the $(A)$-graph $\Gamma(S, (A))$ is connected, then 
$G$ satisfies $(l=1,2)$.
\end{Cor}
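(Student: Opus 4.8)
The plan is to reduce this to Proposition \ref{cri'} by applying that proposition to the family of cyclic subgroups $\{\langle s\rangle\}_{s\in S}$. First I would verify the hypotheses of Proposition \ref{cri'} for this family. Since every element of $S$ has infinite order, each $\langle s\rangle$ is an infinite subgroup (isomorphic to $\Z$), and since $S$ generates $G$, the union $\bigcup_{s\in S}\langle s\rangle$ generates $G$. Thus the only remaining thing to establish is that the associated $(l=1,2)$-graph $\Gamma(S,(l=1,2))=\Gamma(\{\langle s\rangle\}_{s\in S}, (l=1,2))$ is connected; once this is known, Proposition \ref{cri'} yields directly that $G$ satisfies $(l=1,2)$.

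The key step is to compare the two graphs on the common vertex set $\{\langle s\rangle\}_{s\in S}$, and to show that $\Gamma(S,(A))$ is a subgraph of $\Gamma(S,(l=1,2))$, i.e.\ that every $(A)$-edge is an $(l=1,2)$-edge. Suppose $\{\langle s\rangle,\langle t\rangle\}$ is an edge of $\Gamma(S,(A))$, so that $G_{s,t}:=\langle s,t\rangle$ is abelian. Because $s$ has infinite order, $G_{s,t}$ contains the infinite subgroup $\langle s\rangle$ and is therefore an infinite abelian group. An infinite abelian group has no subgroup isomorphic to the free group of rank $2$, so by \cite[Theorem 2U]{Tuk94} it admits no non-elementary convergence action; being infinite, it thus satisfies $(l=1,2)$. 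Hence $G_{s,t}$ satisfies $(l=1,2)$ and $\{\langle s\rangle,\langle t\rangle\}$ is an edge of $\Gamma(S,(l=1,2))$.

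Since the two graphs share the vertex set $\{\langle s\rangle\}_{s\in S}$ and every edge of $\Gamma(S,(A))$ is an edge of $\Gamma(S,(l=1,2))$, connectivity is monotone under this inclusion: any path in $\Gamma(S,(A))$ joining two vertices is also a path in $\Gamma(S,(l=1,2))$. Therefore, if $\Gamma(S,(A))$ is connected, so is $\Gamma(S,(l=1,2))$, and Proposition \ref{cri'} completes the argument. The whole proof is short once Proposition \ref{cri'} is available; there is no genuine obstacle, and the only point requiring any care is the implication that an infinite abelian group satisfies $(l=1,2)$, which is precisely where the hypothesis that every element of $S$ has infinite order (guaranteeing infiniteness of $G_{s,t}$) and the absence of rank-$2$ free subgroups in abelian groups are used.
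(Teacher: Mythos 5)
Your proof is correct and follows essentially the same route as the paper, which derives the corollary from Proposition \ref{cri'} via the single observation that infinite abelian groups satisfy $(l=1,2)$ (having no free subgroups of rank $2$, hence no non-elementary convergence actions by Tukia's Theorem 2U). Your write-up simply makes explicit the edge-by-edge comparison of $\Gamma(S,(A))$ with $\Gamma(S,(l=1,2))$ that the paper leaves implicit.
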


\def\thesection{Appendix \Alph{section}}
\section{Hyperbolically embedded virtually free subgroups
of relatively hyperbolic groups}\label{sect-DGO}
\def\thesection{\Alph{section}}

In this appendix, we give a remark on hyperbolically embedded subgroups
of relatively hyperbolic groups in the sense of \cite{Osi06b} as a
continuation of \cite{M-O-Y4}.
After the first version of \cite{M-O-Y4} appeared, the notion of a
hyperbolically embedded subgroup was further generalized in
\cite{D-G-O11}.
We can see that 
the argument in the proof of \cite[Theorem 6.14 (c)]{D-G-O11} 
gives an alternative proof of \cite[Theorem 1.2]{M-O-Y4}.
In fact we have a stronger version of \cite[Theorem 1.2]{M-O-Y4}.

\begin{Thm}\label{hypemb-DGO}
Suppose that a group $G$ is not virtually cyclic 
and is hyperbolic relative to a family $\bbk$ of proper subgroups
in the sense of \cite{Osi06a}. 
Then $G$ contains a maximal finite normal subgroup $K(G)$. 
Moreover for every positive integer $m$, 
there exists a subgroup $H$ of $G$ such that 
$H$ is the direct product of a free subgroup of rank $m$ 
and $K(G)$ and that $H$ is hyperbolically embedded into $G$ relative to $\bbk$.
\end{Thm}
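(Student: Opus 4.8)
The plan is to transfer the problem into the framework of groups acting on hyperbolic spaces developed in \cite{D-G-O11} and to run the argument of \cite[Theorem 6.14 (c)]{D-G-O11} in our setting. Since $G$ is hyperbolic relative to $\bbk$ in the sense of \cite{Osi06a} and is not virtually cyclic, Theorem \ref{s} provides a connected hyperbolic augmented space $X(G,\bbk,d_G)$ on which $G$ acts properly, together with a geometrically finite convergence action of $G$ on the boundary $\partial X(G,\bbk,d_G)$; this action is non-elementary of general type, its loxodromic elements are precisely those of $G$ not conjugate into any member of $\bbk$, and it is acylindrical (equivalently, the loxodromic elements are WPD), so the hypotheses of \cite{D-G-O11} are met. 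For the first assertion I would invoke the structure theory of such actions: a group admitting a non-elementary WPD action on a hyperbolic space has a unique maximal finite normal subgroup $K(G)$ (see \cite{D-G-O11}). Concretely $K(G)$ is the kernel of the action of $G$ on $\partial X(G,\bbk,d_G)$; it is finite because an infinite-order kernel element would be loxodromic or parabolic and so could not act as the identity on the infinite set $\partial X(G,\bbk,d_G)$, and its maximality among finite normal subgroups follows from the cited theory.

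For the second assertion the key is to produce the loxodromic elements carefully. Because $K(G)$ is finite and normal, its centralizer $C:=C_G(K(G))$ has finite index in $G$. Since the action is of general type, $G$ contains infinitely many pairwise independent loxodromic WPD elements; replacing each by a suitable power keeps it loxodromic with the same pair of fixed points and places it inside $C$. Thus I can select pairwise independent loxodromic elements $f_1,\dots,f_m \in C$. A ping-pong argument applied to sufficiently high powers then shows that $H_0:=\langle f_1^{n_1},\dots,f_m^{n_m}\rangle$ is free of rank $m$, and the argument of \cite[Theorem 6.14 (c)]{D-G-O11} shows that $H_0$ is hyperbolically embedded into $G$ relative to $\bbk$ in the sense of \cite{Osi06b}.

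It remains to incorporate $K(G)$. Since each $f_i$ lies in $C$, the subgroup $H_0$ commutes with $K(G)$; as $H_0$ is free (hence torsion-free) and $K(G)$ is finite, we have $H_0 \cap K(G)=\{e\}$, so $H:=H_0K(G)$ is the internal direct product $H_0 \times K(G)$, that is, the direct product of a free group of rank $m$ with $K(G)$. Finally, because $K(G)$ is finite, normal, and commutes with $H_0$, the subgroup $H$ is commensurable with the hyperbolically embedded subgroup $H_0$ through a finite commuting thickening, and hyperbolic embeddedness is preserved; this yields that $H$ is hyperbolically embedded into $G$ relative to $\bbk$, as required.

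The step I expect to be the main obstacle is the hyperbolic embeddedness of $H_0$, and its persistence for $H$. Verifying the criterion of \cite{D-G-O11} requires genuine control of how the quasi-axes of the independent loxodromics $f_i$ interact and of their position relative to the combinatorial horoballs coming from $\bbk$; this is exactly where the WPD/acylindricity of the action and the compatibility with the peripheral structure enter, and where one must check hyperbolic embeddedness relative to $\bbk$ rather than merely with respect to the ambient hyperbolic space. By contrast, the passage from $H_0$ to $H_0 \times K(G)$ is a routine thickening by a finite commuting normal subgroup, the only care being to respect the direct-product structure.
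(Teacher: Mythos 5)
There is a genuine gap in the middle of your argument, and it is not the step you flagged as the main obstacle. You first produce a free subgroup $H_0=\langle f_1^{n_1},\dots,f_m^{n_m}\rangle$ with all $f_i$ chosen in the centralizer $C_G(K(G))$, claim that the argument of \cite[Theorem 6.14 (c)]{D-G-O11} shows $H_0$ itself is hyperbolically embedded relative to $\bbk$, and only afterwards adjoin $K(G)$ by a ``finite commuting thickening.'' This order cannot work when $K(G)\neq\{e\}$: a hyperbolically embedded subgroup is almost malnormal (\cite[Proposition 2.8]{D-G-O11}; equivalently, via \cite[Theorem 1.5]{Osi06b} in the relatively hyperbolic setting), whereas every nontrivial $k\in K(G)$ centralizes $H_0$ by your choice of the $f_i$, so $k\notin H_0$ (as $H_0$ is torsion-free) and $H_0\cap kH_0k^{-1}=H_0$ is infinite. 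Hence $H_0$ is never hyperbolically embedded unless $K(G)$ is trivial, and the ``thickening preserves hyperbolic embeddedness'' principle you invoke is being applied to a premise that cannot hold; it is not a standard fact and, in the only situation where you would need it, its hypothesis is contradictory. What \cite[Theorem 6.14 (c)]{D-G-O11} actually yields — and what the paper's proof does — is to build $K(G)$ in from the start: one takes elements $g_1,\dots,g_{3m}$ whose \emph{maximal elementary} subgroups $E_i=\langle g_i\rangle\times K(G)$ are jointly hyperbolically embedded together with $\bbk$ (\cite[Corollaries 1.7 and 4.5]{Osi06b}, \cite[Lemma 3.8]{A-M-O07}), forms the products $x_j=g_{3j-2}^n g_{3j-1}^n g_{3j}^n$ for $n$ large with respect to the relative metrics $\hat d_i$ of \cite[Definition 4.2]{D-G-O11}, and proves hyperbolic embeddedness directly for $H=\langle x_1,\dots,x_m\rangle\times K(G)$, then removes the auxiliary hyperbolic peripherals $E_i$ using \cite[Theorem 2.40]{Osi06a}. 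The free group alone is never the hyperbolically embedded object.

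Two smaller points. First, your finiteness argument for the kernel of the action on $\partial X(G,\bbk,d_G)$ only shows the kernel is torsion; finiteness needs the convergence property (an infinite kernel would contain a convergence sequence, which cannot act trivially), and the paper simply quotes \cite[Lemma 3.3]{A-M-O07} for the existence of $K(G)$. Second, the action of $G$ on the augmented space is proper but not acylindrical (infinite-order peripheral elements act as parabolic isometries with unbounded orbits and zero translation length, which acylindricity forbids); the WPD property of the loxodromic elements is what is actually available and is all that \cite{D-G-O11} needs.
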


We write down the proof for reader's convenience.

\begin{proof}[Proof of Theorem \ref{hypemb-DGO}]
The former assertion follows from \cite[Lemma 3.3]{A-M-O07}. 

We prove the latter assertion. 
For the case where $m=1$, the assertion follows from \cite[Corollaries 1.7 and 4.5]{Osi06b} 
and \cite[Lemma 3.8]{A-M-O07}. 
We prove the assertion for the case where $m=2$. 
For the case where $m>2$, the assertion is proved by changing notation. 

Let $X$ be a finite generating set of $G$ relative to $\bbk$. 
We put $\calk=\bigcup_{K\in \bbk} K\setminus\{1\}$ and $Y=X\cup\calk$. 
By \cite[Corollaries 1.7 and 4.5]{Osi06b} and \cite[Lemma 3.8]{A-M-O07}, 
$G$ contains subgroups $E_1, \ldots, E_6$ of $G$ such that 
$G$ is hyperbolic relative to $\bbk\cup\{E_1, \ldots, E_6\}$ 
and for each $i\in\{1, \ldots, 6\}$, we have $E_i=\langle g_i\rangle \times K(G)$ 
for some element $g_i$ of $G$. 
We put $\cale=\bigcup_{i=1}^6 E_i\setminus\{1\}$. 

For each $i\in\{1, \ldots, 6\}$, we denote by $\hat{d}_i$ 
the relative metric on $E_i$ defined in \cite[Definition 4.2]{D-G-O11} 
by using the Cayley graph $\Gamma(G,Y\cup\cale)$. 
We take a positive integer $n$ such that for each $i\in\{1, \ldots, 6\}$, 
we have $\hat{d}_i(1,g_i^n)>50D$, 
where $D=D(1,0)$ is given by \cite[Proposition 4.14]{D-G-O11} 
applied to the Cayley graph $\Gamma(G,Y\cup\cale)$. 
We put $x=g_1^ng_2^ng_3^n$ and $y=g_4^ng_5^ng_6^n$. 
We denote by $H$ the subgroup of $G$ which is generated by $\{x,y\}$ and $K(G)$. 

Since for each $i\in\{1, \ldots, 6\}$, the subgroup $\langle g_i\rangle$ commutes with $K(G)$, 
we have $H=\langle x,y\rangle\times K(G)$. 
As mentioned in the proof of \cite[Theorem 6.14 (c)]{D-G-O11}, 
the subgroup $\langle x,y\rangle$ is a free group of rank two. 

Let $r=r_1\ldots r_k$ be a path in $\Gamma(G,Y\cup\cale)$ such that 
for each $j \in\{1, \ldots, k\}$, 
the label of the edge $r_j$ belongs to 
$\{(g_1^ng_2^ng_3^n)^{\pm 1}, (g_4^ng_5^ng_6^n)^{\pm 1}\}$. 
Here for each $i\in\{1, \ldots, 6\}$, we think of $g_i^n$ as a letter in $\cale$. 
By \cite[Lemma 4.20]{D-G-O11}, the path $r$ is a $(4,1)$-quasigeodesic. 
Now we consider the metrics on $\langle x,y\rangle$, $H$ and $G$ 
induced by the Cayley graphs $\Gamma(\langle x,y\rangle,\{x,y\})$, 
$\Gamma(H,\{x,y\}\cup K(G))$ and $\Gamma(G,Y\cup\cale)$, respectively.
Then the natural embedding from $\langle x,y\rangle$ into $G$ is quasi-isometric. 
Since $H$ contains $\langle x,y\rangle$ as a finite index subgroup,  
the natural embedding from $H$ into $G$ is also quasi-isometric.

By the proof of \cite[Theorem 6.14 (c)]{D-G-O11}, 
the subgroup $H$ is hyperbolically embedded in $G$ in the sense of \cite{D-G-O11}. 
Hence $H$ is almost malnormal in $G$ by \cite[Proposition 2.8]{D-G-O11}. 

Therefore $G$ is hyperbolic relative to $\bbk\cup\{E_1, \ldots, E_6\}\cup\{H\}$ 
by \cite[Theorem 1.5]{Osi06b}.
Since for each $i\in\{1, \ldots, 6\}$, the subgroup $E_i$ is a hyperbolic group, 
the group $G$ is hyperbolic relative to $\bbk\cup\{H\}$ by \cite[Theorem 2.40]{Osi06a}.
\end{proof}


\end{document}